\def\dd{\,\mathrm{d}}
\renewcommand{\epsilon}{\varepsilon}
\newcommand{\p}{\ensuremath{\partial}}
\definecolor{labelkey}{rgb}{0,0,1}
\newcommand{\R}{\mathbb{R}}
\newcommand{\Z}{\mathbb{Z}}
\newcommand{\al}{\alpha}
\newcommand{\RR}{\mathbb R}
\newcommand{\TT}{\mathbb T}
\newcommand{\DD}{\mathbb D}
\renewcommand*{\tilde}{\widetilde}
\renewcommand*{\bar}{\overline}
\newcommand{\Omab}{\Omega_a^b}
\newcommand{\ta}{\widetilde{a}}
\newcommand{\pd}{\partial}
\newcommand\D{_{\mathrm{D}}}
\newcommand\DN{_{\mathrm{DN}}}
\newcommand{\cX}{{\mathcal X}}
\newcommand{\cY}{{\mathcal Y}}
\newcommand{\T}{{\mathbb T}}
\newtheorem{theorem}{Theorem}[section]
\newtheorem{lemma}[theorem]{Lemma}
\newtheorem{proposition}[theorem]{Proposition}
\newtheorem{corollary}[theorem]{Corollary}
\theoremstyle{definition}
\newtheorem{definition}[theorem]{Definition}
\newtheorem{remark}[theorem]{Remark}
\numberwithin{equation}{section}
\def\p{\partial}
\def\f1r{{\frac{1}{r}}  }
\def\p{\partial}
\def\f1r{{\frac{1}{r}}  }
\title{\bf A contractible Schiffer counterexample on the half-sphere}
 \author{Gonzalo Cao-Labora and Antonio J. Fern\'andez}
\date{} %
\begin{document}

\maketitle
\begin{abstract} 
  We show the existence of a family of nontrivial smooth contractible domains $\Omega \subset \mathbb S^2$ that admit Neumann eigenfunctions of the Laplacian which are constant on the boundary. These domains are contained on the half-sphere $\mathbb S^2_+$, in stark contrast with the rigidity literature for Serrin-type problems. The proof relies on a local bifurcation argument around the family of geodesic disks centered at the north pole. We combine the use of anisotropic H\"older spaces for the functional setting with computer-assisted techniques to check the bifurcation conditions. 
\end{abstract}

\section{Introduction and main results}

In the last few years, there has been renewed interest in the so-called Schiffer conjecture. This conjecture, which can be traced back to the 1950s, was stated in the 1982 famous list of open problems by S.-T. Yau as follows:

\begin{itemize}  
\item[]\textbf{\hspace{-0.03cm}\cite[Problem 60]{Yau} } \textit{Let $\Omega$ be a smooth, compact bounded domain in $\mathbb{R}^2$. Suppose there exists an $f$ which is an eigenfunction for the Laplacian with Neumann boundary conditions. If also $f$ is constant on the boundary of $\Omega$, prove $\Omega$ is a disk.}
\end{itemize}

Although, after 75 years, the validity of the Schiffer conjecture remains unknown, in the case where $\Omega \subset \R^2$ is simply connected, some partial rigidity results are available. For example, it is known that the existence of an infinite sequence of orthogonal Neumann eigenfunctions that are constant on $\partial \Omega$ implies that $\Omega$ must be a disk (see \cite{B,BY}). Also, if the eigenvalue is among the seven lowest Neumann eigenvalues of the domain, then $\Omega$ must as well be a disk (see \cite{A, D}). See the references in \cite{FMW, EFRS} for more results in this direction. We stress that contractible counterexamples to the Schiffer conjecture are of paramount significance, since Williams \cite{Williams} showed that in that case, the Schiffer conjecture is equivalent to the Pompeiu problem \cite{Pompeiu}. The Pompeiu problem is a $100$-year-old open problem asking if the integrals of a continuous function $f$ over every rigid motion of $\Omega$ uniquely determine $f$.

Despite the aforementioned rigidity results for the Schiffer conjecture, in the recent papers \cite{FMW, EFRS}, the authors dealt with slightly relaxed versions of this conjecture and proved suitable counterexamples. 

First, Fall, Minlend and Weth \cite{FMW} proved that the conjecture cannot hold if one removes the boundedness assumption from $\Omega$. More precisely, they proved the existence of a parametric family of compact subdomains $\Omega$ of the strip $\R \times \T$ which admit Neumann eigenfunctions with constant Dirichlet values on $\pd \Omega$. Moreover, the boundaries of these domains have nonconstant principal curvatures. They actually dealt with the more general case of compact subdomains of the flat cylinder $\R^n \times \TT$, with $n \geq 1$. Note that here and in the rest of the paper we use the notation $\T := \R/2\pi\Z$. 

On the other hand, if one considers non-simply connected domains, it is natural to wonder what happens if one relaxes the hypotheses in the conjecture by allowing the eigenfunction to be locally constant on the boundary, that is, constant on each connected component of~$\partial\Omega$. Of course, the radial Neumann eigenfunctions of a ball or an annulus are locally constant on the boundary, so the natural question in this case is whether $\Omega$ must necessarily be a ball or an annulus. This question shares many features with the Schiffer conjecture; essentially the same rigidity results available for Schiffer hold in this setting \cite{EFRS}. However, Enciso, the second author, Ruiz and Sicbaldi \cite{EFRS} constructed parametric families of doubly connected bounded domains $\Omega \subset \R^2$ such that the Neumann eigenvalue problem in $\Omega$ admits a non-radial eigenfunction that is locally constant on the boundary $\pd \Omega$.

More than 25 years ago, Shklover \cite{S} considered a generalized version of the Schiffer problem. Assuming that $(M,g)$ is a complete, analytic Riemannian manifold of dimension $n \geq 2$, he considered the overdetermined problem
\begin{equation} \label{E.OverdeterminedM}
\left\{
\begin{aligned}
\ & \Delta_g u +\mu u = 0 && \textup{in } \Omega\,,\\
& u = {\rm constant} \,, \quad \partial_{\nu} u  = 0 \quad&& \textup{on } \partial \Omega\,.
\end{aligned}
\right.
\end{equation}
Here, $\Omega \subset M$ is a smooth bounded domain with $\pd \Omega$ connected, $\mu > 0$, and $\Delta_g$ denotes the Laplace-Beltrami operator on the manifold. He then claimed \cite[Page 540]{S} that the natural generalization of the Schiffer conjecture to this setting should be as follows: \emph{the existence of a nontrivial solution to \eqref{E.OverdeterminedM} implies that $\pd \Omega$ is homogeneous\footnotemark{} in $M$}.  Nevertheless, he provided a counterexample to this generalization. He constructed domains $\Omega \subset \mathbb{S}^n$, $n \geq 3$, where \eqref{E.OverdeterminedM} admits a solution $u \neq 0$, and whose boundary $\partial \Omega$ is not a homogeneous hypersurface in $\mathbb{S}^n$. In these domains, $\pd \Omega$ is however an isoparametric\footnotemark[\value{footnote}] hypersurface in $\mathbb{S}^n$. By Cartan's Theorem\footnotemark[\value{footnote}]\footnotetext{See Appendix \ref{A.geometry}.}, this implies that the principal curvatures of $\pd \Omega$ are constant. Hence, it is natural to wonder whether the principal curvatures of $\pd \Omega$ must be constant or not.

Later, Souam \cite{S2} studied \eqref{E.OverdeterminedM} in the case where $M = \mathbb{S}^2$ endowed with the standard round metric, but without assuming a priori that $\pd \Omega$ is connected. In this setting, he proved several rigidity results. On the one hand, he proved that if \eqref{E.OverdeterminedM} admits a non-trivial solution with $\mu = 2$, and $\Omega$ is simply connected, then $\Omega$ is a geodesic disk. On the other hand, he proved that the same conclusion holds if $\mu = \lambda_2(\Omega)$, the second Dirichlet eigenvalue of $\Delta_g$ on $\Omega$. Finally, he also formulated the following conjecture:

\begin{itemize}
\item[]\textbf{\hspace{-0.03cm}\cite[Conjecture 1.1]{S2} } \textit{Let $\Omega$ be a smooth domain on $\mathbb{S}^2$. Suppose that \eqref{E.OverdeterminedM} has a nontrivial solution. Then $\Omega$ is either a geodesic disk or a round symmetric annulus.}
\end{itemize}

This conjecture was very recently disproved by Fall, Minlend and Weth \cite{FMW}. They constructed a parametric family of domains which bifurcate from a round symmetric annulus in which \eqref{E.OverdeterminedM} admits nontrivial solutions. The two connected components of the boundaries of these domains have nonconstant principal curvatures. We refer to \cite[Theorem 1.5]{FMW} for a more precise statement. However, let us explicitly point out that these domains are not contractible and that they are not contained on the half-sphere $\mathbb{S}_+^2 := \{x \in \mathbb{S}^2: x_3 > 0\}$. 

Our first aim is to provide a counterexample to the Souam conjecture satisfying the aforementioned properties.  Before stating our main result, let us recall that a zonal function on $\mathbb{S}^2$ is a function that depends only on the colatitude.

\begin{theorem} \label{T.intro}
    There exists a parametric family of smooth contractible domains $\Omega \subset \mathbb{S}^2$, with $\overline{\Omega} \subset \mathbb{S}_+^2$, such that
    the Neumann eigenvalue problem
    $$
    \Delta u + \mu u = 0 \quad \textup{in } \Omega\,, \quad \pd_\nu u = 0 \quad \textup{on } \pd \Omega\,,
    $$
    admits, for some $\mu > 0$, a non-zonal eigenfunction that is constant on the boundary $\pd \Omega$. Here, $\Delta$ denotes the Laplace-Beltrami operator on $\mathbb{S}^2$ with respect to the standard round metric. 
    
    More precisely, for all $0 < |s| \ll 1$, the family of domains $\Omega \equiv \Omega^{b_s}$ is given in spherical coordinates by
$$
\Omega := \big\{(\theta,\phi) \in [0,\pi] \times \TT : \theta < \arccos(a_\star) + s b_s(\phi) \big\}\,,
$$
where $a_\star \in (\frac13, \frac12)$ and $b_s: \T \to \R$ is an analytic function of the form
$$
b_s(\phi) = b_\star \cos(8\phi) + o(1)\,.
$$
Here, $b_\star$ is a nonzero constant, and the $o(1)$ term tends to $0$ as $s \to 0$. 
\end{theorem}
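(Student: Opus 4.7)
The plan is to recast the overdetermined problem as a bifurcation equation $F(a,b)=0$ for a colatitude parameter $a\in(0,1)$ and a boundary perturbation $b\in C^{k+\alpha}(\T)$, with the candidate domain
\[
\Omega^{a,b}=\bigl\{(\theta,\phi):\theta < \arccos(a)+b(\phi)\bigr\}.
\]
For each $(a,b)$ close to the trivial branch $\{(a,0)\}$, one continues from $b=0$ the simple Neumann eigenfunction $u_{a,b}$ of $\Omega^{a,b}$ that agrees at $b=0$ with a fixed radial Neumann eigenfunction of the unperturbed geodesic disk, and defines $F(a,b)$ as the projection of $u_{a,b}|_{\pd\Omega^{a,b}}$ onto the non-constant Fourier modes in $\phi$. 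Since radial functions are automatically constant on geodesic circles centered at the north pole, $F(a,0)\equiv 0$. To turn $F$ into a smooth Fredholm map between Banach spaces, I would pull back to the reference disk via a boundary-straightening diffeomorphism; the resulting loss of classical regularity of the coefficients is absorbed by the anisotropic H\"older spaces advertised in the abstract, which allow independent control in the radial and angular directions.

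The key step is the spectral analysis of the linearization $L_a:=D_b F(a,0)$. Rotational invariance of the unperturbed disk forces $L_a$ to be a Fourier multiplier in $\phi$, $L_a[e^{im\phi}]=g_m(a)\,e^{im\phi}$, with explicit symbols $g_m(a)$ expressible as algebraic combinations of an associated Legendre function $P_\nu^m(\cos\theta)$ and its derivative at $\theta=\arccos(a)$, the index $\nu$ being determined by the selected eigenvalue $\mu(a)$. Bifurcation from the trivial branch then amounts to producing $a_\star\in(1/3,1/2)$ such that
\[
g_8(a_\star)=0,\qquad g_m(a_\star)\neq 0 \ \text{for every integer } m\geq 0 \text{ with } m\neq 8,\qquad \pd_a g_8(a_\star)\neq 0.
\]
Restricting the functional setting to the invariant subspace spanned by $\{\cos(8k\phi):k\geq 0\}$ narrows the admissible modes to multiples of $8$; the existence of $a_\star$ with $g_8(a_\star)=0$, the transversality $\pd_a g_8(a_\star)\neq 0$, and the nonvanishing of $g_{16}(a_\star),g_{24}(a_\star),\ldots$ up to an explicit threshold are verified by computer-assisted interval arithmetic on the underlying Legendre functions, while the tail $m\gg 1$ is handled by an asymptotic lower bound uniform in $a$ near $a_\star$.

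With these hypotheses in place, the Crandall--Rabinowitz theorem produces a smooth local branch $(a(s),\tilde b(s))$ of solutions to $F=0$ with $\tilde b(0)=0$ and $\tilde b'(0)=\cos(8\phi)$. The symmetry $\phi\mapsto\phi+\pi/8$ preserves the invariant subspace and acts as $-1$ on $\cos(8\phi)$, so it sends $s\mapsto -s$ along the branch and forces $a(s)=a(-s)$, in particular $a'(0)=0$. Reparameterizing $s$ so as to absorb the $O(s^2)$ drift of $a$ into the zeroth Fourier mode of $\tilde b$ yields the normal form $b_s(\phi)=b_\star\cos(8\phi)+o(1)$ with a nonzero constant $b_\star$. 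Smoothness (and in fact analyticity, by real-analytic elliptic regularity) of $\pd\Omega^{b_s}$ follows, and contractibility together with $\overline{\Omega}\subset\mathbb{S}^2_+$ hold for $|s|\ll 1$ because $\arccos(a_\star)<\pi/2$. The principal obstacles I anticipate are the functional-analytic step of making $F$ smooth and Fredholm despite its dependence on the moving domain -- where the anisotropic H\"older framework does the heavy lifting -- and the rigorous verification of the three bifurcation conditions on the transcendental symbols $g_m$, for which computer-assisted interval arithmetic is genuinely indispensable.
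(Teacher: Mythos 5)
Your strategy is sound and arrives at the same bifurcation conditions as the paper, but through a genuinely different functional reduction. You reduce to a boundary operator: you continue the (simple, zonal) Neumann eigenfunction $u_{a,b}$ to a deformed domain and let $F(a,b)$ be the non-constant part of its boundary trace, turning the problem into a Lyapunov--Schmidt-type equation on $\T$ whose linearization is a Fourier multiplier with symbols $g_m(a)$. The paper instead never continues an eigenvalue: it pulls the whole overdetermined problem back to the fixed disk $\DD$ via a boundary-localized diffeomorphism $\Phi_a^b$, and encodes both the domain deformation $b$ \emph{and} the eigenfunction perturbation inside a single interior unknown $v\in\mathbf X$ (with $b=b_v$ read off from $\pd_\rho v|_{\pd\DD}$ as in \eqref{E.corrections}), so that $G_a(v)=L_a^{b_v}[\,\overline\psi_{0,m}+w_v]+\mu_{0,m}(a)[\,\overline\psi_{0,m}+w_v]$ is a single PDE with Dirichlet--Neumann constraints built into the space $\cX^{2,\alpha}\DN$. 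What your route buys is a lower-dimensional unknown and a very transparent linearization (the scalar symbols $g_m$); what the paper's route buys is that no eigenvalue/eigenfunction continuation is ever invoked, the Fredholm and regularity analysis (Lemmas \ref{L.mappingProperties}--\ref{L.Fredholm}) is carried out in one stroke on $L_a^0$, and the kernel is simply a Dirichlet eigenfunction $\overline\varphi_{\ell,n}^{a_\star}\cos(\ell\phi)$. Your spectral dictionary is correct: $g_8(a_\star)=0$ is exactly the paper's $\mu_{0,m_\star}(a_\star)=\lambda_{8,0}(a_\star)$, $\pd_a g_8(a_\star)\neq 0$ is \eqref{E.T}, and the nonvanishing of $g_{8k}$, $k\geq 2$, is \eqref{E.NR}. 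One concrete improvement over your sketch: you handle the infinite tail $g_{8k}$ by ``an asymptotic lower bound uniform in $a$,'' but the paper avoids any asymptotics here by observing that $\lambda_{\ell,n}(a)$ is strictly increasing in both $\ell$ and $n$, which disposes of all $k\geq 2$ at once and leaves only the $\ell'=0$ resonance to the computer (Lemma \ref{L.CAP}(v)); you should adopt that. Your observation that the $\phi\mapsto\phi+\pi/8$ symmetry forces $a'(0)=0$ and hence the clean normal form $b_s=b_\star\cos(8\phi)+o(1)$ is correct and is a point the paper leaves implicit when passing from Theorem \ref{T.bifurcation} (stated for the pair $(a_s,b_s)$) to Theorem \ref{T.intro} (stated at fixed $a_\star$). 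The main gap to be filled in your proposal is the same one you flag: making $F$ a smooth Fredholm map requires a careful choice of spaces for $b$ and $F(a,b)$ and a justification that the eigenvalue continuation is well-defined and regular; the paper's anisotropic spaces $\mathbf X,\mathbf Y$ are built precisely so that $L_a^b$ and its inverse have the right mapping properties, and your approach would need an analogous boundary-space construction rather than simply ``borrowing'' the paper's interior spaces.
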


\begin{remark} We stress that our method is easily generalizable to other spaces, by bifurcating from the corresponding geodesic balls in different spaces, as long as the space is not flat (see Subsection \ref{subsec:strategy} for a more detailed discussion on the role of the curvature). Our approach just requires a transversal intersection of a zonal Neumann eigenvalue and a non-zonal Dirichlet one. We numerically see these intersections for $\mathbb{S}^3$, $\mathbb{S}^4$, or the hyperbolic plane $\mathbb{H}^2$, as shown in Figure \ref{fig:appendix}. The proof would follow in an analogous way.
\end{remark}

Let us now turn to the study of Serrin-type overdetermined boundary value problems. More precisely, for $f \in C^1(\R)$, we consider overdetermined problems of the form
\begin{equation} \label{E.OverdeterminedSerrin}
\left\{
\begin{aligned}
\ & \Delta u + f(u) = 0 && \textup{in } \Omega\,,\\
& u > 0 && \textup{in } \Omega\,, \\
& u = 0 \,, \quad \partial_{\nu} u  = {\rm constant}&& \textup{on } \partial \Omega\,.
\end{aligned}
\right.
\end{equation}
The available literature for this type of problems is huge, and providing a systematic review goes beyond the scope of this paper. We will nevertheless highlight several results that are of interest to our paper. 

First, we discuss some classical rigidity results for overdetermined problems of this form. The seminal result by Serrin \cite{Serrin} states that if $\Omega \subset \R^n$, $n \geq 2$, is a $C^2$ bounded domain such that \eqref{E.OverdeterminedSerrin} admits a solution $u$, then $u$ is radially symmetric and the domain $\Omega$ must necessarily be a ball. A complete counterpart of this result was obtained by Kumaresan and Prajapat \cite{KP} for bounded domains $\Omega \subset \mathbb{H}^n$, and for domains $\Omega \subset \mathbb{S}^n$ with $\overline{\Omega} \subset \mathbb{S}^n_+$. 

Next, concerning flexibility results for overdetermined problems of the form \eqref{E.OverdeterminedSerrin}, we focus on the cases where $\Omega \subset \R^n$, $n \geq 2$, and where $\Omega \subset \mathbb{S}^n$, $n \geq 2$, and discuss them separately. 

In Euclidean space $\R^n$, Sicbaldi \cite{S10} first showed that the boundedness assumption cannot be removed from Serrin's rigidity result (see also \cite{FMW2, SS, RRS20} in this direction). On the other hand, Ruiz \cite{R} has recently shown that you cannot remove the positivity assumption either (see \cite{W} for another flexibility result without the positivity assumption). Finally, if we consider non-simply connected domains, it is natural to wonder what happens if one relaxes the boundary conditions, allowing for the solution to be locally constant on the boundary. Kamburov and Sciaraffia \cite{KS} constructed a parametric family of nontrivial annular domains, bifurcating from standard annuli, in which the relaxed version of \eqref{E.OverdeterminedSerrin} admits a solution (we refer as well to \cite{ABM} in this direction).  

In the case of the sphere $\mathbb{S}^n$, Fall, Minlend and Weth \cite{FMW3} first showed that the assumption $\overline{\Omega} \subset \mathbb{S}_+^n$ cannot be removed from Kumaresan and Prajapat's rigidity result (see also the very recent paper \cite{HKM}, where the authors prove flexibility results for \eqref{E.OverdeterminedSerrin} with $f(s) = \lambda_1(\Omega) s$, and analyze their connection with the one-phase free boundary problem). More recently, Ruiz, Sicbaldi and Wu \cite{RSW} proved the same for the more restrictive class of contractible domains. These results can be seen as the sphere counterparts of \cite{FMW2,RRS20}. At this point, it is very natural to wonder whether the counterpart of \cite{R,W} can be established in this setting or not. Our main result in this direction, which is an immediate corollary of Theorem \ref{T.intro}, provides an affirmative answer.

\begin{corollary} \label{C.intro}
    There exists a parametric family of smooth contractible domains $\Omega \subset \mathbb{S}^2$, with $\overline{\Omega} \subset \mathbb{S}_+^2$, such that
    the overdetermined problem
\begin{equation}  \label{E.OverdeterminedCorollaryIntro}
\left\{
\begin{aligned}
\ & \Delta v + \mu (v+1) = 0 && \textup{in } \Omega\,,\\
& v = 0 \,, \quad \partial_{\nu} v = 0 && \textup{on } \partial \Omega\,.
\end{aligned}
\right.
\end{equation}
    admits, for some $\mu > 0$, a nontrivial sign-changing solution. Here, $\Delta$ denotes the Laplace-Beltrami operator on $\mathbb{S}^2$ with respect to the standard round metric, and the family of domains $\Omega \equiv \Omega^{b_s}$ is the one given in Theorem \ref{T.intro}.
\end{corollary}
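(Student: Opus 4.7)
The plan is to deduce the corollary directly from Theorem~\ref{T.intro} by an elementary algebraic normalization, with the only non-routine step being the verification that the resulting solution genuinely changes sign.

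Let $(u,\mu,\Omega)$ denote the triple produced by Theorem~\ref{T.intro}, and let $c$ be the constant boundary value of $u$ on $\partial\Omega$. My first task would be to verify that $c\neq 0$: otherwise $u$ and $\partial_\nu u$ would both vanish on $\partial\Omega$, and since $\Delta u+\mu u=0$ is an analytic elliptic equation, classical unique continuation (Aronszajn) would force $u\equiv 0$, contradicting that $u$ is a nontrivial eigenfunction. Given $c\neq 0$, I set
\begin{equation*}
v:=\frac{u}{c}-1,
\end{equation*}
and a direct calculation yields $\Delta v+\mu(v+1)=c^{-1}(\Delta u+\mu u)=0$ in $\Omega$, together with $v=0$ and $\partial_\nu v=0$ on $\partial\Omega$. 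Hence $v$ solves \eqref{E.OverdeterminedCorollaryIntro}, and it is nontrivial because $u$ is non-zonal and therefore non-constant.

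It then remains to check that $v$ changes sign. The easy half comes from integrating the PDE for $u$ and using $\partial_\nu u=0$ on $\partial\Omega$, which yields $\int_\Omega u\,dV_g=0$, and hence
\begin{equation*}
\int_\Omega v\,dV_g=-|\Omega|<0,
\end{equation*}
ruling out $v\geq 0$ in $\Omega$. The harder direction, ruling out $v\leq 0$, I would attack by contradiction: if $v\leq 0$ in $\Omega$, then $w:=-v\geq 0$ solves the Serrin-type overdetermined problem $\Delta w+\mu(w-1)=0$ in $\Omega$ with $w=\partial_\nu w=0$ on $\partial\Omega$. An analyticity plus strong-maximum-principle argument (using the fact that $\Delta w=\mu>0$ at every interior zero of $w$, so such zeros form at most an analytic subvariety of codimension at least one along which $\partial_\nu w=0$ is automatic) reduces matters to each connected component of $\{w>0\}$, on which the Kumaresan--Prajapat rigidity theorem \cite{KP} for Serrin-type problems on the half-sphere applies and forces each component to be a geodesic disk. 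A brief geometric inspection rules out that $\Omega$, being a small smooth nontrivial perturbation of a single geodesic disk, can be realized as a union of geodesic disks glued along analytic curves, producing the desired contradiction.

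The step I expect to be the main obstacle is precisely this exclusion of $v\leq 0$, which rests fundamentally on the half-sphere Serrin-type rigidity of Kumaresan--Prajapat and therefore on the hypothesis $\overline{\Omega}\subset\mathbb{S}_+^2$ built into Theorem~\ref{T.intro}. Everything else in the proof is routine algebra applied to the triple $(u,\mu,\Omega)$ furnished by the theorem.
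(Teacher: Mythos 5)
Your reduction to the normalized problem is exactly the one the paper uses: setting $v=u/c-1$ with $c$ the (nonzero) constant boundary value --- the paper's $c$ is precisely $\psi_{0,m}(\widetilde a_s)$, and your unique-continuation argument for $c\neq 0$ is sound though somewhat heavier than needed (for the trivial branch, $\psi_{0,m}(\widetilde a_s)=0$ together with the Neumann condition $\psi_{0,m}'(\widetilde a_s)=0$ already forces $\psi_{0,m}\equiv 0$ by ODE uniqueness). The integral identity $\int_\Omega v\,dV=-|\Omega|<0$ is correct and cleanly rules out $v\geq 0$.

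Where you diverge from the paper is the other half of the sign change, and here your argument has a genuine gap. You pass to $w=-v\geq 0$ and want to apply Kumaresan--Prajapat on each component of $\{w>0\}$. But the boundary of such a component consists of parts of $\partial\Omega$ together with parts of the interior nodal set $\{w=0\}$, and this boundary need not be $C^2$: there may be corners where a nodal arc meets $\partial\Omega$, and isolated interior zeros of $w$ (which occur whenever the Hessian at an interior minimum is positive definite, a situation compatible with $\Delta w=\mu>0$) produce punctures rather than boundary arcs. KP, like Serrin's original result, is stated for $C^2$ domains, so it does not directly apply to these pieces, and the final ``brief geometric inspection'' step --- excluding that $\Omega$ be a union of geodesic disks glued along analytic arcs --- is asserted rather than carried out. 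Your approach also never uses the hypothesis $m_\star\geq 2$, which is in fact necessary for the sign change to hold on the trivial branch: for $m_\star=1$ the zonal Neumann eigenfunction attains its extremum at the boundary and $\omega_0=\psi_{0,1}/\psi_{0,1}(\widetilde a_\star)-1\leq 0$ in $\Omega_{a_\star}$.

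The paper's route is perturbative and much lighter: at $s=0$ the normalized function $\omega_0=\psi_{0,m_\star}/\psi_{0,m_\star}(\widetilde a_\star)-1$ is explicit; since $\psi_{0,m_\star}$ is an oscillating zonal Neumann eigenfunction and $m_\star\geq 2$, this function already takes both signs in $\Omega_{a_\star}$; the expansion $v_s=s\,\overline v_{a_\star}+o(s)$ and $\widetilde w_s=\overline\psi_{0,m}^{a_s}+w_{v_s}$ (equations \eqref{E.expansionvs} and \eqref{E.wstilde}) then propagate the sign change to $\omega_s$ for all small $|s|$ by continuity. This requires no rigidity theorem and no discussion of nodal-set regularity. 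If you want to salvage your KP-based exclusion, you would need to establish the smoothness of the relevant free boundary (or invoke a version of Serrin/KP rigidity valid under weaker regularity), and you would still need to address the case of isolated interior zeros; the perturbative argument sidesteps all of this.
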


Before discussing the proof of our main result, namely Theorem \ref{T.intro}, we would like to highlight that Theorem \ref{T.intro} is the first flexibility result for Schiffer-type problems, which falls outside the flexibility setting for Serrin-type problems. More precisely, the flexibility results in \cite{FMW} can be somehow related to the ones in \cite{FMW2, FMW3}. Likewise, the flexibility result in \cite{EFRS} can be connected to \cite{KS,ABM}. Theorem \ref{T.intro} is the first flexibility result for a Schiffer-type problem which does not have a counterpart for Serrin-type problems.

\subsection{Computer-assisted techniques}

One of the main tools in our proof of Theorem \ref{T.intro} is the use of computer-assistance in the form of rigorous interval arithmetics in order to perform several computations. We will give an extremely short overview of this subject here. We remark the pioneering work of R. E. Moore on this subject \cite{Moore} and refer to \cite{Tucker} for an introduction to this topic. We also mention \cite{GS, NPW} as survey references for the specific application of rigorous interval arithmetics to PDE. 

For a generic operation $\ast$, and two intervals $x^I$, $y^I$, we can define the corresponding arithmetic on intervals by $$ x^I \ast y^I := \{ x \ast y \; : \;  x \in x^I, \; y \in y^I \}\,.$$
The main advantage of such interval arithmetic is that, in contrast with the element-wise one, it allows for rigorous implementations on the computer using finite precision. The reason behind this is that one can give up on computing the exact resulting interval, and simply ensure that the computed interval obtained from the computer \textit{encloses} the theoretical interval-arithmetic result. That property suffices to ensure rigor. For example, the sum of the intervals $x^I = [x^l, x^u]$ and $y^I = [y^l, y^u]$ is implemented as follows:
$$ x^I + y^I = [(x^l + y^l)^-, (x^u + y^u)^+]\,,$$ where we denote by $z^-$ (resp. $z^+$) the rounding-down (resp. rounding-up) operator that sends $z$ to the nearest lower (resp. upper) representable number by the computer. The crucial property of that definition is that \textit{for any instance of $x \in x^I$ and $y\in y^I$ we will have a rigorous guarantee that $x + y \in x^I + y^I$.} 

Such process is completely rigorous and independent of the architecture of the computer. In our case, we write our code in Sage, using arbitrary precision real balls to represent our intervals. The arithmetic is based on the arb module of FLINT. Our code can be found in the supplementary material to this paper (see TeX source on the arXiv version).

The use of computer-assisted techniques has traditionally allowed to solve extremely challenging problems in PDE and geometry, such as the relativistic stability of matter \cite{FdL}, the existence of Lorenz attractors \cite{Tucker:Lorenz}, or the Kepler conjecture \cite{Hales}. We stress that these techniques are gaining a lot of traction in recent years, with groundbreaking works such as the existence of singularities from $C^\infty$ initial data to the incompressible Euler equations with boundary \cite{CH2}, the existence of Einstein metrics on $\mathbb S^{12}$ \cite{BH}, the existence of self-similar profiles for the cubic focusing NLS equation \cite{DS}, or the solution to P\'olya's conjecture on balls \cite{FLPS}.

In the specific context of bifurcation problems, computer-assistance has also been used successfully. We remark \cite{CCG}, where the Crandall-Rabinowitz theorem was used in combination with computer-assistance to show the existence of global solutions for the inviscid SQG equation, and \cite{AK}, where the computer is used to prove properties of the bifurcation diagram for the Kuramoto–Sivashinski equation.

Computer-assisted techniques have also been used to understand spectral properties of the Laplacian. We point out again the recent groundbreaking work \cite{FLPS}, where computer-assistance is used to understand the spectral properties of the Neumann Laplacian on the ball, and resolve P\'olya's conjecture on that domain. In the case of the planar Dirichlet Laplacian, computer-assistance has been used to disprove that the first, second and fourth eigenvalues determine a triangle \cite{GO}, and to construct new and topologically simpler examples where the nodal line of the second eigenfunction does not touch the boundary of the domain \cite{DGH}. In our specific setting of understanding the Laplace--Beltrami operator on the sphere, we remark \cite{DahneSalvy}, where computer-assisted techniques were used to obtain rigorous enclosures for the Dirichlet eigenvalues of the Laplacian over spherical triangles.

\subsection{Strategy of the proof of Theorem \ref{T.intro}} \label{subsec:strategy}

Our approach to show Theorem \ref{T.intro} is to apply local bifurcation theory along the family of zonal solutions that exist over spherical caps of $\mathbb S^2$. We take $(\theta, \phi)$ to be our azimuthal and longitudinal angles and define our spherical caps by
\begin{equation} \label{E.spherical_cap}
\Omega_a:= \big\{(\sin(\theta)\cos(\phi), \, \sin (\theta )\sin(\phi),\, \cos(\theta)): \cos(\theta) > a \big\} \subset \mathbb{S}^2\,,
\end{equation}
which are the geodesic balls of $\mathbb S^2$ centered around the north pole (illustrated in blue on Figure \ref{fig:sphere}). Since those domains are symmetric around the $z$-axis, any zonal Neumann eigenfunction $u_0(\theta)$ will be constant on $\partial \Omega_a$.

\begin{figure}[t]
    \centering
    \includegraphics[width=0.5\textwidth]{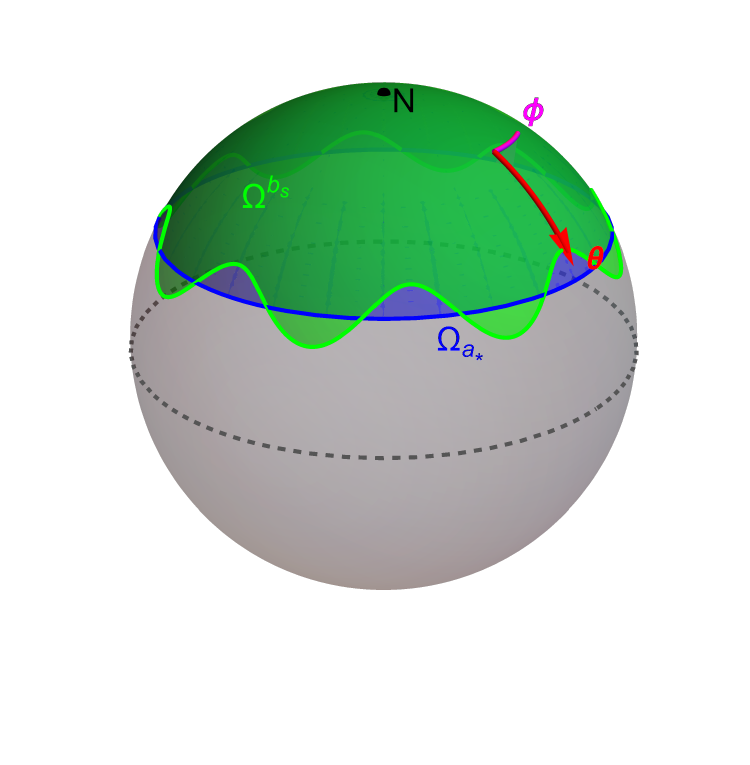} \hspace{1cm}
    \caption{Schematic picture of the spherical cap $\Omega_{a_\star}$ (blue) and our perturbed domain $\Omega^{b_s}$ (green). Our spherical coordinates are $(\theta, \phi)$ and our caps are centered around the north pole $\rm N $.}
    \label{fig:sphere}
\end{figure}

In order to obtain non-zonal solutions, we consider a perturbation of the form
$$
u(\theta, \phi) = u_0(\theta) + s u_1( \theta, \phi) + o(s)\,, \quad 0< |s | \ll 1\,,
$$
and perturb our domains $\Omega_a$ as well. The linearization of the overdetermined problem will impose that $u_1$ must be a Dirichlet eigenfunction of $\Omega_a$ with the same eigenvalue as $u_0$. Hence, the bifurcation condition becomes finding values $(a_\star, \lambda_\star)$ such that $\lambda_{\star}$ is both a zonal Neumann eigenvalue of $\Omega_{a_\star}$ (with eigenfunction $u_0$) and a non-zonal Dirichlet eigenvalue of $\Omega_{a_\star}$ (with eigenfunction $u_1$). In Figure \ref{fig:far}, we see a plot of the zonal Neumann spectrum in terms of $(a, \lambda)$, together with the Dirichlet spectrum over modes $\cos (8 \phi )$. Let us also point out the analogous plot in Figure \ref{fig:a} for multiples of $\cos (6 \phi )$.

\begin{figure}[t]
    \centering
    \includegraphics[width=0.465\textwidth]{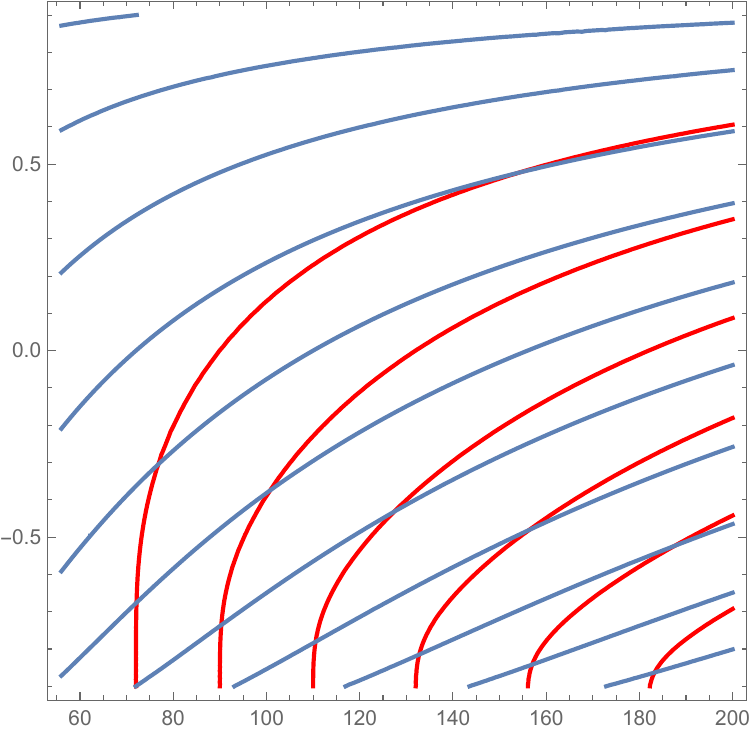} \hspace{1cm}
    \includegraphics[width=0.45\textwidth]{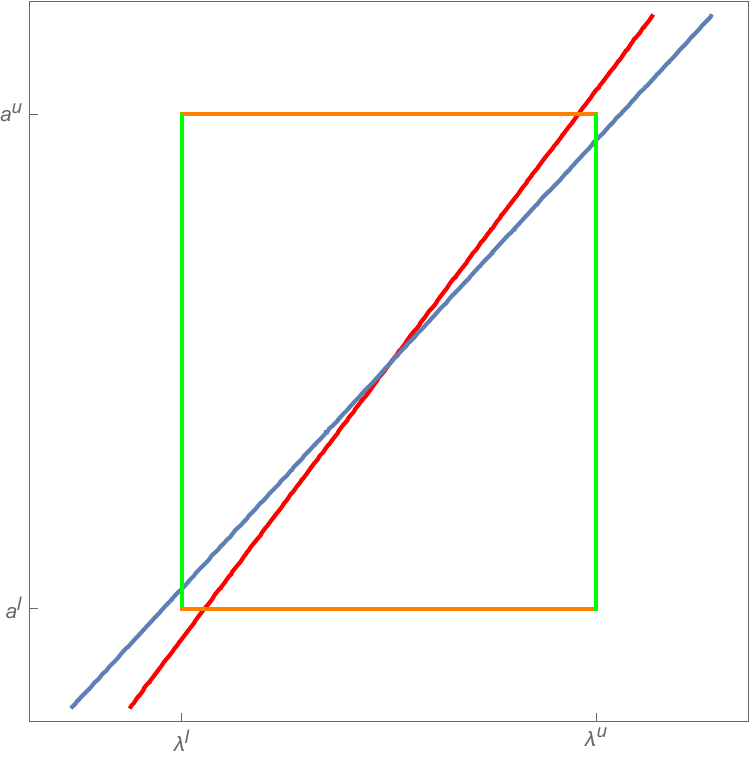}
    \caption{The plot shows the curves $(\lambda, a)$ for which $\lambda$ is an eigenvalue of $\Omega_a$. Blue corresponds to the cases where $\lambda$ is the eigenvalue of a zonal Neumann eigenfunction. Red corresponds to the cases where $\lambda$ is the eigenvalue of a Dirichlet eigenfunction with mode $\ell = 8$. We observe that at $a = -1$ we recover the sphere spectrum $n(n+1)$. Our intersection of interest happens at $a = 0.477\ldots$ and $\lambda = 154.19\ldots$, and we can visually see it is transversal on the right plot. The values of $\lambda^l$, $\lambda^u$, $a^l$, $a^u$ are given in \eqref{eq:rho_lambda_box}--\eqref{eq:a_nu_box}.}
    \label{fig:far} \label{fig:close}
\end{figure}

We focus our interest on the intersection $(a_\star, \lambda_\star) \approx (0.477 \ldots,\, 154.19\ldots)$ that we see in Figure \ref{fig:far}. We prove the existence of a non-zonal perturbation by using the Crandall-Rabinowitz theorem \cite{CR}. We structure our proof in two parts: (i) checking the properties of the spectrum that are needed for the perturbation argument;  (ii) implementing a suitable functional setting and checking the Crandall-Rabinowitz theorem hypotheses. Part (i) corresponds to Proposition \ref{P.ComputerAssisted}, which is proved in Section \ref{S.CAP}, and part (ii) corresponds to the proof of Theorem \ref{T.bifurcation} in Section \ref{S.bifurcation}.

Part (i) fundamentally consists in showing the intersection of the blue and red curves on Figure \ref{fig:far} at $(a_\star, \lambda_\star)$, as well as showing that the intersection is transversal. In order to do that we use that the eigenfunctions of $\Delta_g$ on spherical caps are given by Legendre functions of the first kind, and rely on computer-assisted techniques to find properties of the spectrum. In order to show properties with the computer, the first step is to reduce these properties to a set of inequalities. For example, the intersection of eigenvalues that we can see in Figure \ref{fig:far} is proved by means of the Poincar\'e-Miranda theorem, which just requires us to show an alternation of signs on the orange-green box from that figure. See Theorem \ref{T.PM} for the precise statement of the $2d$ -- Poincar\'e-Miranda theorem. Similarly, the transversality of the intersection reduces to an inequality of the derivatives of the Legendre functions.

Once we have reduced the problem to a set of inequalities involving Legendre functions of the first kind (concretely, Lemma \ref{L.CAP}) we rely on the computer. We implement a \textit{fully rigorous, interval-arithmetic version} of the Legendre functions of the first kind and their derivatives. Our function accepts intervals for both $\lambda^I$ and $a^I$ and outputs a value of the Legendre function that is \textit{rigorously guaranteed} to contain the corresponding result for any $(\lambda, a) \in \lambda^I \times a^I$. The implementation is based on a Taylor series around the origin. We implement the first $K = 100$ terms of the Taylor series using interval arithmetics, and use the rigorous bound on the tail of the series provided by Lemma \ref{L.tail}.

Regarding part (ii), we study our problem in a fixed domain $\mathbb D$ where we can write a pulled-back problem using a diffeomorphism $\Phi : \mathbb D \to \Omega_a^{b}$ (see \eqref{E.Omab} for the definition of the deformed domain $\Omega_a^b$).  We can then encode both the information of $u$ and the domain $\Omega_a^{b}$ on a single function $v$ satisfying Dirichlet boundary conditions on $\pd \mathbb D$. Moreover, we fundamentally use anisotropic H\"older spaces with one extra derivative on the azimuthal direction to formulate and check the Crandall-Rabinowitz conditions, as done in \cite{FMW, EFRS}. That avoids a loss of derivatives that otherwise arises from tracking the evolution of the boundary $\pd\Omega_a^b$. One fundamental difference with \cite[Theorem 1.5]{FMW} and \cite{EFRS} is the fact that our domains are contractible. This requires a more careful analysis at the north pole in order to ensure regularity. We surpass this obstruction using a boundary localized version of the diffeomorphism $\Phi$, which is more adequate for the contractible setting.

Lastly, we want to compare Theorem \ref{T.intro} with the Schiffer conjecture (see page 1). The obstruction in order to use our approach to tackle the Schiffer conjecture (and hence the Pompeiu problem) is the presence of the scaling symmetry in the Euclidean setting. In $\mathbb R^2$, the Laplacian over any ball $B_a(0)$ has the same spectrum, rescaled by $1/a^2$. In particular, the version of Figure \ref{fig:far} on the plane would only have curves of the form $\lambda = C/a^2$ and we would not have any transversal intersection.\footnote{ Empirically, we can also see from Figures \ref{fig:far}, \ref{fig:a} how the angle of the intersections decreases as $a \nearrow 1$. This is because the geometry of $\Omega_a$ (appropriately rescaled) converges to a flat disk, as $a \nearrow 1$.} Hence, we see that the geometry plays a crucial role: the curvature of $\Omega_a$ kills the scaling symmetry and allows us to have a the non-trivial behavior seen in Figure \ref{fig:far}, with transversal intersections. Once the transversal intersection of eigenvalues is provided, our techniques are agnostic to the geometry, and one can prove the same results on higher-dimensional spheres or the hyperbolic plane using the intersections on Figure \ref{fig:appendix}.

\section{The Dirichlet and Neumann spectrum of a spherical cap} \label{S.CAP}

We recall our spherical cap definition from \eqref{E.spherical_cap}, where $a \in (0, 1)$ denotes the height of the cap, and $(\theta, \phi) \in [0, \pi] \times \TT$ are the azimuthal and longitudinal spherical coordinates, as illustrated in Figure \ref{fig:sphere}. In this section, we prove the auxiliary results about Neumann and Dirichlet eigenvalues on spherical caps that we will need in the proof of our main result.

We start by setting some notation. For all $\nu \geq 0$ and all nonnnegative integer $\ell$, we denote by $P_\nu^\ell$ the Legendre function of the first kind of degree $\nu$ and order $\ell$. Then, we denote by $\kappa_{\ell,k}$ the $k$-th positive $\nu$-zero of 
$$
P_\nu^\ell(a) = 0\,, \quad \textup{with} \quad \nu \not \in \{0,1,2, \ldots,\ell-1\}\,,
$$
and set $\lambda_{\ell,n}(a) = \kappa_{\ell,n+1}(\kappa_{\ell,n+1}+1)$. Note that throughout the paper $n$ will be a nonnegative integer. It is well-known that an orthogonal basis of $L^2(\Omega_a, \sin(\theta) d\theta d \phi)$ consisting of Dirichlet eigenfunctions of the spherical cap $\Omega_a$ is

\begin{equation}
    \big\{ P_{\kappa_{\ell,n+1}}^0(\cos(\theta)),\, P_{\kappa_{\ell,n+1}}^\ell(\cos(\theta)) \cos(\ell \phi),\, P_{\kappa_{\ell,n+1}}^\ell(\cos(\theta)) \sin(\ell\phi): \ell \geq 1, n \geq 0  \big\}\,,
\end{equation}
and that the Dirichlet spectrum of the spherical cap $\Omega_a$, counting multiplicities is $\{\lambda_{\ell,n}(a)\}_{\ell,n=0}^\infty$.  We will omit the dependence of the eigenvalues on $a$ when no confusion may arise.

Likewise, we set
$\nu_{0,0} = 0$ and, for $(\ell,k) \neq (0,0)$, we denote by $\nu_{\ell,k}$ the $k$-th positive $\nu$-zero of
$$
\frac{\dd}{\dd s}P_\nu^\ell(s)\, \bigg|_{s=a} = 0\,, \quad \textup{with} \quad \nu \not \in \{0,1,2, \ldots,\ell-1\}\,.
$$
Then, we set
$$
\mu_{0,n}(a) := \nu_{0,n} (\nu_{0,n}+1) \quad \textup{ and } \quad \mu_{\ell,n}(a):= \nu_{\ell,n+1}( \nu_{\ell,n+1}+1)\,. 
$$
An orthogonal basis of $L^2(\Omega_a, \sin(\theta) d\theta d \phi)$ consisting now of Neumann eigenfunctions of the spherical cap $\Omega_a$ is 
\begin{equation}
    \big\{ P_{\nu_{0,n}}^0(\cos(\theta)),\,P_{\nu_{\ell,n+1}}^\ell(\cos(\theta)) \cos(\ell \phi),\, P_{\nu_{\ell,n+1}}^\ell(\cos(\theta)) \sin(\ell\phi): \ell \geq 1, n \geq 0  \big\}\,.
\end{equation}
and the corresponding Neumann spectrum of $\Omega_a$ is $\{\mu_{\ell,n}(a)\}_{\ell,n=0}^\infty$.  We will also omit the dependence of the eigenvalues on $a$ when no confusion may arise.

Having this notation at hand, we can state the main result of this section. The rest of this section is devoted to prove it. We use the standard notation from interval arithmetics where an interval in decimal notation is denoted by a string of common digits followed by subscripts and superscripts denoting the lower-bound and upper-bound following digits. For example $123_4^5 = [1234, 1235]$ and $0.4242_{00}^{42} = [0.4242, 0.424242]$.

\begin{proposition} \label{P.ComputerAssisted}
    There exist $a_\star \in  0.4774365682415_2^9$, $\lambda_\star \in 154.1915744945_0^2$ and $m_\star \in \mathbb N$, with $m_\star \geq 4$, such that:
    \begin{align}
        &\circ\ \lambda_\star =  \mu_{0,m_\star}(a_\star) = \lambda_{8,0}(a_\star)  \,, \tag{B} \label{E.B} \\
        &\circ\ \mu_{0,m_\star}'(a_\star) \neq \lambda_{8,0}'(a_\star)\,, \tag{T} \label{E.T}\\
        &\circ\ \lambda_{8,0}(a_\star) \neq \lambda_{8m,n}(a_\star)\, \quad \textup{for all } (m,n) \neq(1,0)\,. \hspace{7cm}\tag{NR} \label{E.NR}
    \end{align}
\end{proposition}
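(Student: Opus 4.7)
The strategy is to translate each of the conditions (B), (T), (NR) into finitely many (or finitely verifiable) inequalities involving the Legendre functions $P_\nu^\ell$ and their $\nu$- and $a$-derivatives, and then discharge these inequalities rigorously using the interval-arithmetic Legendre code described in the introduction. The common analytic ingredient is that $\mu_{0,m}(a_\star)$ and $\lambda_{8,0}(a_\star)$ are, by the definitions preceding the proposition, implicitly defined through the vanishing of
$$
F_1(\nu, a) := \frac{\dd}{\dd s}P_\nu^0(s)\, \bigg|_{s=a}\,, \qquad F_2(\nu, a) := P_\nu^8(a)\,,
$$
so the entire proposition lives at the level of two explicit (meromorphic) functions of $(\nu, a)$.

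For (B), I would apply the Poincaré--Miranda theorem (Theorem \ref{T.PM}) to the pair $(F_1, F_2)$ on the small box $\nu^I \times a^I$ produced from the numerically-located intersection in Figure \ref{fig:far} (see \eqref{eq:rho_lambda_box}--\eqref{eq:a_nu_box}). Concretely, I pick a candidate $(\nu_\star, a_\star)$ close to the crossing, enclose $F_1$ and $F_2$ on the four faces of the box using the rigorous interval arithmetic, and check that $F_1$ changes sign across the $\nu$-faces while $F_2$ changes sign across the $a$-faces (or vice versa). This yields a zero of $(F_1, F_2)$ inside the box and hence $a_\star$ and $\lambda_\star = \nu_\star(\nu_\star+1)$ in the claimed decimal enclosures. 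The index $m_\star$ is then identified by counting how many zonal Neumann eigenvalues of $\Omega_{a_\star}$ lie below $\lambda_\star$, a claim that is itself an inequality about $F_1$ that can be checked via the same code; the numerics already indicate $m_\star \geq 4$.

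For (T), implicit differentiation of $F_1(\nu_{0,m_\star}(a),a)=0$ and $F_2(\kappa_{8,1}(a),a)=0$ expresses $\mu_{0,m_\star}'(a_\star)$ and $\lambda_{8,0}'(a_\star)$ as the chain-rule ratios $-2(\nu_\star+\tfrac12)\,\partial_a F_i/\partial_\nu F_i$ at the intersection. Hence (T) becomes a single explicit inequality between Legendre values and their $\nu$- and $a$-derivatives at $(\nu_\star, a_\star)$, which I would enclose on $\nu^I \times a^I$ and verify that the resulting interval stays away from zero with a definite sign. For (NR), since the family $\{\lambda_{8m,n}(a_\star)\}_{(m,n)\neq(1,0)}$ is infinite, I would split into two parts. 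For the finite part, I choose an explicit cutoff $(M,N)$ and enclose each $\lambda_{8m,n}(a_\star)$ with $m \leq M$, $n \leq N$ using the Legendre code, checking that none of the enclosures meets the enclosure of $\lambda_\star$. For the tail, I use a crude but rigorous lower bound on Dirichlet eigenvalues of $\Omega_{a_\star}$: the Courant--Fischer characterization together with the trivial inclusion of $\Omega_{a_\star}$ in a hemisphere gives $\lambda_{\ell,n}(a_\star) \geq c(\ell^2 + n^2)$ for some explicit $c>0$, so for $(M,N)$ large enough the tail is automatically above $\lambda_\star$.

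The engine behind all three steps is the rigorous enclosure of $P_\nu^\ell(a)$ (and its $\nu$- and $a$-derivatives) on interval inputs, which I would implement by truncating the power series around $0$ at $K=100$ terms in arbitrary-precision interval arithmetic and controlling the remainder via Lemma \ref{L.tail}; everything else then reduces to evaluating finitely many such enclosures. I expect the main technical obstacle to be calibrating the Poincaré--Miranda box: it must be small enough that the $(\partial_\nu F_1, \partial_a F_2)$ enclosures are tight enough to force unambiguous sign alternation on its boundary, yet consistent with the required decimal enclosures for $a_\star$ and $\lambda_\star$. The secondary obstacle is making the cutoff $(M,N)$ in (NR) small enough to be computationally tractable while keeping the tail estimate clean; a careful use of $\nu$-monotonicity of $\kappa_{\ell,n+1}(a)$ in $\ell$ and $n$ should make this manageable.
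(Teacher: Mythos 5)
Your overall strategy coincides with the paper's: encode the bifurcation condition as the simultaneous vanishing of $F_1=\partial_a P_\nu^0(a)$ and $F_2=P_\nu^8(a)$, locate a transversal zero via Poincar\'e--Miranda (Theorem~\ref{T.PM}) on a small box, verify transversality by implicit differentiation, and discharge all resulting inequalities through interval-arithmetic enclosures of Legendre functions built from Lemma~\ref{L.tail}. However, there is a genuine gap in your argument for \eqref{E.B}. Poincar\'e--Miranda gives you a zero $(\nu_\star,a_\star)$ of $(F_1,F_2)$, hence a $\lambda_\star=\nu_\star(\nu_\star+1)$ that is simultaneously a zonal Neumann eigenvalue $\mu_{0,m_\star}(a_\star)$ and \emph{some} $\ell=8$ Dirichlet eigenvalue $\lambda_{8,n_\star}(a_\star)$. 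The proposition asserts $\lambda_\star=\lambda_{8,0}(a_\star)$, i.e.\ $n_\star=0$, and this is not optional: both the nonresonance statement \eqref{E.NR} and the kernel identification in Proposition~\ref{P.CR} are formulated for $n=0$. You discuss pinning down the Neumann index $m_\star$, but never the Dirichlet index $n_\star$. In fact, proving $n_\star=0$ is the computationally heaviest step in the paper: one first bounds $\lambda_{8,0}(a_\star)>2$ via the inclusion $\Omega_{a_\star}\subset\mathbb S^2_+$, then runs a branch-and-bound interval sweep to show $P_{\lambda,8}(\rho)\neq0$ for all $(\lambda,\rho)\in[2,\lambda^{\mathrm{aux}}]\times\rho^I$, and finally uses the sign of $Q_{\lambda,8}$ on $[\lambda^{\mathrm{aux}},\lambda^u]\times\rho^I$ to conclude there is at most one zero there (Lemma~\ref{L.CAP}~(ii)); since $\lambda_\star$ is one, it must be the first. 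Some version of this step has to be added.

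Two further remarks, neither fatal. For \eqref{E.NR} your plan (enclose $\lambda_{8m,n}(a_\star)$ for all $(m,n)$ below a cutoff, then invoke a tail bound $\lambda_{\ell,n}\gtrsim\ell^2+n^2$) is workable but much heavier than necessary: each finite check would require rigorously enclosing an implicitly defined eigenvalue over the interval $a^I$, and the tail bound itself needs justification. The paper instead uses the strict Courant--Fischer monotonicity of $\lambda_{\ell,n}(a)$ in each of $\ell$ and $n$, which gives $\lambda_{8,0}<\lambda_{8j,0}\leq\lambda_{8j,n'}$ for all $(j,n')\neq(1,0)$ with $j\geq1$ at no computational cost, and reduces the whole of \eqref{E.NR} to the single check $P_{\lambda_\star,0}(\rho_\star)\neq0$ for the $\bar m=0$ case (Lemma~\ref{L.CAP}~(v)). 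Finally, your chain-rule formula in the \eqref{E.T} step involves $\partial_a F_1=\partial_a^2 P_\nu^0$, a second $a$-derivative; the paper substitutes the ODE \eqref{eq:algiers} to express it through zeroth- and first-order Legendre quantities, so no second $a$-derivatives need to be implemented.
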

\begin{remark}
Numerically, we observe that $m_\star = 4$. One could use the computer-assisted methods developed in the proof of $n_\star = 0$ to show this (see Subsection \ref{subsec:ProofCAP}). However, the computation becomes quite expensive and would make sense to implement a second derivative with respect to the parameter $\lambda$ in order to have a faster method. Since this is not relevant for our main results, we have not done this.
\end{remark}

\begin{remark} We remark that one can prove an analogous Proposition to \ref{P.ComputerAssisted} for $\ell = 6$,  $a_\star = 0.81084\ldots$, and $\lambda_\star = 264.79\ldots$. We can see that intersection in Figure \ref{fig:a}. This is an example of a ``flatter'' geometry since $a_\star$ is closer to $1$.
\end{remark}

The proof of Proposition \ref{P.ComputerAssisted} is computer-assisted, based on interval arithmetics. We will write the bifurcation condition as the equation
$$
\frac{\dd}{\dd s} P_\nu^0(s) \Big|_{s=a} = 0 = P_\nu^8(a)\,, \quad (a,\nu) \in (0,1) \times (0,\infty)\,,
$$
and show the existence of one such solution $(a_\star, \nu_\star)$ by means of the Poincar\'e-Miranda theorem. This setting reduces the equation to a set of inequalities, and hence it is amenable for computer-assisted implementation. The main part will consist on the implementation of rigorous, interval arithmetic versions of $P_{\nu}^\ell (a)$, together with its derivatives in $a$ and $\nu$. The transversality and non-resonance conditions are already inequalities and therefore amenable to a direct implementation by interval arithmetics.

The rest of this section is organized as follows. In Subsection \ref{subsec:legendre} we detail our implementation of the Legendre functions of the first kind and their derivatives. The approach is based on a explicit computation of their Taylor series together with rigorous bounds for the tail. We defer the proof of Proposition \ref{P.ComputerAssisted} to Subsection \ref{subsec:ProofCAP}.

\subsection{Rigorous implementation of the Legendre Functions} \label{subsec:legendre}

The objective of this subsection is to derive an explicit recursive formula for the Taylor series coefficients of the Legendre functions of the first kind, and to rigorously bound the remainder of the series. This will allow us to implement a rigorous, interval-arithmetic version of $P_\nu^\ell (a)$ and its derivatives. Our implementation takes intervals in both slots of the Legendre function and outputs an interval that is \textit{rigorusly} known to contain the result $P_\nu^\ell (a)$ for any possible $(a,\nu)$ on their respective intervals. This is achieved by computing the Taylor series of the Legendre function at the origin up to a finite order $K = 100$, (with standard interval arithmetics that keeps rigorous error bounds on each coefficient) and then adding the rigorous bound on the remainder of the series. Thus, the main results of this subsection are the recurrence \eqref{eq:recurrence_p}--\eqref{eq:recurrence_q}, which allows us to compute the first $K$ Taylor coefficients, and Lemma \ref{L.tail}, which allows us to bound the remainder.

In order to obtain the simplest possible form for the Legendre functions of the first kind, we have chosen to work with the coordinate $\rho = \tan (\theta / 2) = \frac{\sqrt{1 - a}}{\sqrt{1 + a}}$, which corresponds to applying a stereographic projection to project the sphere minus the south pole to the equatorial plane. We also work with $\lambda$ instead of $\nu$, and recall that they are related by $\lambda = \nu (\nu + 1)$ or $\nu = \frac{-1 + \sqrt{ 1 + 4\lambda}}{2}$. 

For the remainder of this subsection, we drop the explicit dependence on $\nu, \ell$ and denote by $P(\rho)$ a generic Legendre function of mode $\ell$, eigenvalue $\lambda = \nu (\nu + 1)$ and written in terms of the stereographic coordinate $\rho = \tan (\theta / 2)$. We also denote by $Q$ its derivative with respect to $\lambda$, that is $Q (\rho) = Q_{\nu}^\ell (\rho) := \frac{\dd\nu}{\dd\lambda} \frac{\dd}{\dd\nu} P_{\nu}^\ell (\rho )$. Hence, the Legendre function $P$ satisfies
\begin{equation} \label{eq:algiers}
\left( \p_{\rho}^2 + \frac{\p_\rho}{\rho} - \frac{\ell^2}{\rho^2} \right) P + \frac{4\lambda}{(1+\rho^2)^2} P = 0 \qquad \textup{in } (0,\infty)\,,
\end{equation}

We can desingularize the expression at $ \rho = 0$ by considering $P(\rho) = \rho^\ell \bar P(\rho)$. Then, the function $\bar P$ is even and analytic, and satisfies the ODE
\begin{equation} \label{eq:Pbar_ode}
(1+\rho^2)^2 \left( \bar{P}''(\rho) + \frac{2\ell+1}{\rho}\bar{P}'(\rho) \right) + 4\lambda \bar{P}(\rho) = 0 \quad \textup{in } \R\,.
\end{equation}
Since $\bar{P}$ is an even function, we seek a series solution in powers of $\rho^2$, namely
\[
\bar{P}(\rho) = 1 + \sum_{k=1}^{\infty} \frac{p_k }{k ( k + \ell ) } \rho^{2k}\,,
\]
for some $p_k$ just depending on the parameters $\lambda$ and $\ell$. The recurrence reads as
\begin{equation} \label{eq:recurrence_p}
p_{k+1} = -   2 p_k - \frac{\lambda}{k ( k + \ell )} p_k  - p_{k-1} \quad \text{for } k \geq 2\,, \quad \mbox{ and } \quad p_1 = -\lambda, \quad p_2 = 2\lambda + \frac{\lambda^2}{1+\ell} \,.
\end{equation}
Similarly, we can express $Q(\rho) = \rho^\ell \bar Q (\rho)  $ with 
$$
\bar Q(\rho) = \sum_{k=1}^\infty \frac{q_k}{k (k + \ell )} \rho^{2k}\,.
$$
The coefficients $q_k$ are simply the derivative with respect to $\lambda$ of $p_k$. Even more, the recurrence for $q_k$ can be obtained simply deriving with respect to $\lambda$ the one above. We get that
\begin{equation} \label{eq:recurrence_q}
q_{k+1} = - 2 q_k - \frac{\lambda}{k ( k  + \ell ) } q_k - \frac{1}{k (k + \ell ) } p_k - q_{k-1} \quad \text{for } k\geq 2\,, \quad  \text{ and } \quad q_1 = -1\,, \quad q_2 = 2 + \frac{2\lambda}{1+\ell}\,.
\end{equation}
Equations \eqref{eq:recurrence_p}--\eqref{eq:recurrence_q} allow us to compute the Taylor coefficients for $P$ and $Q$ up to any order, using interval arithmetics. The fundamental missing ingredient for a computer-assisted proof is a rigorous bound on the contribution of the tail of the series. The following lemma gives us this bound.

\begin{lemma} \label{L.tail} Fix $K \in \mathbb N $ and assume that $\gamma > 1$ satisfies the inequality
\begin{equation}\label{eq:condition}
\frac{(\gamma - 1)^2}{\gamma} \geq \frac{2\lambda}{(K+2) ( K + 2 + \ell ) }
\end{equation}
Fix also $C > 0$ so that we have the bounds
\begin{equation}
| p_{K+1} + p_K | \leq C \lambda\,, \quad | q_{K+1} + q_K | \leq C\,, \quad |p_{K+1}| \leq \frac{C \lambda}{\gamma - 1}\,, \quad | q_{K+1}| \leq \frac{C}{\gamma - 1}\,.
\end{equation}
Then, we have that the series $\bar{P}(\rho) = 1 + \sum_{k=1}^{\infty} \frac{p_k }{k ( k + \ell ) } \rho^{2k}$ and $\bar Q(\rho) = \sum_{k = 1}^\infty \frac{q_k}{k ( k + \ell )} \rho^{2k}$ are absolutely convergent for $\rho^2 \gamma < 1$, and moreover we can estimate the tails for $P$ and $Q$ as follows:
\begin{align*}
\left| P(\rho) - 1 - \sum_{k = 1}^K \frac{p_k}{k ( k + \ell ) }\rho^{2k+\ell}  \right| &\leq \frac{C \lambda \rho^{\ell + 2K} \cdot \rho^2 \gamma}{K ( K + \ell ) (\gamma - 1) ( 1 - \rho^2 \gamma )} \,,
\\
\left| P'(\rho) - \sum_{k = 1}^K \frac{(2k + \ell) p_k}{ k (k + \ell )}\rho^{2k+\ell - 1} \right| &\leq  \frac{2 \lambda C \rho^{\ell + 2K-1} \cdot \rho^2 \gamma }{K  (\gamma - 1) ( 1 - \rho^2 \gamma )}\,,
\\
\left| Q(\rho) - \sum_{k = 1}^K \frac{q_k}{k ( k + \ell ) }\rho^{2k+\ell} \right| &\leq \frac{C \rho^{\ell + 2K} \cdot \rho^2 \gamma }{K ( K + \ell ) (\gamma - 1) ( 1 - \rho^2 \gamma )} \,,
\\
\left| Q'(\rho) - \sum_{k = 1}^K \frac{ (2k + \ell )q_k}{ k(k + \ell )}\rho^{2k+\ell - 1} \right| &\leq  \frac{2 C \rho^{\ell + 2K-1} \cdot \rho^2 \gamma }{K  (\gamma - 1) ( 1 - \rho^2 \gamma )}\,.
\end{align*}
\end{lemma}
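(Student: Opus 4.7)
The plan is to show that $|p_k|$ and $|q_k|$ grow at most geometrically in $k$ with ratio $\gamma$ for $k \geq K+1$, and then evaluate the tails as geometric series. The conceptual difficulty is that the homogeneous part of the recurrence, $p_{k+1}\approx -2 p_k - p_{k-1}$, has characteristic polynomial $(\mu+1)^2$ with a double root at $\mu = -1$, so a naive triangle inequality applied to \eqref{eq:recurrence_p} would produce a spurious growth rate like $1+\sqrt{2}$, unrelated to $\gamma$. The remedy, which I would use throughout, is to simultaneously track the telescoped quantity $d_k := p_k + p_{k-1}$, which cancels the leading oscillation and satisfies the much better equation
\[
d_{k+1} = -d_k - \frac{\lambda}{k(k+\ell)} p_k.
\]

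\textbf{Geometric bounds on coefficients.} First I would prove by induction on $k \geq K+1$ that
\[
|d_k| \leq C\lambda \gamma^{k-K-1}, \qquad |p_k| \leq \frac{C\lambda\gamma^{k-K}}{\gamma-1}.
\]
The base case $k=K+1$ is exactly the hypothesis (using $\gamma \geq 1$ to absorb the single extra factor of $\gamma$ on the right). For the induction step, the triangle inequality applied to the identity above and to $p_{k+1} = d_{k+1} - p_k$ shows that the advancement by a factor of $\gamma$ reduces to the single scalar inequality $\frac{\lambda}{k(k+\ell)} \leq (\gamma-1)^2$ for $k \geq K+1$. This is where condition \eqref{eq:condition} enters: combining $\gamma > 1$ with the elementary bound $(K+2)(K+2+\ell) \leq 2(K+1)(K+1+\ell)$ (valid for any $K \geq 2$, $\ell \geq 0$), condition \eqref{eq:condition} implies $\frac{\lambda}{(K+1)(K+1+\ell)} \leq (\gamma-1)^2$, and the induction closes. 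For the derivatives, set $e_k := q_k+q_{k-1}$ and observe that \eqref{eq:recurrence_q} gives $e_{k+1} = -e_k - \frac{\lambda}{k(k+\ell)} q_k - \frac{p_k}{k(k+\ell)}$. The extra inhomogeneous term $\frac{p_k}{k(k+\ell)}$ is geometrically controlled by Step~1, and running the same coupled induction yields $|e_k| \leq C\gamma^{k-K-1}$ and $|q_k| \leq \frac{C\gamma^{k-K}}{\gamma-1}$ (possibly enlarging $C$). The inhomogeneous term contributes at the same order as $\epsilon_k q_k$, so the effective coefficient in the induction step becomes $\frac{2\lambda}{k(k+\ell)}$, which is exactly why the factor $2$ appears on the right-hand side of condition \eqref{eq:condition}.

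\textbf{Summing the tails.} Once the geometric bounds are in place, the four tail estimates are all straightforward geometric sums. For $P$, using $P(\rho) = \rho^\ell + \sum_{k\geq 1} \frac{p_k}{k(k+\ell)} \rho^{2k+\ell}$ and $\frac{1}{k(k+\ell)} \leq \frac{1}{K(K+\ell)}$ for $k \geq K+1$,
\[
\left| P(\rho) - \rho^\ell - \sum_{k=1}^K \frac{p_k}{k(k+\ell)} \rho^{2k+\ell} \right|
\leq \frac{C\lambda \rho^\ell}{K(K+\ell)(\gamma-1)} \sum_{k=K+1}^{\infty} \gamma^{k-K}\rho^{2k}
= \frac{C\lambda\,\rho^{\ell+2K}\rho^2\gamma}{K(K+\ell)(\gamma-1)(1-\rho^2\gamma)},
\]
as claimed, and absolute convergence for $\rho^2 \gamma < 1$ is automatic. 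For the derivative $P'$, the pointwise identity $\frac{2k+\ell}{k(k+\ell)} = \frac{1}{k} + \frac{1}{k+\ell} \leq \frac{2}{K}$ (valid for all $k \geq K+1$) replaces the denominator $K(K+\ell)$ by $K$ and introduces the overall factor $2$ that appears in the stated bound. The bounds for $Q$ and $Q'$ are identical, dropping the factor $\lambda$ and using the geometric bounds on $|q_k|$ and $|e_k|$ obtained above. The main obstacle, as already noted, is the coupled induction that handles the oscillation; once that is done, the rest is elementary bookkeeping of geometric series.
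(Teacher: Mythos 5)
Your proposal follows essentially the same route as the paper's proof: a coupled induction that propagates geometric bounds on both $p_k$ (resp.\ $q_k$) and the telescoped combination $p_k + p_{k-1}$ (resp.\ $q_k + q_{k-1}$), followed by a geometric-series estimate of the tails. The explicit naming of $d_k$ and $e_k$, the motivation via the double root of the characteristic polynomial at $-1$, and the observation that $\frac{2k+\ell}{k(k+\ell)} \le \frac{2}{K}$ all match the paper's argument, and you have correctly diagnosed that the inhomogeneous term $p_k/(k(k+\ell))$ in the $q$-recurrence is what produces the factor $2$ in \eqref{eq:condition}.

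Two points deserve repair. First, the scalar inequality that closes the $p$-induction step is $\frac{\lambda}{k(k+\ell)} \le \frac{(\gamma-1)^2}{\gamma}$, not $\frac{\lambda}{k(k+\ell)} \le (\gamma-1)^2$: you have dropped a factor of $1/\gamma$. Second, ``possibly enlarging $C$'' is not a legitimate escape hatch for the $q$-induction. The inductive step is homogeneous in $C$, so enlarging $C$ cannot cure a leaking induction, and the lemma's tail estimates must in any case hold with the original $C$. Carrying out the bookkeeping precisely, the $q$-induction requires $\frac{2\lambda}{(K+1)(K+1+\ell)} \le \frac{(\gamma-1)^2}{\gamma}$ (since the recurrence is applied starting at $k=K+1$), whereas \eqref{eq:condition} literally supplies only $\frac{2\lambda}{(K+2)(K+2+\ell)} \le \frac{(\gamma-1)^2}{\gamma}$. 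This one-index slack is also present in the paper's own proof, which silently replaces $(K+2)(K+2+\ell)$ by $K(K+\ell)$; it is harmless in the application (the computer-verified margins are enormous), but the cleaner fix is to state \eqref{eq:condition} with $(K+1)(K+1+\ell)$, not to enlarge $C$.
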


\begin{proof}
The proof is based on the propagation by induction of the bounds:
\begin{align} \label{eq:induction1}
|p_{K+1+j}| &\leq \frac{C \gamma \lambda}{\gamma - 1} \gamma^j\,, \qquad  |p_{K+1+j} + p_{K+j}| \leq C \lambda \gamma^j\,, \\
\label{eq:induction2}
|q_{K+1+j}| &\leq \frac{C \gamma}{\gamma - 1}\gamma^j\,,  \qquad |q_{K+1+j} + q_{K+j}|  \leq C \gamma^j \,.
\end{align}
At $j = 0$ those bounds are true by hypothesis. The bounds on the left for $p_{K+1+j}$ and $q_{K+1+j}$ trivially follow from the ones on the right and the previous induction step, using that $\frac{C \gamma }{\gamma - 1} \gamma^{j-1} + C\gamma^j = C \gamma^j \frac{\gamma}{\gamma - 1}$. Hence, we just need to show the bounds on the right using the induction hypothesis. We use the recurrences \eqref{eq:recurrence_p}--\eqref{eq:recurrence_q} to bound:
\begin{align*}
|p_{k+1} + p_k | &\leq | p_{k} + p_{k-1} | + \frac{\lambda }{k ( k + \ell )} |p_k|\,, \\
|q_{k+1} + q_k | &\leq | q_{k} + q_{k-1} | + \frac{\lambda }{k ( k + \ell )} |q_k| + \frac{1}{k (k + \ell ) } | p_k |\,.
\end{align*}
Now, we take $k = K + j$ and use the induction hypothesis to bound the right hand side:
\begin{align*}
|p_{K+j+1} + p_{K+j} | &\leq C \lambda \gamma^{j-1} + \frac{\lambda }{(K+j) ( K + j + \ell )} \frac{C \gamma \lambda}{\gamma - 1} \gamma^{j-1} \leq C \lambda \gamma^{j-1} \left( 1 + \frac{\lambda}{K (K+\ell)} \cdot \frac{\gamma}{\gamma - 1} \right)\,, \\
|q_{K+j+1} + q_{K+j} | &\leq C  \gamma^{j-1} + \frac{\lambda }{(K+j) ( K + j + \ell )} \frac{C \gamma}{\gamma - 1}\gamma^{j-1} + \frac{1}{(K+j) (K + j + \ell ) } \frac{C \lambda \gamma}{\gamma - 1} \gamma^{j-1} \\
&\leq C \gamma^{j-1} \left( 1 + \frac{\lambda }{K( K+ \ell )}  \frac{\gamma }{\gamma - 1} + \frac{\lambda}{K (K + \ell ) } \frac{\gamma }{\gamma - 1} \right)\,.
\end{align*}

Condition \eqref{eq:condition} tells us that $1 + \frac{2\lambda}{K(K+\ell)} \frac{\gamma }{\gamma - 1} \leq \gamma$, and hence, we conclude that 
\begin{align*}
|p_{K+j+1} + p_{K+j}| \leq C\lambda \gamma^j \quad \textup{ and } \quad |q_{K+j+1} + q_{K+j}| \leq C \gamma^j\,.
\end{align*}
Therefore, we close our induction argument, and this ends the proof of \eqref{eq:induction1}--\eqref{eq:induction2}. 

Moreover, from our bounds on $p_k,\ q_k$, and comparison with $\sum_k 1/k^2$, it is clear that the series converges uniformly on $\rho \leq 1/ \sqrt{\gamma}$. 

Finally, we derive our bounds for the tails. We directly notice that 
\begin{align*}
\left| \sum_{k = K+1}^\infty \frac{q_k}{k (k + \ell ) } \rho^{2k+\ell} \right| 
&\leq \frac{\rho^{\ell + 2K}}{K ( K + \ell ) } \, \frac{C}{\gamma - 1} \sum_{j = 1}^\infty  \gamma^{j} \rho^{2j}  \leq \frac{\rho^{\ell + 2K}}{K ( K + \ell ) } \, \frac{C}{\gamma - 1} \, \frac{\rho^2 \gamma}{  1 - \rho^2 \gamma }\,, \\
%
%
\left| \sum_{k = K+1}^\infty \frac{q_k (2k + \ell ) }{k (k + \ell ) } \rho^{2k + \ell - 1} \right| 
&\leq \frac{2 \rho^{\ell + 2K - 1}}{K} \cdot \frac{C}{\gamma - 1}  \sum_{j = 1}^\infty \gamma^{j} \rho^{2j} \leq \frac{2 C \rho^{\ell + 2K-1} }{K  (\gamma - 1) } \frac{\rho^2 \gamma}{1 - \rho^2 \gamma}.
\end{align*}
The bounds for $P$ follow in the exact same way, but carrying the extra factor of $\lambda$ from the bounds \eqref{eq:induction1}--\eqref{eq:induction2}
\end{proof}

\subsection{Proof of Proposition \ref{P.ComputerAssisted}} \label{subsec:ProofCAP}
Since our Legendre functions are implemented in terms of $\rho$ and $\lambda$ (instead of $a$ and $\nu$), we will work on those variables. To that end, we denote with a double subindex the Legendre function in $\lambda$-$\rho$ variables, that is 
$$ P_{\lambda, \ell} (\rho) := P_{-\frac12 + \frac12 \sqrt{1+\lambda}}^\ell \left( \frac{1-\rho^2}{1+\rho^2}\right)\,,
$$ 
where we recall that $\nu = -\frac12 + \frac12 \sqrt{1 + \lambda}$ (so that $\lambda = \nu (\nu+1)$) and $\cos (\theta) = \frac{1-\rho^2}{1+\rho^2}$ given that $\rho = \tan (\theta / 2)$.

We will be working on the box
\begin{equation} \label{eq:rho_lambda_box}
 \rho^I = [\rho^l, \rho^u] := 0.5947234806949_{31}^{70}, \qquad \lambda^I = [\lambda^l, \lambda^u] := 154.1915744945_{05}^{20}
 \end{equation}
It is a standard interval arithmetic computation to check that for any $\rho \in \rho^I$ and any $\lambda \in \lambda^I$ we have that $a = \frac{1-\rho^2}{1+\rho^2}$ and $\nu = -\frac12 +  \sqrt{\frac14 + \lambda}$ lie in the following intervals:
\begin{equation} \label{eq:a_nu_box}
a^I = [a^l, a^u] = 0.4774365682415_2^9, \qquad \nu^I = [\nu^l, \nu^u] = 11.92745245392_{25}^{32}
\end{equation}
We organize the proof as follows. The core of the proof is contained in the following Lemma, which is proved using an interval arithemtics computer assisted proof. The rest of the section explains how to conclude Proposition \ref{P.ComputerAssisted} from Lemma \ref{L.CAP}

\begin{lemma} The following items hold: \label{L.CAP} 
\begin{itemize}
\item[(i)] \hspace{-0.15cm} -- {\rm(CAP-B) } We have the inequalities
\begin{align} \label{eq:PM1}
P_{\lambda^l, 8} (\rho^u) > 0\,, \qquad P_{\lambda^u, 8} (\rho^l) < 0\,, \qquad \frac{\dd}{\dd \rho} P_{\lambda, 8} (\rho) < 0\,,\;\; \forall\ (\lambda, \rho) \in \lambda^I \times \rho^I\,, \\ \label{eq:PM2}
P_{\lambda^l, 0}' (\rho^u) < 0\,, \qquad P_{\lambda^u, 0}' (\rho^l) > 0\,, \qquad \frac{\dd}{\dd \lambda} P_{\lambda, 0}' (\rho) < 0, \;\; \forall\ (\lambda, \rho) \in \lambda^I \times \rho^I\,.
\end{align}
\item[(ii)] \hspace{-0.15cm} -- {\rm (CAP-AD) } Let $\lambda^{\rm{aux}} := \frac{1541914}{10000}$. Then:
\begin{itemize}
    \item[$\circ$] For any $\lambda \in [2, \lambda^{\rm{aux}}]$ and every $\rho \in \rho^I$, we have that $P_{\lambda, 8} (\rho) >  0\,.$
    \item[$\circ$]  For any $\lambda \in [\lambda^{\rm{aux}}, \lambda^u]$ and every $\rho \in \rho^I$ we have that $Q_{\lambda, 8} (\rho) < 0\,.$
\end{itemize}

\item[(iii)] \hspace{-0.15cm} -- {\rm (CAP--LN) } For every $\rho \in \rho^I$, it follows that:
\begin{align*}
P_{12, 0}' (\rho) < 0 < P_{13, 0}' (\rho)\,, \qquad 
P_{42, 0}' (\rho) > 0 > P_{43, 0}' (\rho)\,, \qquad
P_{89, 0}' (\rho) < 0 < P_{90, 0}' (\rho)\,. 
\end{align*}
\item[(iv)] \hspace{-0.15cm} -- {\rm (CAP-T) } For every $(\rho, \lambda) \in \rho^I \times \lambda^I$, we have
\begin{equation} \label{eq:paris}
\ \frac{4 \lambda}{(1 + \rho^2)^2} P_{\lambda, 0} ( \rho ) Q_{\lambda, 8} (\rho) + P_{\lambda, 8}' (\rho ) Q_{\lambda, 0}' (\rho ) \neq 0\,, \qquad Q_{\lambda, 8} (\rho ) \neq 0\,, \qquad Q_{\lambda, 0}' (\rho ) \neq 0 \,.
\end{equation}
\item[(v)] \hspace{-0.15cm} -- {\rm (CAP-NR) }  For every $(\rho, \lambda) \in \rho^I \times \lambda^I$, it follows that $ P_{\lambda, 0} (\rho) \neq 0$.
\end{itemize}
\end{lemma}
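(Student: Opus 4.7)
All five items of Lemma \ref{L.CAP} are sign (or non-vanishing) statements for explicit algebraic expressions in the Legendre quantities $P_{\lambda,\ell}(\rho)$, $P'_{\lambda,\ell}(\rho)$, $Q_{\lambda,\ell}(\rho)$, $Q'_{\lambda,\ell}(\rho)$ with $\ell\in\{0,8\}$. The plan is to verify each one with a computer-assisted check that uses the rigorous interval-arithmetic implementation set up in Subsection \ref{subsec:legendre}.

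The core subroutine takes intervals $\rho^I,\lambda^I$ and returns enclosures of the four quantities above. It proceeds in three steps: (1) evaluate the Taylor coefficients $p_1,\dots,p_K$ and $q_1,\dots,q_K$ (with $K=100$) as intervals via the recurrences \eqref{eq:recurrence_p}--\eqref{eq:recurrence_q}, starting from the exact initial values and performing every arithmetic operation in interval arithmetic; (2) evaluate the truncated Taylor polynomial at $\rho^I$ using a Horner-type scheme, and multiply by the outer factor $\rho^\ell$ (or differentiate term-by-term for the primed versions); (3) choose an admissible $\gamma>1$ with $\rho^2\gamma<1$ satisfying \eqref{eq:condition}, extract the constant $C$ from the already computed enclosures of $p_K,p_{K+1},q_K,q_{K+1}$, and add the symmetric tail interval furnished by Lemma \ref{L.tail}. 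Since $\rho^u<0.6$ and $\lambda\le 155$ (for items (i), (iii), (iv), (v)), the required $\gamma$ can be chosen safely away from both constraints, so the tail contribution will be extremely small for $K=100$.

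For items (i), (iii), (iv) and (v) the verification reduces to a handful of direct calls to this subroutine on the narrow boxes $\rho^I\times\lambda^I$ or $\rho^I\times\{\lambda_0\}$ that appear in each statement. Because these boxes have width of order $10^{-12}$--$10^{-11}$ in each variable, a single evaluation should produce an enclosure tight enough to read off the desired strict sign. The monotonicity clauses of (i) (signs of $\p_\rho P_{\lambda,8}$ and $\p_\lambda P'_{\lambda,0}$ on the whole box) are also single sign checks, after which the corner evaluations propagate to every $\rho\in\rho^I$ and every $\lambda\in\lambda^I$; the non-vanishing and transversality inequalities in (iv) are handled the same way, noting that the first expression in \eqref{eq:paris} is a bounded interval-arithmetic combination of already implemented quantities.

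The main obstacle is item (ii), where $\lambda$ ranges over a wide interval, most significantly $[2,\lambda^{\mathrm{aux}}]$, of length about $152$. A single interval call on such a range is hopeless: dependency blow-up, together with the fact that the Taylor coefficients grow polynomially in $\lambda$, forces the enclosure to become meaningless. The plan is to perform adaptive bisection in $\lambda$: starting from the full range, recursively split into two halves whenever the enclosure of $P_{\lambda,8}(\rho^I)$ (respectively $Q_{\lambda,8}(\rho^I)$ on the narrow range $[\lambda^{\mathrm{aux}},\lambda^u]$) fails to have a definite sign. Since the true function $\lambda\mapsto P_{\lambda,8}(\rho)$ vanishes only near $\lambda=\lambda^{\mathrm{aux}}$ for $\rho\in\rho^I$, the bisection tree should refine only close to that value, keeping the total number of leaves manageable. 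Rigor of the whole argument follows from the inclusion property of interval arithmetic and the explicit tail bound of Lemma \ref{L.tail}.
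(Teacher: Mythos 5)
Your treatment of items (i), (iii), (iv) and (v) matches the paper: each reduces to a handful of single-call interval-arithmetic evaluations on the narrow box $\rho^I\times\lambda^I$ (or at the isolated integer values of $\lambda$ in (iii)), using the truncated Taylor series together with the tail bound of Lemma~\ref{L.tail}. Whether the coefficients $p_k,q_k$ are stored directly as interval numbers or first computed symbolically as degree-$k$ polynomials in $\lambda$ and evaluated afterwards is an implementation detail and does not affect the argument.

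The gap is in item (ii), first bullet. You propose pure adaptive bisection in $\lambda$ over $[2,\lambda^{\mathrm{aux}}]$, with the leaf check being a direct interval evaluation of $P_{\lambda^V,8}(\rho^I)$ over a subinterval $\lambda^V$. But $\lambda^{\mathrm{aux}}=154.1914$ lies only about $1.7\times 10^{-4}$ below the true crossing $\lambda_\star\in\lambda^I$, so $P_{\lambda,8}(\rho)\to 0$ as $\lambda\uparrow\lambda^{\mathrm{aux}}$. Near that endpoint the function value is tiny, while the dependency overestimate incurred by interval-evaluating the degree-$100$ Taylor polynomials $p_k(\lambda)$ over a subinterval $\lambda^V$ does not shrink proportionally; so the direct enclosure may fail to be sign-definite there no matter how finely you bisect, and your observation that ``the bisection tree should refine only close to that value'' does not by itself guarantee termination at bounded depth. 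The paper addresses this with a centered-form (mean-value) check that you omit: on a subinterval $\lambda^V=[\lambda^-,\lambda^+]$, it computes $\alpha\supset P_{\lambda^m,8}(\rho^I)$ at the midpoint $\lambda^m$ (a point, hence no $\lambda$-dependency) and $\beta\supset Q_{\lambda^V,8}(\rho^I)$, and then verifies $0\notin\alpha+[-r,r]\cdot\beta$ with $r>\tfrac12(\lambda^+-\lambda^-)$; by the intermediate/mean value theorem this rules out a zero of $P_{\cdot,8}(\rho)$ on $\lambda^V$. The crucial gain is that the dependency error in $\beta$ is damped by the small radius $r$, so the sign certificate survives much closer to $\lambda^{\mathrm{aux}}$ than direct evaluation would; only on top of this refined check does the paper run branch-and-bound, and then the tolerance $10^{-6}$ is never reached. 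You should build this first-derivative centered form into your subroutine for (ii) rather than relying on direct evaluation plus bisection alone.
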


\begin{proof}
The proof is computer-assisted, and the code is in file \url{CAP\_V6\_SubmissionVersion.ipynb} in the TeX source of our arXiv version. The main part of the code is the rigorous implementation of interval arithmetic version of the Legendre functions of the first kind and their derivatives with respect to $\lambda$ and $\rho$. We compute the first $K = 100$ terms of the series symbolically using \eqref{eq:recurrence_p}--\eqref{eq:recurrence_q}. We store $p_n$, $q_n$ as polynomials on $\lambda$ of degree $n$. Then, the implementation is based on Lemma \ref{L.tail}, using standard interval arithmetics to compute the truncated series and adding the upper bound on the tail contribution given by Lemma \ref{L.tail}. We set $\gamma = 3/2$ and we check condition \eqref{eq:condition} and $\rho^2 \gamma < 1$ on every call to our Legendre function. Thus, we end up with an rigorous implementation of $P_{\lambda^V, \ell} (\rho^V)$, $Q_{\lambda^V, \ell} (\rho^V)$ and their first derivative with respect to $\rho$. The functions accept intervals for $\lambda^V$ and $\rho^V$ and return an interval \textit{which is rigorously guaranteed to contain the result} for any $\rho \in \rho^V$ and any $\lambda \in \lambda^V$.

Then, the proof of all items except the first subitem of $(ii)$ becomes immediate. All of them are direct applications of the implementation described above, checking that the resulting interval has the corresponding sign. Whenever one of the inputs is a number (such as $\rho^l$ or $\rho^u$) we simply create an interval which contains its actual value and use our interval-arithmetic implementation of the corresponding function.

Regarding the first subitem of $(ii)$, the interval $[2, \lambda^{\rm{aux}}]$ is too large to obtain a sign if we simply evaluate $P_{[2, \lambda^{\rm{aux}}], 8} (\rho^I)$. In order to tackle that interval, we exploit our implementation of the derivative with respect to $\lambda$. Whenever we want to check if $P_{\lambda^V, 8} (\rho^I)$ contains zero for some interval $\lambda^V = [\lambda^-, \lambda^+]$, we operate as follows. We define $\lambda^m$ containing $\frac{\lambda^- + \lambda^+}{2}$, we define $r > \frac{\lambda^+ - \lambda^-}{2}$, and we use our Legendre functions implementations to compute $\alpha \supset P_{\lambda^m, 8}(\rho^I)$, and $\beta \supset  Q_{\lambda^V,8} (\rho^I)$. By Intermediate Value theorem, if there is some $\lambda \in \lambda^V$ and $\rho \in \rho^I$ with $P_{\lambda, 8}(\rho) = 0$, we would have $\lambda' \in \lambda^V$ such that $P_{\lambda^m, 8} (\rho) + (\lambda - \lambda^m)  Q_{\lambda',8}(\rho) = 0$. Hence, in order to show that there are no zeros in the interval, it suffices to check that $0 \notin \alpha + [-r, r] \cdot \beta$.

Such improvement is still far from dealing with the whole interval $[2, \lambda^{\text{aux}}]$. We combine it with a standard branch-and-bound method to check the condition. For each interval $\lambda^V$, we try to check the condition above, and, if we fail, we split the interval in two halves and check each. The program stops when the size of the interval reaches a tolerance of $10^{-6}$. For our problem, such tolerance is never reached.

All items but the first subitem of $(ii)$ take a few seconds in a standard laptop CPU, while the first part of $(ii)$ takes 126 seconds. We remark that it would be fairly straightforward to make this verification much faster by implementing rigorous higher order derivatives of $P_\lambda^\ell(\rho)$ with respect to $\lambda$ and using Taylor's Theorem as a substitute of our Intermediate Value Theorem. However, since our computing times are already very small we opted to keep a simpler code, and a simpler version of Lemma \ref{L.tail} instead of optimizing code performance.
\end{proof}

\subsubsection{Proof of \texorpdfstring{\eqref{E.B}}{E.B}}

We start proving condition \eqref{E.B} from Proposition \ref{P.ComputerAssisted}. As explained in the introduction, the main ingredient is the application of Poincar\'{e}-Miranda theorem \cite{Miranda}.

\begin{theorem}[$2d$ -- Poincar\'e-Miranda Theorem] \label{T.PM} Let $x^l, x^u, y^l, y^u \in \mathbb R$ and define the intervals $I = [x^l, x^u]$, $J = [y^l, y^u]$. Assume that $f_1, f_2 : I \times J \to \mathbb R^2$ are continuous, and that there exist sign choices $\sigma_1, \sigma_2 \in \{ -1, +1 \}$:
\begin{itemize}
\item[$\circ$] $\textup{sign}(f_1 (x^l, y)) = \sigma_1$ and $\textup{sign}(f_1 (x^u, y)) = -\sigma_1\,$ for all $y \in J\,,$
\item[$\circ$] $\textup{sign}(f_2(x, y^l )) = \sigma_2$ and $\textup{sign}(f_2(x, y^u )) = -\sigma_2\,$ for all $x\in I\,.$
\end{itemize}
Then, there exists $(x, y) \in I\times J$ such that $f_1 (x, y) = f_2 (x, y) = 0$. 
\end{theorem}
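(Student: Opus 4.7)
The plan is to deduce Theorem \ref{T.PM} from Brouwer's fixed point theorem, in its equivalent formulation as the non-existence of a continuous retraction from a closed convex disk onto its boundary. By an affine change of variables in each coordinate, one may assume $I = J = [-1,1]$, so that the domain is the standard square $Q := [-1,1]^2$. By replacing each $f_i$ with $-f_i$ if necessary, one may further assume $\sigma_1 = \sigma_2 = +1$; under these normalisations $f_1 > 0$ on the left edge $\{-1\} \times [-1,1]$ and $f_1 < 0$ on the right edge, while $f_2 > 0$ on the bottom edge and $f_2 < 0$ on the top edge.

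Suppose by contradiction that $F := (f_1, f_2)$ does not vanish on $Q$. For each $p \in Q$ I would consider the ray $\{p - t F(p) : t \geq 0\}$ emanating from $p$ in the direction $-F(p)$. Since $Q$ is convex and compact and $F(p) \neq 0$, this ray exits $Q$ at a unique point $g(p) := p - t_\star(p) F(p)$, where $t_\star(p) \geq 0$ is the largest parameter for which the ray still lies in $Q$. The continuity of $F$ together with the uniform lower bound $|F| \geq c > 0$ on $Q$ (from compactness and the contradictory hypothesis) make $g : Q \to \partial Q$ a continuous map. Next, one checks that the sign conditions force $g$ to fix the boundary: if for instance $p$ belongs to the left edge, then $f_1(p) > 0$ implies that $-F(p)$ has strictly negative first component and thus points immediately out of $Q$, forcing $t_\star(p) = 0$ and $g(p) = p$. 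The analogous argument on each of the other three edges shows that $g|_{\partial Q} = \mathrm{id}_{\partial Q}$, so $g$ is a continuous retraction of $Q$ onto $\partial Q$, contradicting the no-retraction theorem. Hence $F$ must vanish somewhere in $Q$, which is precisely the conclusion sought.

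The main obstacle is the continuity of $g$, in particular at interior points whose ray in direction $-F$ passes through a corner of $Q$, and at points $p$ approaching $\partial Q$. These are however routine convex-geometry verifications: the exit parameter $t_\star(p)$ depends continuously on $p$ whenever $F(p) \neq 0$, and the corners of $Q$ cause no trouble because both coordinate sign hypotheses simultaneously force any arriving ray to stop precisely at the corner. Beyond these elementary checks, the argument is self-contained modulo Brouwer's theorem, which can be invoked as a black box.
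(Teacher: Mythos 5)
The paper does not actually prove Theorem \ref{T.PM}: it states the $2d$ Poincar\'e--Miranda theorem and cites Miranda's 1940 note \cite{Miranda} for it, so there is no in-paper proof to compare against. Your argument is a valid self-contained proof and is consistent with the standard literature: Miranda's original note establishes the equivalence of the Poincar\'e--Bolzano statement with Brouwer's fixed point theorem, and you likewise reduce to Brouwer, in its no-retraction incarnation, by building a retraction $g:Q\to\partial Q$ out of the (hypothetically) nowhere-vanishing field $F=(f_1,f_2)$.

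The one place you flag as delicate, the continuity of the exit map $g(p)=p-t_\star(p)F(p)$, is indeed the crux, and your sketch of why it is routine is sound. It is worth recording the clean two-part argument, since naively $t_\star$ could be discontinuous for a ray grazing $\partial Q$ tangentially. Upper semicontinuity of $t_\star$ follows from closedness of $Q$: if $p_n\to p$ and $t_\star(p_n)\to T$, then $p_n - t_\star(p_n)F(p_n)\in Q$ passes to the limit and gives $T\le t_\star(p)$. Lower semicontinuity splits into cases: for $p\in\operatorname{int}Q$, convexity gives $p - tF(p)\in\operatorname{int}Q$ for every $0\le t<t_\star(p)$, which is an open condition and hence survives perturbation; for $p\in\partial Q$, your sign analysis forces $t_\star(p)=0$, so lower semicontinuity is vacuous. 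The problematic tangential scenario is exactly ruled out by the sign conditions on the four edges, which is the structural reason the hypotheses of Poincar\'e--Miranda are what they are. With that, $g$ is a continuous retraction of $Q$ onto $\partial Q$, contradicting Brouwer, and the conclusion follows. An alternative and equally standard route is to apply Brouwer's fixed point theorem directly to the map $p\mapsto p - \varepsilon\,\Pi(F(p))$ for a suitable normalisation $\Pi$ and small $\varepsilon$, and show that the sign conditions make this map $Q\to Q$; both approaches buy the same thing, and yours is perfectly acceptable.
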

 Looking at Figure \ref{fig:close} the condition can be written in words as asking that $f_1$ alternates sign between the two orange segments, and $f_2$ alternates sign between the green segments.

Now, we will see that \eqref{eq:PM1}--\eqref{eq:PM2} allow us to apply Poincar\'e-Miranda. Specifically, from \eqref{eq:PM1}, we obtain:
\begin{equation} \label{eq:palau1}
P_{\lambda^l, 8} (\rho) > 0, \qquad P_{\lambda^u, 8}(\rho) < 0, \qquad \forall\ \rho \in \rho^I\,.
\end{equation}
Similarly, from \eqref{eq:PM2}, we deduce
\begin{equation} \label{eq:palau2}
P_{\lambda, 0}' (\rho^u) < 0, \qquad P_{\lambda, 0}'(\rho^l) > 0, \qquad \forall\ \lambda \in \lambda^I\,.
\end{equation}
Equations \eqref{eq:palau1}--\eqref{eq:palau2} are exactly the conditions needed to apply Poincar\'e-Miranda theorem. We conclude the existence of $(\rho_\star, \lambda_\star) \in \rho^I \times \lambda^I$ solving the system
\begin{equation}
P_{\lambda_\star, 8}(\rho_\star) = 0, \qquad \mbox{ and } \qquad P_{\lambda_\star, 0}'(\rho_\star) = 0.
\end{equation}
Lastly, we express this in terms of $a_\star = \frac{1-\rho_\star^2}{1+\rho_\star^2} \in a^I$ and $\nu_\star = -\frac12 + \frac12 \sqrt{1 + \lambda_\star} \in \nu^I$. From our discussion at the beginning of the section, this means that $\lambda_\star = \nu_\star (\nu_\star + 1)$ is both a zonal Neumann eigenvalue of $\Omega_a$ and a Dirichlet $8$-fold eigenvalue. That is, we conclude that there exists $a_\star \in [a_1, a_2]$, $m_\star, n_\star \in \mathbb N$ such that
\begin{equation}
\mu_{0, m_\star}(a_\star) = \lambda_{8, n_\star}(a_\star) \in \lambda^I\,.
\end{equation}

This concludes the proof of first item in Proposition \ref{P.ComputerAssisted}, except for the facts that $n_\star = 0$ and $m_\star \geq 4$, which we prove in the following.

We start showing $n_\star = 0$. First, we notice that $\lambda_{8, 0}(a_\star) > \lambda_{0, 0}(a_\star)$, where $\lambda_{0, 0}(a_\star)$ is the smallest eigenvalue of the Dirichlet Laplacian on $\Omega_{a_\star}$. From the variational characterization of the first eigenfunction of the Laplacian, and the fact that $\Omega_{a_\star} \subset \Omega_0 = \mathbb{S}^2_{+}$, we have that $\lambda_{0, 0}(a_\star) \geq \lambda_{0, 0}(0) = \lambda_1(\mathbb{S}_{+}^2)$. The first eigenvalue of the Dirichlet Laplacian on the half-sphere $\mathbb{S}^2_+$ is known to be $2$ (for example, see \cite[Proposition 4.5]{CM}). Hence, $\lambda_{8, 0}(a_\star) > 2$.

From the first part of item $(ii)$ of Lemma \ref{L.CAP}, we know that there are no Dirichlet eigenvalues with $\ell = 8$ and $\lambda \in [2, \lambda^{\rm{aux}}]$. Hence, $\lambda_{8, 0}(a_\star) \geq \lambda^{\rm{aux}}$. Lastly, from the second item of Lemma \ref{L.CAP} $(ii)$, combined with Rolle's Theorem, we see that $P_{\lambda, 8} (\rho_\star ) = 0$ can only happen at most once for $\lambda \in [\lambda^{\rm{aux}}, \lambda^u]$. Since it happens once (for $\lambda = \lambda_\star$), we conclude that $\lambda_\star$ is the first Dirichlet eigenvalue with $\ell = 8$, and hence $n_\star = 0$ and $\lambda_\star = \lambda_{8, 0}(a_\star)$.

Lastly, we show that $m_\star \geq 4$ using Lemma \ref{L.CAP} $(iii)$. By continuity of $P_{\lambda, 0}'(\rho_\star)$ with respect to $\lambda$ and the sign conditions from Lemma \ref{L.CAP} $(iii)$, we see that there is at least one Neumann zonal eigenvalue on each of the following intervals: $[12, 13]$, $[42, 43]$, $[89, 90]$. Hence, since $\lambda_\star \in [154, 155]$, we see that $\lambda_\star$ is at least the fourth zonal Neumann eigenvalue of $\Omega_{a_\star}$. Moreover, we recall that in our convention $\mu_{0, 0}(a_\star) = 0$ corresponding to the constant Neumann eigenfunction of eigenvalue $0$. Hence, we obtain $m_\star \geq 4$.

\subsubsection{Proof of \texorpdfstring{\eqref{E.T}}{E.T}}

Here, we prove the transversality condition, namely the second item in Proposition \ref{P.ComputerAssisted}. For any zonal Neumann eigenfunction, and taking $a = a(\rho): = \frac{1-\rho^2}{1+\rho^2}$, we have the boundary condition 
$$
P_{\mu_{0, m} (a) , 0}'(\rho) = 0\,.
$$ 
Assuming $Q_{\mu_{0, m_\star}(a), 0}'(\rho) \neq 0$, and taking derivative with respect to $\rho$, it follows that
\begin{equation*}
\frac{\dd a}{\dd\rho } \mu_{0, m_\star} '(a) Q_{\mu_{0, m_\star}(a), 0 }'(\rho) + P_{\mu_{0, m_\star}(a), 0}''(\rho) = 0\,,
\end{equation*}
and so that
\begin{equation*}
\mu_{0, m_\star}'(a) = \left( \frac{\dd a}{\dd \rho} \right)^{-1} \cdot \frac{- P_{\mu_{0, m_\star}(a), 0}''(\rho) }{Q_{\mu_{0, m_\star}(a), 0}'(\rho)}\,.
\end{equation*}
Moreover, we can use the ODE \eqref{eq:algiers} to express the second derivative of $P$ in terms of zeroth and first order derivatives, obtaining that
\begin{equation} \label{eq:berlin1} 
\mu_{0, m_\star}'(a) = \left( \frac{\dd a}{\dd \rho}\right)^{-1} \frac{ \frac{ P_{\mu_{0, m_\star} (a), 0}'(\rho) }{\rho} + \frac{4 \mu_{0, m_\star} (a)}{(1+\rho^2)^2} P_{\mu_{0, m_\star} (a) , 0} (\rho)  }{Q_{\mu_{0, m_\star} (a), 0}'(\rho)} =  \left( \frac{\dd a}{\dd\rho}\right)^{-1} \,\frac{4 \mu_{0, m_\star} (a) }{ (1+\rho^2)^2}\, \frac{ P_{\mu_{0, m_\star} (a) , 0} (\rho)  }{Q_{\mu_{0, m_\star} (a), 0}'(\rho)}\,.
\end{equation}
Similarly, using the boundary condition
$$
P_{\lambda_{8,0}(a),8}(\rho) = 0\,,
$$
we get that
\begin{equation} \label{eq:berlin2}
\lambda_{8,0} ' (a) = - \left( \frac{\dd a}{\dd\rho}\right)^{-1} \, \frac{ P_{\lambda_{8,0} (a), 8}'(\rho )}{Q_{\lambda_{8, 0} (a), 8}(\rho)}\,.
\end{equation}

For our case of $\mu_{0, m_\star}(a_\star) = \lambda_{8, 0}(a_\star) = \lambda_\star \in \lambda^I$ and $\rho = \rho_\star \in \rho^I$, we know that the denominators $Q_{\lambda_\star, 0}' (\rho_\star )$ and $Q_{\lambda_\star,8} (\rho_\star )$ do not vanish due to the second and third inequalities in Lemma \ref{L.CAP} $(iv)$. Moreover, using \eqref{eq:berlin1}--\eqref{eq:berlin2}, we conclude $\mu_{0, m_\star}' (a_\star) \neq \lambda_{8, 0}' (a_\star)$ directly from the first inequality in Lemma \ref{L.CAP} $(iv)$.

\subsubsection{Proof of \texorpdfstring{\eqref{E.NR}}{E.NR}}

The reduction of condition \eqref{E.NR} to the computer-assisted check given in Lemma \ref{L.CAP} $(v)$ follows from standard ordering properties of the eigenvalues for the Dirichlet Laplacian. We recall those properties of $\{\lambda_{\ell,n}\}_{\ell,n=0}^\infty$, which are a direct consequence of the Courant-Fischer min-max characterization of eigenvalues:

\begin{itemize}
    \item For a fixed angular mode $\ell$, the eigenvalues are strictly increasing with the index $n$: $\lambda_{\ell, n} < \lambda_{\ell, n+1}$. This is because the $(n+1)$-th eigenvalue is found by maximizing over a space of higher dimension than the $n$-th.
    \item For a fixed index $n$, the eigenvalues are strictly increasing with the angular mode $\ell$: $\lambda_{\ell, n} < \lambda_{\ell+1, n}$. This is because the energy functional being minimized (the Rayleigh quotient) contains a term proportional to $\ell^2$, which increases the minimum value for larger $\ell$.
\end{itemize}

Therefore, we have that
$$ \lambda_{8, 0}(a) \leq \lambda_{8j, 0}(a) \leq \lambda_{8j, n'}(a)$$
for any $j \geq 1$, $n' \geq 0$, and both equalities hold if and only if $(j, n') = (1, 0)$. This shows that $\lambda_{8, 0}(a) \neq \lambda_{\ell', n'}(a)$ for any $\ell' \in 8 \mathbb N$, $n' \in \mathbb N$, $(\ell', n') \neq (\ell, n)$, except for the cases $\ell' = 0$, which are handled separately.

In order to deal with the remaining case $\ell' = 0$, we just need to check that $\lambda_{8, 0}(a_\star) = \lambda_\star$ is not a zonal Dirichlet eigenvalue of $\Omega_{a_\star}$. In other words, we need to check that $P_{\lambda_\star, 0} (\rho_\star) \neq 0$, which is guaranteed by Lemma \ref{L.CAP} $(v)$.

\section{The bifurcation argument} \label{S.bifurcation}

In this section, assuming slightly more general conditions than \eqref{E.B}, \eqref{E.T} and \eqref{E.NR}, we are going to prove the existence of nontrivial simply connected domains $\Omega \subset \mathbb{S}^2$ for which \eqref{E.OverdeterminedM} admits a nontrivial solution. The proof of this result is independent of Proposition \ref{P.ComputerAssisted}, and it is not computer-assisted. 

Before going further, let us recall that we denote the Neumann spectrum of $\Omega_a$ by $\{\mu_{\ell,n}(a)\}_{\ell,n=0}^\infty$, and the Dirichlet spectrum by $\{\lambda_{\ell,n}(a)\}_{\ell,n=0}^\infty$. Also, for simplicity, we denote the zonal part of the corresponding Neumann and Dirichlet eigenfunctions (whose dependence on $a$ we omit notationally) by $\{\psi_{\ell,n}\}_{\ell,n=0}^\infty$ and $\{\varphi_{\ell,n}\}_{\ell,n=0}^\infty$, respectively. In other words, for nonnegative integers $\ell,m$ and $n$ with $\ell \geq 1$, we set 
\begin{equation} \label{E.eigenfunctionspsiphi}
 \psi_{0,n}(\theta) := P_{\nu_{0,n}, }^0(\cos(\theta))\,,\quad \psi_{\ell,n} (\theta):= P_{\nu_{\ell,n+1}}^\ell(\cos(\theta))\,, \quad \textup{and} \quad  \varphi_{m,n}(\theta) := P_{\kappa_{m,n+1}}^m(\cos(\theta))\,. 
\end{equation}

Also, we set $\widetilde{a}:= \arccos(a) \in (0,\pi/2)$ for all $a \in (0,1)$, and point out that in spherical coordinates
$$
\Omega_a = \big\{(\theta,\phi) \in [0,\pi] \times \TT: \theta < \widetilde{a} \big\}\,,
$$
with the usual identification where $\theta = 0$ denotes the north pole irrespectively of $\phi \in \TT$. Then, given a constant $a \in (0,1)$ and a function $b \in C^{2,\alpha}(\TT)$ satisfying
$$
\|b\|_{L^\infty(\TT)} < \frac{\ta}{10}\,,
$$
we consider the deformed domain defined in spherical coordinates given by
\begin{equation} \label{E.Omab}
\Omega_{a}^b := \big\{(\theta,\phi) \in [0,\pi] \times \TT : \theta < \widetilde{a}+b(\phi) \big\}\,.
\end{equation}

The rest of this section is devoted to prove the following:

\begin{theorem} \label{T.bifurcation}
    Let $\ell \geq 4$, $m \geq 1$ and $n \geq 0$. Assume that there exists $a_\star \in (0,1)$ such that:
    \begin{itemize}
        \item[$(i)$] $\mu_{0,m}(a_\star) = \lambda_{\ell,n}(a_\star)\quad$ (Bifurcation)
        \item[$(ii)$] $\mu'_{0,m}(a_\star) \neq \lambda'_{\ell,n}(a_\star)\quad $ (Transversality)
        \item[$(iii)$] $\lambda_{\ell,n}(a_\star) \neq \lambda_{\overline{m}\ell,\overline{n}}(a_\star)$ \textup{ for all nonnegative integers} $\overline{m}, \overline{n}$ with $(\overline{m},\overline{n}) \neq (1,n) \quad$ (Nonresonance)  
    \end{itemize}
Then, there exist some $s_\star > 0$ and a continuously differentiable curve
$$
\big\{(a_s, b_s): s \in (-s_\star,s_\star),\ (a_0,b_0) = (a_\star,0) \big\} \subset (0,1) \times C^{2,\alpha}(\TT)\,,
$$
with
\begin{equation} \label{E.bs}
b_s(\phi) = - s\,\, \frac{ \varphi_{\ell,n}'(\widetilde{a}_\star) }{\psi_{0,m}''(\widetilde{a}_s)}\, \cos(\ell \phi) + o(s)\,,
\end{equation}
such that the overdetermined problem
\begin{equation} \label{E.overdeterminedCR}
\left\{
\begin{aligned}
\ & \Delta u_s +\mu_{0,m}(a_s) u_s = 0 && \textup{in } \Omega_{a_s}^{b_s}\,,\\
& u_s = \psi_{0,m}(\ta_s)\,, \quad \partial_{\nu} u_s  = 0 \quad&& \textup{on } \partial \Omega_{a_s}^{b_s}\,,
\end{aligned}
\right.
\end{equation}
admits a nonconstant solution for every $s \in (-s_\star,s_\star)$. Moreover, both the solution $u_s$ and the boundary of the domain $\Omega_{a_s}^{b_s}$ are analytic. 
\end{theorem}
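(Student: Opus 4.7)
The plan is to apply the Crandall--Rabinowitz bifurcation theorem \cite{CR} to a nonlinear operator that encodes \eqref{E.overdeterminedCR}, bifurcating off the trivial branch $\{(a,0):a\text{ near }a_\star\}$, on which $u=\psi_{0,m}$ is a solution by the zonal symmetry of $\Omega_a$. I would proceed in three steps: pullback to a fixed reference domain, setup of an abstract functional equation, and verification of the hypotheses of the theorem.

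First, I fix $\mathbb D:=\Omega_{a_\star}$ and construct, for $(a,b)$ near $(a_\star,0)$, a smooth diffeomorphism $\Phi_{a,b}:\mathbb D\to\Omega_a^b$ that equals the identity on a neighborhood of the north pole and coincides with the radial rescaling $\theta\mapsto\theta(\ta+b(\phi))/\ta_\star$ near $\partial\mathbb D$. The boundary-localized form is essential: the contractibility of $\Omega_a^b$ precludes the globally radial diffeomorphisms used in the annular settings of \cite{FMW,EFRS}, since any such map would develop a singularity at the pole. Setting $u=v\circ\Phi_{a,b}^{-1}$, the overdetermined problem translates on the fixed domain $\mathbb D$ into a pulled-back interior equation $\mathcal L(a,b)v+\mu_{0,m}(a)v=0$, a Dirichlet-type condition $v|_{\partial\mathbb D}=\psi_{0,m}(\ta)$, and a pulled-back Neumann condition $\mathcal N(a,b)v=0$ on $\partial\mathbb D$. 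Writing $v=\psi_{0,m}(\theta)+\chi_a(\theta)+w$ with $\chi_a$ an explicit $a$-dependent zonal correction absorbing the Dirichlet data so that $w|_{\partial\mathbb D}=0$, and following \cite{FMW,EFRS}, I work in the anisotropic H\"older spaces
$$
\mathcal X:=\{w\in C^{2,\alpha}(\overline{\mathbb D}):\partial_\phi w\in C^{2,\alpha}(\overline{\mathbb D}),\ w|_{\partial\mathbb D}=0\},\qquad \mathcal Y:=\{b\in C^{2,\alpha}(\T):b'\in C^{2,\alpha}(\T)\},
$$
restricted to the subspaces of functions even in $\phi$ and $2\pi/\ell$-periodic; this extra azimuthal derivative avoids the loss of derivatives stemming from differentiating in $b$. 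The nonlinear map
$$
F(a,b,w):=\bigl(\mathcal L(a,b)v+\mu_{0,m}(a)v,\ \mathcal N(a,b)v|_{\partial\mathbb D}\bigr)
$$
is then defined on a neighborhood of $(a_\star,0,0)$ in $\mathbb R\times\mathcal Y\times\mathcal X$ and takes values in $C^{0,\alpha}(\overline{\mathbb D})\times C^{1,\alpha}(\T)$; it is smooth, and the zonal symmetry gives $F(a,0,0)=0$ for $a$ near $a_\star$.

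Next, I would verify the Crandall--Rabinowitz hypotheses at $(a_\star,0,0)$. Because the reference domain is zonally symmetric, the linearization $L_\star:=D_{(b,w)}F(a_\star,0,0)$ is block-diagonal in the Fourier modes $\cos(k\ell\phi)$, $k\geq 0$, and on each block reduces to a second-order ODE boundary value problem in $\theta$. Hypothesis $(iii)$ gives invertibility for $k\neq 1$, while hypothesis $(i)$ produces a one-dimensional kernel at $k=1$ spanned by a pair $(\dot b_\star,\dot w_\star)$ with $\dot b_\star(\phi)=\cos(\ell\phi)$ and $\dot w_\star$ proportional to $\varphi_{\ell,n}(\theta)\cos(\ell\phi)$. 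A Fredholm argument mode by mode yields $\mathrm{codim}\,\mathrm{Range}(L_\star)=1$. For transversality I compute $\partial_a L_\star(\dot b_\star,\dot w_\star)$ and pair with a cokernel generator; the combined $a$-variation of $\mathcal L$, $\mathcal N$, $\chi_a$ and $\mu_{0,m}$ produces a scalar proportional to $\mu_{0,m}'(a_\star)-\lambda_{\ell,n}'(a_\star)$, which is nonzero by $(ii)$. Crandall--Rabinowitz then yields the $C^1$ curve $(a_s,b_s)$; the explicit prefactor $-\varphi_{\ell,n}'(\ta_\star)/\psi_{0,m}''(\ta_s)$ in \eqref{E.bs} is read off by expanding the Dirichlet condition $u_s=\psi_{0,m}(\ta_s)$ along the curve, and analyticity of $u_s$ and of $\partial\Omega_{a_s}^{b_s}$ follows from elliptic bootstrap together with the analytic regularity of overdetermined elliptic free boundaries.

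The main obstacle will be carrying out the above construction rigorously near the north pole: the boundary-localized $\Phi_{a,b}$ introduces cutoffs whose effect on the pulled-back operators $\mathcal L(a,b)$ and $\mathcal N(a,b)$ must be shown to be smooth in $(a,b)$ in the anisotropic H\"older topologies, and the formal Fourier decomposition in $\phi$ must be verified to be a genuine Banach-space direct-sum decomposition despite the coordinate singularity at $\theta=0$. The hypothesis $\ell\geq 4$ enters here in quotienting out the low-mode kernels generated by the $SO(3)$-action on the sphere (in particular the two rigid motions fixing the spherical cap modulo the pole act on $b$ through the $\ell=1$ mode) and in ensuring the required Sobolev-type embeddings between the anisotropic spaces so that $F$ is smooth and $L_\star$ is Fredholm of index zero.
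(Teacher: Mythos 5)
Your high-level plan coincides with the paper's: pull back to a fixed reference disk, set up a single Crandall--Rabinowitz bifurcation argument over the trivial zonal branch, exploit the Fourier decomposition in $\phi$ together with the nonresonance hypothesis to get a one-dimensional kernel, and read off transversality from $\mu_{0,m}'(a_\star)-\lambda_{\ell,n}'(a_\star)\neq 0$. You also correctly identify the key geometric point that the diffeomorphism $\Phi_a^b$ must be localized near $\partial\mathbb D$ because the cap is contractible. However, there are two structural differences worth flagging.

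First, you set up a joint system in the unknowns $(b,w)$, mapping into $C^{0,\alpha}(\overline{\mathbb D})\times C^{1,\alpha}(\mathbb T)$. The paper instead reduces to a \emph{single} unknown $v$ (with only the Dirichlet condition $v|_{\partial\mathbb D}=0$) by \emph{reconstructing} the boundary deformation from the normal trace: setting $b_v(\phi):=-\frac{\widetilde a}{\overline\psi_{0,m}''(1)}\partial_\rho v(1,\phi)$ and then correcting $v$ to $w_v$ so that $\overline\psi_{0,m}+w_v$ automatically satisfies both Dirichlet and Neumann conditions on $\partial\mathbb D$. The operator $G_a(v):=L_a^{b_v}[\overline\psi_{0,m}+w_v]+\mu_{0,m}(a)[\overline\psi_{0,m}+w_v]$ then lives on a single Banach space, and the linearization is simply $DG_a(0)=L_a^0+\mu_{0,m}(a)\,\mathrm{id}$. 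This sidesteps the bookkeeping of a coupled interior/boundary system. Your coupled formulation can in principle also work (it is standard in free boundary problems), but the Fredholm-of-index-zero statement then needs to be proved for a genuinely vector-valued operator, and the precise codomain structure you propose would need to be rechecked.

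Second — and this is the more substantive discrepancy — your anisotropic space imposes an extra derivative in $\phi$, $\mathcal X:=\{w: \partial_\phi w\in C^{2,\alpha}\}$, whereas the paper imposes an extra (weighted) derivative in $\rho$:
$$
\mathcal X^{k,\alpha}:=\{v\in C^{k,\alpha}_\ell(\overline{\mathbb D}):\ \sin(\widetilde a\,\rho)\,\partial_\rho v\in C^{k,\alpha}_\ell(\overline{\mathbb D})\}.
$$
The paper's choice is not incidental. The extra $\rho$-regularity is exactly what makes the $b_v$-reconstruction map land in $C^{2,\alpha}_\ell(\mathbb T)$ (since $b_v$ is determined by the trace of $\partial_\rho v$), and it is precisely what the regularity bootstrap in the Fredholm argument exploits, via the commutation identity $\sin(\widetilde a\,\rho)\partial_\rho L_a^0 v=L_a^0[\sin(\widetilde a\,\rho)\partial_\rho v]-2\widetilde a\cos(\widetilde a\,\rho)L_a^0 v$. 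The vanishing of the weight $\sin(\widetilde a\,\rho)$ at $\rho=0$ is also what tames the coordinate singularity at the pole. Your $\partial_\phi$-regularity commutes trivially with $L_a^0$, which may still give a Schauder bootstrap, but it does not by itself produce the needed regularity of $\partial_\rho v$ on $\partial\mathbb D$, and the codomain $\mathbf Y$ would have to be re-designed accordingly. If you intend to follow the strategy of \cite{FMW,EFRS} (and this paper), note that the extra regularity is in the direction transversal to the free boundary, not along it. Finally, your heuristic explanation of the hypothesis $\ell\geq 4$ via the $SO(3)$-action is not quite what is at play: working in $C^{k,\alpha}_\ell$ already kills the low modes for any $\ell\geq 2$, and the paper simply states $\ell\geq 4$ as part of the hypothesis without relying on it beyond the symmetry restriction.
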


\begin{remark}
    In the remainder of this section we assume that $\ell \geq 4$, $m \geq 1$, and $n \geq 0$, are nonnegative integers. 
\end{remark}

\subsection{Set up and functional setting}

We want to show that there exist some constant $a \in (0,1)$ and some nontrivial function $b \in C^{2,\alpha}(\TT)$ for which there is a Neumann eigenfunction $u$ on  $\Omega_a^b$, with eigenvalue $\mu_{0,m}(a)$, which is constant on the boundary $\pd \Omega_a^b$. In other words, we want to find a nontrivial solution to
\begin{equation} \label{E.overdeterminedAim}
\left\{
\begin{aligned}
\ & \Delta u +\mu_{0,m}(a) u = 0 && \textup{in } \Omega_{a}^{b}\,,\\
& u = {\rm constant} \,, \quad \partial_{\nu} u  = 0 \quad&& \textup{on } \partial \Omega_{a}^{b}\,.
\end{aligned}
\right.
\end{equation}

Since finding $b \in C^{2,\alpha}(\TT)$ is a key part of the problem, we shall start by fixing a domain $\mathbb{D}$, and mapping the fixed domain $\mathbb{D}$ to the perturbed one $\Omega_a^b$. We define $\mathbb{D} := \{(\rho,\phi) \in [0,1) \times \TT\}$, and again, we identify all the points with $\rho = 0$ with the origin. We map the domain $\DD$ to $\Omega_a^b$ given in \eqref{E.Omab} via a diffeomorphism
$
\Phi_{a}^b : \DD  \to \Omab  \,,
$
defined in spherical coordinates by 
\begin{equation} \label{E.diffeomophism}
\DD \ni (\rho,\phi) \mapsto  (  \rho(\ta + \widetilde{\chi}(1-\rho) b(\phi)), \phi) \in \Omega_a^b\,.
\end{equation}
Here, $\widetilde{\chi} : \RR \to [0,1]$ is a smooth even cutoff function such that
$$
\widetilde{\chi}(t) = 1 \quad \textup{if } |t| < \frac14\,,  \quad \widetilde{\chi}(t) = 0 \quad \textup{if } |t| > \frac12 \,, \quad \textup{and} \quad |\widetilde{\chi}'(t)| \leq 10\,.
$$
Moreover, we set
\begin{equation} \label{E.pullbackAnsatz}
    \overline{\psi}_{0,m} := \psi_{0,m} \circ \Phi_{a,1}^0 \quad \textup{and} \quad \overline{\varphi}_{\ell,n}:= \varphi_{\ell,n} \circ \Phi_{a,1}^0\,.
\end{equation}

Denoting by $g_{\mathbb{S}^2}$ the standard round metric on the sphere, and setting
$$
g_b := (\Phi_a^b)^* g_{\mathbb{S}^2} \quad \textup{and} \quad v := (\Phi_a^b)^* u\,,
$$
we can rewrite \eqref{E.overdeterminedAim} as
\begin{equation} \label{E.pullbackProblem}
\left\{
\begin{aligned}
\ & L_a^b v +\mu_{0,m}(a) v = 0 && \textup{in } \DD\,,\\
& v = {\rm constant} \,, \quad \partial_{\nu_b} v  = 0 \quad&& \textup{on } \partial \DD\,,
\end{aligned}
\right.
\end{equation}
where $L_a^b$ is the Laplace-Beltrami operator  with respect to the metric $g_b$, and $\nu_{g_b}$ is the unit outer normal vector field on $\partial \DD$ with respect to $g_b$.

\begin{remark}
Recall that, in spherical coordinates $(\theta,\phi)$, the Laplace-Beltrami operator in $\mathbb{S}^2 \setminus \{ \rm N, S\}$  is given by
$$
\Delta u = \partial_\theta^2 u + \frac{\cos(\theta)}{\sin(\theta)} \pd_\theta u + \frac{1}{\sin^2(\theta)} \pd_\phi^2 u\,.
$$
Following \cite{EFR} (see also \cite[Appendix A]{EFRS}), one can check that in the coordinates $(\rho,\phi)$, and in  $\mathbb{S}^2 \setminus \{\rm N\}$,
\begin{align*}
&L_a^{b} v =  \frac{1}{(\pd_\rho \theta)^2} \pd_\rho^2 v - \frac{\pd_\rho^2 \theta}{(\pd_\rho \theta)^3} \pd_\rho v + \frac{\cos(\theta)}{\sin(\theta)} \frac{1}{\pd_\rho \theta }\pd_\rho v \\
& \quad  + \frac{1}{\sin^2(\theta)} \left[ \pd_{\phi}^2 v + \frac{(\pd_{\phi} \theta)^2}{(\pd_\rho \theta)^2} \pd_\rho^2 v - 2 \frac{\pd_\phi \theta}{\pd_\rho \theta }\pd_{\rho}\pd_{\phi} v - \frac{\pd_\rho^2 \theta (\pd_\phi \theta)^2 - 2 \pd_{\rho}\theta  \pd_{\phi} \theta (\pd_{\rho}\pd_{\phi}\theta)+(\pd_\rho \theta)^2 \pd_{\phi}^2 \theta }{(\pd_\rho \theta)^3} \pd_\rho v \right]\,,
\end{align*}
where $\theta(\rho,\phi)$ is given by \eqref{E.diffeomophism} so that
\begin{align*}
\theta (\rho, \phi) &= \rho(\ta + \widetilde{\chi}(1-\rho) b(\phi))\\
    \pd_\rho \theta(\rho,\phi) & = \ta + \widetilde{\chi}(1-\rho) b(\phi) -  \rho \widetilde{\chi}'(1-\rho) b(\phi)\,, \\
    \pd_\rho^2 \theta(\rho,\phi) & = -2\widetilde{\chi}'(1-\rho) b(\phi) + \rho \widetilde{\chi}''(1-\rho) b(\phi)\,, \\
    \pd_\phi\theta(\rho,\phi) & = \rho \widetilde{\chi}(1-\rho) b'(\phi)\,, \\
    \pd_\phi^2\theta(\rho,\phi) &= \rho \widetilde{\chi}(1-\rho) b'' (\phi)\,, \\
    \pd_\rho \pd_\phi \theta(\rho,\phi) & =  \widetilde{\chi}(1-\rho) b'(\phi) - \rho  \widetilde{\chi}'(1-\rho) b'(\phi)\,.
\end{align*}

In particular, we have that
\begin{equation}
\label{eq:La0}
L_a^0 v = \frac{1}{\ta^2} \pd_\rho^2 v + \frac{\cos(\ta \rho)}{\ta\sin(\ta \rho)}  \pd_\rho v + \frac{1}{\sin^2(\ta \rho)} \pd_\phi^2 v\,.
\end{equation}

Note that the assumption $\|b\|_{L^\infty(\T)} < \ta /10$ is enough to ensure that
$$
\theta(\rho,\phi) \in (0,\pi)\qquad \textup{for all }(\rho,\phi) \in (0,\phi) \in (0,1) \times \TT\,,
$$
and that
$$
\pd_\rho \theta (\rho,\phi) > \frac{\ta}{4} > 0\,, \quad \textup{for all }(\rho,\phi) \in [0,\phi) \in (0,1) \times \TT\,.
$$

Also, let us point out that, from $v = {\rm constant}$ on $\pd \DD$, it follows that, on $\pd \DD$,
$$
\pd_{\nu_b} v = 0 \quad \textup{if and only if} \quad \sin(\rho \ta) \pd_\rho v = 0\,.
$$
Note also that, on $\pd \DD$,
$$
\sin(\rho \ta) \pd_\rho v = 0 \ \Longleftrightarrow\ \sqrt{1-a^2} \pd_\rho v = 0\ \Longleftrightarrow\ \pd_\rho v = 0\,.
$$
\end{remark}
 
Our goal is now to find some constant $a \in (0,1)$ and some function $b \in C^{2,\al}(\TT)$ for which \eqref{E.pullbackProblem} has a nontrivial solution. Inspired by \cite{FMW}, see also \cite{EFRS}, we study \eqref{E.pullbackProblem} in anisotropic H\"older spaces. Furthermore, we are interested in looking for solutions which are $\ell$-fold symmetric, that is solutions which are invariant under the action of the isometry group of a $\ell$-sided regular polygon. Also, because of the invariance of the problem under rotations, in order to uniquely specify our solution, we impose an additional even symmetry along the $\phi = 0$ axis. Thus, we set
$$
C_\ell^{k,\alpha}(\overline{\DD}):= \big\{v \in C^{k,\alpha}(\overline{\DD}): v(\rho,\phi)=v(\rho,-\phi)\,, \  v(\rho,\phi) = v (\rho,\phi+\tfrac{2\pi}\ell)\big\}\,,
$$
which corresponds to functions with the same symmetries as $\cos (j \ell \theta)$ for $j \in \mathbb N \cup \{0\}$.

We denote the norm by 
$$
\|v\|_{C^{k,\alpha}} := \|v\|_{C^{k,\alpha}(\overline{\DD})}\,.
$$
Having this scale of Banach spaces at hand, we define
$$
\mathcal{X}^{k,\al}:= \big\{ v \in C^{k,\alpha}_\ell (\overline{\DD}): \sin(\ta \rho) \partial_{\rho} v \in C^{k,\alpha}_\ell(\overline{\DD}) \big\}\,,
$$
endowed with the norm
$
\|v\|_{\mathcal{X}^{k,\al}} := \|v\|_{C^{k,\alpha}} + \|\sin(\ta \rho)\partial_\rho v \|_{C^{k,\alpha}}
$,

and its closed subspaces 
$$
\mathcal{X}^{k,\al}\D := \big\{v \in \mathcal{X}^{k,\alpha}: v = 0 \textup{ on } \partial \DD \big\},$$
and
$$
\mathcal{X}^{k,\alpha}\DN  := \big\{v \in \mathcal{X}^{k,\alpha}: v = \pd_\rho v =0 \textup{ on } \partial \DD \big\}\,.
$$
For convenience, we also introduce the shorthand notation
$$
\mathbf{X}:= \mathcal{X}\D^{2,\alpha} \quad \textup{ and } \quad \|u\|_{\mathbf{X}} := \|u\|_{\cX^{2,\alpha}}\,.
$$
We  also need the space
$$
\mathbf{Y}:= C_\ell^{1,\alpha}(\overline{\DD}) + \mathcal{X}\D ^{0,\alpha}\,,
$$
endowed with the norm
$$
\|u\|_{\mathbf{Y}} := \inf\big\{ \|u_1\|_{C^{1,\alpha}}+\|u_2\|_{\cX^{0,\alpha}} : u_1 \in C_\ell^{1,\alpha}(\overline{\DD}),\ u_2 \in \mathcal{X}\D ^{0,\alpha},\ u = u_1+u_2 \big\}\,.
$$
As discussed in \cite[Remark 3.2]{FMW}, it is not difficult to see that $(\mathbf{Y},\|\cdot \|_{\mathbf{Y}})$ is a Banach space. Finally, we also set
$$
\mathcal{B}_{\ta} := \Big\{ b \in C^{2,\alpha}_\ell(\TT): \|b\|_{L^\infty(\TT)} < \frac{\ta}{10} \Big\}\,.
$$

We look for a solution to \eqref{E.pullbackProblem} of the form
$$
v = \overline{\psi}_{0,m} + \Theta\,,
$$
with $\overline{\psi}_{0,m}$ as in \eqref{E.eigenfunctionspsiphi} and another unknown $\Theta \in \cX^{2,\alpha}\DN$. The first result we need is the following:

\begin{lemma} \label{L.mappingProperties}
For all $a \in (0,1)$, the following assertions hold true:
\begin{itemize}
    \item[(i)] The map $(\Theta,b) \mapsto L_a^b[\,\overline{\psi}_{0,m}+\Theta] + \mu_{0,m}(a)[\,\overline{\psi}_{0,m}+\Theta]$ maps $\cX^{2,\alpha}\DN  \times \mathcal{B}_{\ta} \to\mathbf{Y}$.
    \item[(ii)] The linear operator $w \mapsto L_a^0 w + \mu_{0,m}(a) w$ maps $\mathbf{X} \to \mathbf{Y}$.
\end{itemize}
\end{lemma}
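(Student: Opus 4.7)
My plan is to exploit two structural features simultaneously: (a) the cutoff $\widetilde{\chi}$ makes $\Phi_a^b$ coincide with $\Phi_a^0$ on the collar $\{\rho \leq 1/2\}$ around the pole, so the coefficients of $L_a^b$ agree with those of $L_a^0$ there independently of $b \in \mathcal{B}_{\ta}$; and (b) the defining condition of $\mathcal{X}^{k,\alpha}$, namely $\sin(\ta\rho)\pd_\rho v \in C^{k,\alpha}$, is precisely tuned to the coordinate singularity at $\rho = 0$. Thus all mapping issues separate cleanly into an interior analysis near the pole (handled uniformly in $b$) and an exterior analysis away from the pole (where everything is smooth but where $b$ appears nontrivially).

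For (ii), I would start from the factored form
$$
L_a^0 v = \frac{1}{\ta^2\sin(\ta\rho)}\,\pd_\rho\bigl(\sin(\ta\rho)\,\pd_\rho v\bigr) + \frac{1}{\sin^2(\ta\rho)}\,\pd_\phi^2 v,
$$
which follows from \eqref{eq:La0}. For $v\in\mathbf{X}$, the definition of $\mathcal{X}^{2,\alpha}$ yields $\pd_\rho(\sin(\ta\rho)\pd_\rho v)\in C^{1,\alpha}$. The factor $1/\sin(\ta\rho)$ can only spoil regularity at the pole, and precisely there the $\ell$-fold symmetry of $v$ (with $\ell\geq 1$) forces the numerator to vanish at an order that compensates the singularity; this places the whole radial part in $\mathcal{X}^{0,\alpha}$. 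An analogous argument handles $\pd_\phi^2 v/\sin^2(\ta\rho)$. To upgrade from $\mathcal{X}^{0,\alpha}$ to $\mathbf{Y} = C^{1,\alpha}_\ell + \mathcal{X}^{0,\alpha}_D$, I would subtract a smooth extension of the trace of $L_a^0 v$ on $\pd\mathbb{D}$, so that the remainder vanishes on $\pd\mathbb{D}$ and lies in $\mathcal{X}^{0,\alpha}_D$ while the subtracted piece is $C^{1,\alpha}$.

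For (i), I would exploit the identity $L_a^0\overline{\psi}_{0,m}+\mu_{0,m}(a)\overline{\psi}_{0,m}=0$ built into the Ansatz \eqref{E.pullbackAnsatz} and decompose
$$
L_a^b[\,\overline{\psi}_{0,m}+\Theta] + \mu_{0,m}(a)[\,\overline{\psi}_{0,m}+\Theta] = (L_a^b - L_a^0)[\,\overline{\psi}_{0,m}+\Theta] + \bigl(L_a^0\Theta+\mu_{0,m}(a)\Theta\bigr).
$$
The second bracket lies in $\mathbf{Y}$ by (ii) applied to $\Theta\in\mathcal{X}^{2,\alpha}\DN\subset\mathbf{X}$. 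For the first bracket, the choice of $\widetilde{\chi}$ ensures that $L_a^b - L_a^0$ is supported in $\{\rho\geq 1/2\}$, where $\sin(\ta\rho)$ is bounded below and where each coefficient of $L_a^b$ in the remark following \eqref{E.pullbackProblem} is a smooth rational expression in $b,b',b''$ depending continuously on $b\in\mathcal{B}_{\ta}$. Applied to $\overline{\psi}_{0,m}+\Theta\in C^{2,\alpha}$ this yields a $C^{0,\alpha}$ function supported away from the pole, which after the same $C^{1,\alpha}$ boundary correction used in (ii) belongs to $\mathbf{Y}$.

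The main obstacle, as usual in the contractible setting, is the coordinate singularity at $\rho=0$: naive estimates lose powers of $\sin^{-1}(\ta\rho)$, and the argument only closes because the singular weights are matched against the order of vanishing at the pole built both into $\ell$-fold symmetric $C^{2,\alpha}$ functions and into the anisotropic condition defining $\mathcal{X}^{2,\alpha}$. Verifying that these two vanishing mechanisms really compensate each singular factor is the delicate step, and is exactly what distinguishes the present analysis from the doubly-connected setting treated in \cite{FMW, EFRS}, where no pole lies inside the reference domain.
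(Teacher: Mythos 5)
Your decomposition is genuinely different from the paper's, and at a high level it is attractive: for (i) you use the eigenfunction identity $L_a^0\overline{\psi}_{0,m}=-\mu_{0,m}(a)\overline{\psi}_{0,m}$ to reduce to $(L_a^b-L_a^0)v$ plus something covered by (ii), and for (ii) you use the divergence form of $L_a^0$ plus a boundary correction. The paper instead inserts a near-pole cutoff $\widetilde\chi(\rho)$ in (i), and in (ii) multiplies by the weight $\sin^2(\ta\rho)/(1-a^2)$ (which kills the pole singularity and equals $1$ on $\pd\DD$) to split $L_a^0 w$ as $F_3+F_4$; that split does not use $\ell$-fold symmetry at all.

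The main gap is in your treatment of the first bracket in (i). You produce a function that is $C^{0,\alpha}$ and supported in $\{\rho\ge 1/2\}$, and then claim that a $C^{1,\alpha}$ boundary correction places it in $\mathbf{Y}$. That step is false as stated: membership in $\cX^{0,\alpha}\D$ demands not only a vanishing trace but also that $\sin(\ta\rho)\pd_\rho(\cdot)$ be $C^{0,\alpha}$, and a generic $C^{0,\alpha}$ function with zero trace does not satisfy this. To close the argument one must go term by term through $L_a^b-L_a^0$ and decompose it into $C^{1,\alpha}_\ell+\cX^{0,\alpha}\D$, using precisely the anisotropic information $v\in\cX^{2,\alpha}$ (not just $v\in C^{2,\alpha}$, which is what you invoke) to absorb the extra $\pd_\rho$ falling on $\pd_\rho^2 v$ and $\pd_\rho\pd_\phi v$; the term carrying $\pd_\phi^2\theta\sim b''(\phi)$ is only $C^{0,\alpha}$ and must be checked to be $\cX^{0,\alpha}\D$ separately, as the paper does by placing it inside $F_2$. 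A secondary inaccuracy: your claim that the $\ell$-fold symmetry with $\ell\ge 1$ makes each summand of the factored form of $L_a^0$ individually $\cX^{0,\alpha}$ is wrong — for $\ell=1$ the mode $\rho\cos\phi$ gives $\pd_\rho(\sin(\ta\rho)\pd_\rho v)|_{\rho=0}\neq 0$ and the quotient blows up. What the mechanism really needs is $\ell\ge 3$ so that the degree-two Taylor polynomial of $v$ at the pole is purely radial; this is available here since the theorem assumes $\ell\ge 4$, but it should be stated correctly (alternatively, one can show $L_a^0 v\in\cX^{0,\alpha}$ directly from the commutator identity \eqref{E.commute} without separating the two terms, which avoids the symmetry argument entirely).
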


\begin{proof}
Let ($v,b) \in \cX^{2,\alpha} \times  \mathcal{B}_{\ta}$ be fixed but arbitrary. First of all, observe that
\begin{align*}
L_a^b  v & = L_a^b v - \widetilde{\chi}(\rho) L_a^0 v + \widetilde{\chi}(\rho) L_a^0 v \\
& =  \left( \frac{1}{(\pd_\rho \theta)^2} - \frac{\widetilde{\chi}(\rho)}{\ta^2} \right)\pd_\rho^2 v + \left( \frac{\cos(\theta)}{\sin(\theta)} \frac{1}{\pd_\rho \theta } - \frac{\cos(\ta \rho)}{\sin(\ta \rho)} \frac{\widetilde{\chi}(\rho)}{\ta} \right)   \pd_\rho v + \left( \frac{1}{\sin^2(\theta)} - \frac{\widetilde{\chi}(\rho)}{\sin^2(\ta \rho)} \right) \pd_\phi^2 v  + \widetilde{\chi}(\rho) L_a^0 v\\
& \quad  - \frac{\pd_\rho^2 \theta}{(\pd_\rho \theta)^3} \pd_\rho v + \frac{1}{\sin^2(\theta)} \left[\frac{(\pd_{\phi} \theta)^2}{(\pd_\rho \theta)^2} \pd_\rho^2 v - 2 \frac{\pd_\phi \theta}{\pd_\rho \theta }\pd_{\rho}\pd_{\phi} v - \frac{\pd_\rho^2 \theta (\pd_\phi \theta)^2 - 2 \pd_{\rho}\theta  \pd_{\phi} \theta (\pd_{\rho}\pd_{\phi}\theta)+(\pd_\rho \theta)^2 \pd_{\phi}^2 \theta }{(\pd_\rho \theta)^3} \pd_\rho v \right] \,,
\end{align*}
and that, by a direct computation:
\begin{equation} \label{E.commute}
\sin(\ta \rho) \pd_\rho[L_a^0 v] = L_a^0[\sin(\ta \rho) \pd_\rho v] - 2\ta \cos(\ta \rho) L_a^0v\,.
\end{equation}

Having this decomposition at hand, we define
\begin{align*}
    F_1(v,b)  :=\ &  \left( \frac{1}{(\pd_\rho \theta)^2} - \frac{\widetilde{\chi}(\rho)}{\ta^2} \right) \pd_\rho^2 v + \left( \frac{\cos(\theta)}{\sin(\theta)} \frac{1}{\pd_\rho \theta } - \frac{\cos(\ta \rho)}{\sin(\ta \rho)} \frac{\widetilde{\chi}(\rho)}{\ta} \right)   \pd_\rho v \\
 & +\frac{1}{\sin^2(\theta)} \left(   \frac{(\pd_{\phi} \theta)^2}{(\pd_\rho \theta)^2} \pd_\rho^2 v - 2 \frac{\pd_\phi \theta}{\pd_\rho \theta} \pd_\rho \pd_\phi v \right)  - \frac{\pd_\rho^2 \theta}{(\pd_\rho \theta)^3} \pd_\rho v  \\  
    F_2(v,b) :=\ & \left( \frac{1}{\sin^2(\theta)} - \frac{\widetilde{\chi}(\rho)}{\sin^2(\ta \rho)} \right) \pd_\phi^2 v + \widetilde{\chi}(\rho) L_a^0 v \\
    & - \frac{1}{\sin^2(\theta)} \left( \frac{\pd_\rho^2 \theta (\pd_\phi \theta)^2 - 2 \pd_{\rho}\theta  \pd_{\phi} \theta (\pd_{\rho}\pd_{\phi}\theta)+(\pd_\rho \theta)^2 \pd_{\phi}^2 \theta }{(\pd_\rho \theta)^3} \pd_\rho v \right)\,,
\end{align*}
and we point out that
$$
L_a^b v = F_1(v,b) + F_2(v,b)\,,
$$
and that all the terms of $F_1$ and $F_2$ except for $\tilde \chi (\rho ) L_a^0 v$ vanish for $\rho < 1/4$ (since in that range $\tilde \chi = 1$, and $\theta = \tilde a \rho$).

Moreover, it is easy to check that $F_1(v,b) + \mu_{0,m}(a) v \in C_\ell^{1,\alpha}(\overline{\DD})$ and $F_2(v,b) \in \cX^{0,\al}\D$ for $v = \overline{\psi}_{0,m} + \Theta$ with $\Theta \in \cX^{2,\alpha}\DN$.
This concludes the proof of $(i)$. 

Let $w \in \mathbf{X}$ be fixed but arbitrary. To prove $(ii)$, we first observe that
$$
L_a^0w = \frac{\sin^2(\ta \rho)}{1-a^2} L_a^0 w + \left( 1-  \frac{\sin^2(\ta \rho)}{1-a^2} \right) L_a^0w\,,
$$
where the division by $1-a^2$ is motivated by noticing that at $\rho = 1$ we have $\sin^2 (\tilde a ) = \sin^2 (\arccos (a)) = 1- a^2$. Then, we set
\begin{align*}
    F_3(w) & :=  \frac{\sin^2(\ta \rho)}{1-a^2} L_a^0 w - \frac{1}{1-a^2} \pd_\phi^2 w\,, \\
    F_4(w) & :=  \frac{1}{1-a^2} \pd_\phi^2 w + \left( 1-  \frac{\sin^2(\ta \rho)}{1-a^2} \right) L_a^0w\,,
\end{align*}
and stress that
$$
L_a^0 w  = F_3(w) + F_4(w)\,.
$$
As before, one can check that $F_3(w) + \mu_{0,m}(a) w \in C^{1,\alpha}_\ell(\overline{\DD})$ and $F_4(w) \in \cX^{0,\alpha}\D$ for any $w \in \mathbf{X}$, concluding the proof of $(ii)$. 
\end{proof}

We now construct the bifurcation operator. Inspired by \cite{FMW, EFRS}, we reduce our problem so that the basic unknowns are just a constant $a \in (0,1)$ and a function $v \in \mathbf{X}$. More precisely, we design an operator $G_a: \mathbf{O} \to \mathbf{Y}$, where $\mathbf{O}$ is an open neighborhood of the origin in $\mathbf{X}$, such that
$$
G_a(v) = 0\,, \quad v \in \mathbf{X}\,,
$$
encapsulates the entire overdetermined problem \eqref{E.pullbackProblem}. To that end, for each $v \in \mathbf{X}$, we define
\begin{equation} \label{E.corrections}
    b_v(\phi) := - \frac{\ta}{ \overline{\psi}_{0,m}''(1)} \, \pd_\rho v (1,\phi)\quad \textup{and}  \quad w_v(\rho,\phi) := v(\rho,\phi) + \frac{1}{\ta} \,\overline{\psi}_{0,m}'(\rho) \rho \widetilde{\chi}(\rho-1) b_v(\phi)\,,
\end{equation}
and introduce the open subset
$$
\mathbf{O}:= \Big\{v \in \mathbf{X} :   \|b_v\|_{L^\infty(\TT)} < \frac{\ta}{10} \Big\}\,.
$$

The bifurcation operator we are going to consider is
$$
G_a: \mathbf{O} \to \mathbf{Y}, \quad v \mapsto L_a^{b_v}[\,\overline{\psi}_{0,m}+w_v] + \mu_{0,m}(a) [\,\overline{\psi}_{0,m}+w_v]\,.
$$
Note that the inclusion $G_a(\mathbf{O}) \subset \mathbf{Y}$ follows from Lemma \ref{L.mappingProperties} and the fact that $v \in \mathbf{X}$ implies $w_v \in \cX^{2,\alpha}\DN$. At this point, it should be noted that if $v \in \mathbf{X}$ satisfies $G_a(v) = 0$ for some $a \in (0,1)$, then $\widetilde{w} := \overline{\psi}_{0,m} + w_v$ is the desired solution to \eqref{E.pullbackProblem} with $b = b_v$. The proof of Theorem \ref{T.bifurcation} is now reduced to finding a uniparametric family of nontrivial zeros of the operator $G_a$. This will be done in the following subsections using the celebrated Crandall--Rabinowitz theorem, see, for instance, \cite{CR} or \cite[Theorem I.5.1]{Kielhofer}. Let us also stress that in this setting
$$
G_a(0) = 0\,, \quad \textup{for all } a \in (0,1)\,.
$$
This is the branch of trivial solutions from which we will bifurcate.

\subsection{The linearized operator}

The Fr\'echet derivative of $G_a$ plays a major role in local bifurcation arguments. In our next step towards the proof of Theorem \ref{T.bifurcation}, we compute the Fr\'echet derivative of $G_a$ and analyze its properties.  

\begin{lemma}
For all $a \in (0,1)$, the map $G_a: \mathbf{O} \to \mathbf{Y}$ is smooth. Moreover,
$$
DG_a(0)v = L_a^0 v + \mu_{0,m}(a) v\quad  \textup{ for all } v \in \mathbf{X}\,,
$$
where $L_a^0$ is the Laplace-Beltrami operator with respect to the metric $g_0$. 
\end{lemma}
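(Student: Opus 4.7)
The plan is to exploit the polynomial-rational structure of $G_a$ in the pair $(b_v, w_v)$ together with the fact that the maps $v \mapsto b_v$ and $v \mapsto w_v$ are both linear and bounded.

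\textbf{Smoothness.} Since the assignment $\cX^{2,\alpha} \ni v \mapsto (b_v, w_v) \in C^{2,\alpha}_\ell(\TT) \times \cX^{2,\alpha}\DN$ is linear and continuous (the trace $\partial_\rho v|_{\rho=1}$ is well defined because $\sin(\ta\rho)\pd_\rho v$ is $C^{2,\alpha}$ and $\sin\ta \neq 0$), it suffices to verify that
$$
(b,w)\ \longmapsto\ L_a^b[\overline{\psi}_{0,m} + w] + \mu_{0,m}(a)[\overline{\psi}_{0,m}+w]
$$
is smooth from $\mathcal{B}_{\ta} \times \cX^{2,\alpha}\DN$ into $\mathbf{Y}$. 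This map is affine in $w$. Smoothness in $b$ follows from the explicit coordinate formula for $L_a^b$ stated in the remark after \eqref{E.pullbackProblem}: its coefficients are rational expressions in $b, b'$ and $b''$ whose denominators ($\pd_\rho\theta$ and $\sin\theta$) are uniformly bounded away from zero on $\mathcal{B}_{\ta}$, so standard composition and multiplication properties of (anisotropic) H\"older spaces apply. The target space $\mathbf{Y}$ is guaranteed by Lemma \ref{L.mappingProperties}.

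\textbf{Fr\'echet derivative at $v=0$.} Using $b_{tv} = t b_v$ and $w_{tv} = t w_v$, we write
$$
G_a(tv) = L_a^{tb_v}[\overline{\psi}_{0,m} + t w_v] + \mu_{0,m}(a)[\overline{\psi}_{0,m} + t w_v].
$$
At $t=0$, $G_a(0) = L_a^0 \overline{\psi}_{0,m} + \mu_{0,m}(a)\overline{\psi}_{0,m} = 0$, because $\Delta \psi_{0,m} + \mu_{0,m}(a)\psi_{0,m} = 0$ on $\Omega_a$ and $L_a^0$ is the pull-back of $\Delta$ under $\Phi_a^0$. Differentiating in $t$ at $t = 0$ yields
$$
DG_a(0)v = \partial_t\big|_{t=0} L_a^{tb_v}\,\overline{\psi}_{0,m} + L_a^0 w_v + \mu_{0,m}(a) w_v.
$$
Writing $w_v = v + \beta$ with $\beta := \frac{1}{\ta}\,\overline{\psi}_{0,m}'(\rho)\,\rho\,\widetilde{\chi}(\rho-1)\,b_v(\phi)$, the identity to prove reduces to the cancellation
\begin{equation*}
\partial_t\big|_{t=0} L_a^{tb_v}\,\overline{\psi}_{0,m} + L_a^0 \beta + \mu_{0,m}(a)\beta = 0.
\end{equation*}

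\textbf{Key identity.} This cancellation is the very reason $\beta$ was added to the definition of $w_v$. Introduce the one-parameter family $P(t) := \psi_{0,m}\circ \Phi_a^{tb_v}$. By zonality of $\psi_{0,m}$ and the explicit form \eqref{E.diffeomophism},
$$
P(t)(\rho,\phi) = \psi_{0,m}\!\left(\rho \ta + t\,\rho\,\widetilde{\chi}(1-\rho)\, b_v(\phi)\right),
$$
so $P(0) = \overline{\psi}_{0,m}$ and, using evenness of $\widetilde{\chi}$ together with $\overline{\psi}_{0,m}'(\rho) = \ta\,\psi_{0,m}'(\rho\ta)$, one checks that $\dot P(0) = \beta$. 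Since $L_a^{tb_v}$ is the pull-back of $\Delta$ by $\Phi_a^{tb_v}$, the identity $\Delta\psi_{0,m} = -\mu_{0,m}(a)\psi_{0,m}$ gives
$$
L_a^{tb_v} P(t) + \mu_{0,m}(a) P(t) = 0 \qquad \text{for every } t,
$$
and differentiating in $t$ at $t=0$ produces exactly the required cancellation. Therefore $DG_a(0)v = L_a^0 v + \mu_{0,m}(a) v$.

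\textbf{Main obstacle.} The only subtle point is recognizing that the correction $\beta$, despite its boundary-localized cutoff $\widetilde{\chi}(\rho-1)$, coincides with the first $b$-variation of the pull-backed eigenfunction $P(t)$, which is where the evenness of $\widetilde{\chi}$ plays its role. The remainder is a formal chain-rule computation whose continuity in the anisotropic H\"older topology is secured by the smoothness argument above and by Lemma \ref{L.mappingProperties}.
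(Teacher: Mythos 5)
Your proof is correct and follows essentially the same approach as the paper: reduce the Fréchet derivative computation to a one-parameter derivative via linearity of $v \mapsto (b_v, w_v)$, and then use that $\psi_{0,m}$ is an eigenfunction of $\Delta$ on $\mathbb{S}^2_+$ so that the pulled-back family $\overline{\psi}^{tb_v}_{0,m}$ (your $P(t)$) satisfies $L_a^{tb_v}P(t) + \mu_{0,m}(a)P(t) = 0$, whose $t$-derivative at $t=0$ is exactly the cancellation needed (and indeed $\dot P(0) = w_v - v = \beta$, as in the paper). The only cosmetic difference is that you flesh out the smoothness argument and the chain-rule check of $\dot P(0) = \beta$ more explicitly than the paper does.
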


\begin{proof}
    The smoothness of $G_a$ immediately follows from its definition, so we just have to prove that $DG_a(0) = L_a^0 + \mu_{0,m}(a)\, {\rm id}$. Since the maps $v \mapsto b_v$ and $v \mapsto w_v$ are linear, we have that $b_{sv} = s b_v$ and $w_{sv} = s w_v$, and so that
\begin{equation} \label{E.ConcDGa1}
        \begin{aligned}
    DG_a(0) v & = \frac{\dd}{\dd s} \left( L_a^{s b_w} [\,\overline{\psi}_{0,m} + s w_v] + \mu_{0,m}(a) [\, \overline{\psi}_{0,m} + s w_v] \right)\Big|_{s=0} \\
& = L_a^0 w_v + \mu_{0,m}(a)w_v + \frac{\dd}{\dd s} L_a^{sb_v}\Big|_{s=0}[\,\overline{\psi}_{0,m}]\,.
    \end{aligned}
\end{equation}

Now, to compute the last term on the second line, let us point out that $\psi_{0,m}$ actually satisfies
$$
\Delta \psi_{0,m} + \mu_{0,m}(a) \psi_{0,m} = 0 \quad \textup{on } \mathbb{S}_+^2\,.
$$
Thus, setting $\psi_{0,m}(\theta,\phi) \equiv \psi_{0,m}(\theta)$ by abuse of notation, and defining $\overline{\psi}_{0,m}^{b} := (\Phi_a^b)^*\psi_{0,m} $, we have that
$$
L_a^b \overline{\psi}_{0,m}^b + \mu_{0,m}(a) \overline{\psi}^b_{0,m} = 0\,, \quad \textup{in } \DD\,.
$$
In particular, it follows that
$$
L_a^{sb_v} \overline{\psi}_{0,m}^{sb_v} + \mu_{0,m}(a) \overline{\psi}^{sb_v}_{0,m} = 0\,, \quad \textup{in } \DD\,,
$$
and so that
\begin{align*}
    0 & = \frac{\dd}{\dd s} \left( L_a^{sb_v} \overline{\psi}_{0,m}^{sb_v} + \mu_{0,m}(a) \overline{\psi}^{sb_v}_{0,m} \right)\Big|_{s=0} =  \frac{\dd}{\dd s} L_a^{sb_v}\Big|_{s=0}[\,\overline{\psi}_{0,m}] + L_a^0\left[ \frac{\dd}{\dd s} \overline{\psi}_{0,m}^{sb_v} \Big|_{s=0} \right] + \mu_{0,m}(a) \frac{\dd}{\dd s} \overline{\psi}_{0,m}^{sb_v}\Big|_{s=0}
    \,.
\end{align*}
A direct computation shows that
$$
\frac{\dd}{\dd s} \overline{\psi}_{0,m}^{sb_v} \Big|_{s=0} = w_v -v \,.
$$
Hence, it follows that
\begin{equation} \label{E.ConcDGa2}
   \frac{\dd}{\dd s} L_a^{sb_v}\Big|_{s=0}[\,\overline{\psi}_{0,m}] + L_a^0 [w_v-v] + \mu_{0,m}(a) [w_v-v] = 0\,.
\end{equation}
The result follows from \eqref{E.ConcDGa1} and \eqref{E.ConcDGa2} using the fact that $L_a^0$ is a linear operator. 
\end{proof}

Having this expression for the linearized operator at hand, we prove that it is a Fredholm operator of index zero. As a preliminary step, we prove the following regularity result:

\begin{lemma} \label{L.regularity}
    For all $a \in (0,1)$, let $f \in C_\ell ^{0,\alpha}(\overline{\DD})$ and $v \in C_\ell ^{2,\alpha}(\overline{\DD})$ satisfy
\begin{equation} \label{E.regularity}
L_a^0 v = f \quad \textup{in }\DD\,,\qquad v=0\quad \textup{on }\pd\DD\,.
\end{equation}
If $f \in \mathbf{Y}$, then $v \in \mathbf{X}$. 
\end{lemma}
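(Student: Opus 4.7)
Since $v \in C^{2,\alpha}_\ell(\overline{\DD})$ and $v|_{\pd\DD}=0$ are already given, to conclude $v \in \mathbf{X}=\cX^{2,\alpha}\D$ it suffices to prove that $w := \sin(\ta\rho)\,\pd_\rho v$ belongs to $C^{2,\alpha}_\ell(\overline{\DD})$. The plan is to derive an elliptic boundary value problem of Neumann type for $w$, and apply Schauder theory region by region.

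The starting point is the commutator identity \eqref{E.commute}, which combined with $L_a^0 v = f$ gives
$$
L_a^0 w \;=\; \sin(\ta\rho)\,\pd_\rho f \,+\, 2\ta\cos(\ta\rho)\, f \;=:\; g \quad \textup{in } \DD.
$$
To check that $g \in C^{0,\alpha}_\ell(\overline{\DD})$, I decompose $f = f_1 + f_2$ with $f_1 \in C^{1,\alpha}_\ell(\overline{\DD})$ and $f_2 \in \cX^{0,\alpha}\D$. The term $\sin(\ta\rho)\pd_\rho f_1$ is $C^{0,\alpha}$ since $\pd_\rho f_1\in C^{0,\alpha}$ and $\sin(\ta\cdot)$ is smooth; $\sin(\ta\rho)\pd_\rho f_2 \in C^{0,\alpha}$ by the very definition of $\cX^{0,\alpha}$; and $\cos(\ta\rho)(f_1+f_2) \in C^{0,\alpha}$. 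All $\ell$-fold symmetries are preserved throughout.

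The crucial step is to derive a clean Neumann-type condition for $w$ at $\rho=1$. Taking the boundary trace of $L_a^0 v = f$ at $\rho=1$ and using $v|_{\pd\DD}=0$ (so $\pd_\phi v|_{\pd\DD}=\pd_\phi^2 v|_{\pd\DD}=0$) yields
$$
\pd_\rho^2 v(1,\phi) \;=\; \ta^2 f(1,\phi) \,-\, \ta\cot(\ta)\,\pd_\rho v(1,\phi).
$$
Here $f|_{\pd\DD}$ is well defined: for any decomposition $f=f_1+f_2$ as above, $f_2|_{\pd\DD}=0$ (since $f_2\in\cX^{0,\alpha}\D$), so $f|_{\pd\DD}=f_1|_{\pd\DD}\in C^{1,\alpha}(\TT)$. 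Then a short computation based on $\pd_\rho w = \ta\cos(\ta\rho)\pd_\rho v + \sin(\ta\rho)\pd_\rho^2 v$ and the elementary identity $\sin(\ta)\cot(\ta)=\cos(\ta)$ produces an exact cancellation of the $\pd_\rho v$ contributions and leaves
$$
\pd_\rho w\big|_{\pd\DD} \;=\; \ta^2\sin(\ta)\, f\big|_{\pd\DD} \;\in\; C^{1,\alpha}(\TT).
$$

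Finally, I apply Schauder regularity to $L_a^0 w = g$, noting that $w \in C^{1,\alpha}(\overline{\DD})$ a priori (product of the smooth $\sin(\ta\rho)$ with $\pd_\rho v \in C^{1,\alpha}$). On compact subsets of $\DD\setminus\{\rho=0\}$, the coefficients of $L_a^0$ given in \eqref{eq:La0} are smooth and uniformly elliptic, so interior Schauder yields $w \in C^{2,\alpha}_{\mathrm{loc}}$. Near the pole $\rho=0$, changing to Cartesian coordinates pulled back from a smooth chart around the north pole of $\mathbb{S}^2$ via $\Phi_a^0$, the operator $L_a^0$ becomes smooth elliptic and $\rho=0$ becomes an interior point, so interior Schauder again gives $w \in C^{2,\alpha}$ near the pole. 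At the outer boundary $\pd\DD$, the coefficients of $L_a^0$ are smooth and uniformly elliptic, and boundary Schauder theory for the Neumann problem, fed with the data $\pd_\rho w|_{\pd\DD}\in C^{1,\alpha}(\TT)$ from the previous step, promotes $w$ to $C^{2,\alpha}$ up to $\pd\DD$. Patching these three regions gives $w \in C^{2,\alpha}_\ell(\overline{\DD})$, and hence $v \in \mathbf{X}$. The main delicacy is the boundary step at $\rho=1$: $C^{2,\alpha}$-regularity of $w$ there does \emph{not} follow from $v \in C^{2,\alpha}$ via a Dirichlet Schauder estimate, since only $w|_{\pd\DD}\in C^{1,\alpha}(\TT)$ is a priori available; it is precisely the Neumann-type cancellation that overcomes this apparent loss of regularity.
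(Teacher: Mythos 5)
Your proof follows exactly the same strategy as the paper: commute $\sin(\ta\rho)\pd_\rho$ past $L_a^0$ via \eqref{E.commute}, verify the resulting source term lies in $C^{0,\alpha}_\ell$ using the decomposition of $\mathbf{Y}$, derive the Neumann data $\pd_\rho w|_{\pd\DD}=\ta^2\sqrt{1-a^2}\,f_1|_{\pd\DD}$ from the trace of the equation at $\rho=1$, and conclude by Schauder regularity for the Neumann problem. The only difference is that you spell out the Schauder step more explicitly (including the interior chart near the pole), whereas the paper just invokes "standard regularity"; the argument is correct.
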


\begin{proof}
    First, let us point out that, by the definition of $\mathbf{X}$, the proof reduces to showing that $ \vartheta :=\sin(\ta \rho) \pd_\rho v \in C^{2,\alpha}_\ell(\overline{\DD})$. Taking into account \eqref{E.commute}, we see that $\vartheta$ satisfies
    \begin{equation} \label{E.vartheta}
    L_a^0 \vartheta = \sin(\ta\rho) \pd_\rho f + 2\ta \cos(\ta\rho) L_a^0 v \quad \textup{in } \DD\,,
    \end{equation}
    in the distributional sense. Moreover, using the definition of $\mathbf{Y}$, it is immediate to check that the right hand side belongs to $C_\ell^{0,\alpha}(\overline{\DD})$. 

    Now, taking into account the expression of $L_a^0$ in spherical coordinates \eqref{eq:La0}, the fact that $v = 0$ on $\pd \DD$, and \eqref{E.regularity}, we can check that
    $$
    \pd_\rho \vartheta = \sin (\tilde a ) \partial_\rho^2 v + \tilde a \cos (\tilde a ) \partial_\rho v = \ta^2\sin (\ta ) L_a^0 v  =  \ta^2 \sqrt{1-a^2}\, f \quad \textup{on } \pd\DD\,.
    $$
    Moreover, writing $f = f_1 + f_2$ with $f_1 \in C^{1,\alpha}_\ell(\overline{\DD})$ and $f_2 \in \cX^{0,\alpha}\D$\,, we get that
    \begin{equation} \label{E.boundaryvartheta}
        \pd_\rho \vartheta = \ta^2 \sqrt{1-a^2} f_1 \quad \textup{on } \partial\DD\,.
    \end{equation}
    Standard regularity for the Neumann problem \eqref{E.vartheta}--\eqref{E.boundaryvartheta} allows us to conclude that $\vartheta \in C^{2,\alpha}(\overline{\DD})$, as desired. 
\end{proof}

\begin{lemma} \label{L.Fredholm}
    For all $a \in (0,1)$, $DG_a(0) : \mathbf{X} \to \mathbf{Y}$ is a Fredholm operator of index zero. 
\end{lemma}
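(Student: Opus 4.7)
The plan is to exhibit $DG_a(0):\mathbf X \to \mathbf Y$ as a compact perturbation of a Banach isomorphism, which implies the Fredholm property with index zero.

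Fix $\sigma \in \mathbb R$ such that $-\sigma$ lies outside the Dirichlet spectrum of $-L_a^0$ on $\mathbb D$ (for instance $\sigma = 0$, since those eigenvalues are strictly positive), and set
\begin{equation*}
\mathcal L v := L_a^0 v - \sigma v, \qquad v \in \mathbf X, \qquad \text{so that} \qquad DG_a(0) = \mathcal L + \bigl(\mu_{0,m}(a) + \sigma\bigr)\,\iota,
\end{equation*}
where $\iota: \mathbf X \hookrightarrow \mathbf Y$ is the canonical inclusion. Boundedness of $\mathcal L$ is contained in Lemma \ref{L.mappingProperties}(ii).

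The central step is to prove that $\mathcal L$ is an isomorphism. Given $f \in \mathbf Y$, the decomposition $f = f_1 + f_2$ with $f_1 \in C^{1,\alpha}_\ell(\overline{\mathbb D})$ and $f_2 \in \mathcal X^{0,\alpha}_D$ places $f$ in $C^{0,\alpha}_\ell(\overline{\mathbb D})$. Pushing forward via $\Phi_a^0$ to the smooth geodesic disc $\Omega_a \subset \mathbb S^2$, one solves the classical Dirichlet problem
\begin{equation*}
\Delta u - \sigma u = (\Phi_a^0)_* f \quad\text{in } \Omega_a, \qquad u = 0 \quad\text{on } \partial\Omega_a,
\end{equation*}
by Schauder theory, obtaining a unique $u \in C^{2,\alpha}(\overline{\Omega_a})$. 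The $\ell$-fold rotational symmetry and the $\phi \mapsto -\phi$ reflection symmetry of both the geometry and the data are inherited by $u$ from uniqueness. Pulling back yields $v := u \circ \Phi_a^0 \in C^{2,\alpha}_\ell(\overline{\mathbb D})$ with $v|_{\partial\mathbb D} = 0$ satisfying $L_a^0 v = f + \sigma v$, whose right-hand side still lies in $\mathbf Y$; Lemma \ref{L.regularity} then promotes $v$ to $\mathbf X$. Injectivity of $\mathcal L$ follows directly from the choice of $\sigma$, and the open mapping theorem closes the isomorphism claim.

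Compactness of the inclusion $\iota: \mathbf X \hookrightarrow \mathbf Y$ is the remaining ingredient. Since $\mathbf X$ embeds continuously into $C^{2,\alpha}_\ell(\overline{\mathbb D})$ and $C^{1,\alpha}_\ell(\overline{\mathbb D})$ embeds continuously into $\mathbf Y$, it suffices to verify compactness of $C^{2,\alpha}_\ell(\overline{\mathbb D}) \hookrightarrow C^{1,\alpha}_\ell(\overline{\mathbb D})$. This is the standard consequence of Arzel\`a--Ascoli combined with the interpolation $\|g\|_{C^{0,\alpha}} \leq \|g\|_{C^0}^{1-\alpha}\|g\|_{C^{0,1}}^{\alpha}$ applied to the gradients of a $C^{2,\alpha}$-bounded sequence. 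Putting the two steps together, $DG_a(0)$ is the sum of a Banach isomorphism and a compact operator, and is therefore Fredholm of index zero. The most delicate point is the transition between the classical Dirichlet theory on $\Omega_a$ and the anisotropic target space $\mathbf X$: the auxiliary condition $\sin(\widetilde a\rho)\partial_\rho v \in C^{2,\alpha}_\ell(\overline{\mathbb D})$ built into $\mathbf X$ is not delivered automatically by Schauder, and is precisely the content supplied by Lemma \ref{L.regularity}.
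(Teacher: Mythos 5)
Your proof is correct and takes essentially the same route as the paper's: show $L_a^0:\mathbf X\to\mathbf Y$ is an isomorphism by pulling the problem back to the geodesic disk $\Omega_a$, invoking classical Dirichlet Schauder theory there, and upgrading to $\mathbf X$-regularity via Lemma~\ref{L.regularity}; then conclude via compactness of the inclusion $\mathbf X\hookrightarrow\mathbf Y$. The shift $\sigma$ you introduce is unnecessary (and, as you note, $\sigma=0$ already works because the Dirichlet spectrum of $-\Delta$ on $\Omega_a$ is strictly positive), which is exactly the paper's choice.
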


\begin{proof}
    First of all, observe that the multiplication operator $M : \mathbf{X} \to \mathbf{Y},\ v \mapsto \mu_{0,m}(a) v$, is compact. This is an immediate consequence of the compactness of the embedding $\mathbf{X} \hookrightarrow \mathbf{Y}$, which in turn follows from standard compact embedding results for H\"older spaces. Hence, since the property of being a Fredholm operator and the Fredholm index of an operator are preserved by compact perturbations, it suffices to show that $L_a^0 : \mathbf{X} \to \mathbf{Y}$ defines a Fredholm operator of index zero. We will prove the stronger result that $L_a^0$ is a topological isomorphism. To that end, since $L_a^0$ is a continuous linear map, by the open mapping theorem, it suffices to show that it is a bijective map.

    We start showing injectivity. We set $w:= ((\Phi_a^0)^{-1})^* v  $ and $g:= ((\Phi_a^0)^{-1})^* f$. By the definition of the pullback, \eqref{E.regularity} is equivalent to
    \begin{equation} \label{E.PoissonPhysical}
    \Delta w = g \quad \textup{in } \Omega_a\,,  \qquad w = 0 \quad\textup{on } \pd \Omega_a\,.
    \end{equation}
    For any $g \in C^{0,\alpha}(\overline{\Omega}_a)$, the existence and uniqueness of solutions to \eqref{E.PoissonPhysical} belonging to $C^{2,\alpha}(\overline{\Omega}_a)$ is classical. In particular, if $g \equiv 0$, then the unique solution to \eqref{E.PoissonPhysical} is $w \equiv 0$. This shows that $L_a^0: \mathbf{X} \to \mathbf{Y}$ is an injective map. 
    
    Now, we prove that $L_a^0: \mathbf{X} \to \mathbf{Y}$ is onto. We just have to realize that the equivalence above implies that, for any $f \in C_\ell^{0,\alpha}(\overline{\DD})$, there exists a unique solution to \eqref{E.regularity} belonging to $C_\ell^{2,\alpha}(\overline{\DD})$. If in addition $f \in \mathbf{Y}$, Lemma \ref{L.regularity} implies that $v \in \mathbf{X}$, and so that $L_a^0: \mathbf{X} \to \mathbf{Y}$ is onto.  
\end{proof}

\subsection{Proof of Theorem \ref{T.bifurcation}}

We can now provide the proof of Theorem \ref{T.bifurcation}. First of all, under the assumptions of Theorem \ref{T.bifurcation} we check that the kernel of the linearized operator is one-dimensional, and that the transversality condition in the Crandall-Rabinowitz theorem holds. More precisely, we have the following:

\begin{proposition} \label{P.CR}
        Assume that there exists $a_\star \in (0,1)$ such that:
    \begin{itemize}
        \item[$(i)$] $\mu_{0,m}(a_\star) = \lambda_{\ell,n}(a_\star)\quad$ (Bifurcation)
        \item[$(ii)$] $\mu'_{0,m}(a_\star) \neq \lambda'_{\ell,n}(a_\star)\quad $ (Transversality)
        \item[$(iii)$] $\lambda_{\ell,n}(a_\star) \neq \lambda_{\overline{m}\ell,\overline{n}}(a_\star)$ \textup{ for all nonnegative integers} $\overline{m}, \overline{n}$ with $(\overline{m},\overline{n}) \neq (1,n) \quad$ (Nonresonance)  
    \end{itemize}
Moreover, let $\overline{\varphi}_{\ell,n}^{\,a} = \varphi_{\ell,n} \circ \Phi_{a,1}^0$ be as in \eqref{E.pullbackAnsatz}. Then:
\begin{itemize}
    \item[(a)] The kernel of $DG_{a_\star}(0)$ is one-dimensional. Moreover,
    $$
    \textup{Ker}(DG_{a_\star}(0)) = \textup{span}\{\overline{v}_{a_\star}\}\,, \quad \textup{where} \quad \overline{v}_{a_\star}(\rho,\phi) = \overline{\varphi}_{\ell,n}^{\,a_\star}(\rho) \cos(\ell\phi)\,.
    $$
    \item[(b)] $DG_{a_\star}(0)$ satisfies the transversality property, that is
    $$
    \frac{\dd }{\dd a} DG_{a}(0) \Big|_{a=a_\star}[\,\overline{v}_{a_\star}] \not\in \textup{Im}(DG_{a_\star}(0))
    $$
\end{itemize}
\end{proposition}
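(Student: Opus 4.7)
The equation $DG_{a_\star}(0) v = 0$ for $v \in \mathbf{X}$ is, after pushforward through $\Phi_{a_\star}^0$, the Dirichlet eigenvalue problem on $\Omega_{a_\star}$ at the eigenvalue $\mu_{0,m}(a_\star) = \lambda_{\ell,n}(a_\star)$, restricted to functions that are $\ell$-fold symmetric and even in $\phi$. Using the orthogonal Dirichlet basis recalled at the start of Section~\ref{S.CAP}, the only admissible modes are $\varphi_{\bar m \ell,\bar n}(\theta)\cos(\bar m \ell \phi)$ for $\bar m \geq 1$, $\bar n \geq 0$, together with the zonal $\varphi_{0,\bar n}(\theta)$; among these, one further imposes that the eigenvalue equals $\lambda_{\ell,n}(a_\star)$. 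The nonresonance hypothesis $(iii)$ leaves only $(\bar m, \bar n) = (1, n)$, so $\textup{Ker}(DG_{a_\star}(0)) = \textup{span}\{\overline{v}_{a_\star}\}$. Conversely, $\overline{v}_{a_\star} \in \mathbf{X}$ is immediate: the defining identity $P^\ell_{\kappa_{\ell,n+1}}(a_\star) = 0$ gives the Dirichlet condition at $\rho = 1$, and the $\sin^\ell\theta$ factor implicit in $P^\ell_{\kappa_{\ell,n+1}}(\cos\theta)$, combined with $\cos(\ell\phi)$, provides the required smoothness at the north pole.

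\textbf{Part (b).} My plan is to embed $\overline{v}_{a_\star}$ in the natural smooth family of Dirichlet eigenfunctions
$$\tilde v_a(\rho,\phi) \;:=\; P^\ell_{\kappa_{\ell,n+1}(a)}(\cos(\ta\rho))\cos(\ell\phi),$$
which by construction satisfies $L_a^0 \tilde v_a + \lambda_{\ell,n}(a)\,\tilde v_a = 0$ in $\DD$ with $\tilde v_a|_{\partial \DD} = 0$, and which agrees with $\overline{v}_{a_\star}$ at $a = a_\star$. Smoothness of $a \mapsto \kappa_{\ell,n+1}(a)$ follows from the implicit function theorem applied to $P^\ell_\nu(a) = 0$, whose non-degeneracy is standard for simple Dirichlet eigenvalues. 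Applying $DG_a(0) = L_a^0 + \mu_{0,m}(a)\,\textup{id}$ to this family gives $DG_a(0)\tilde v_a = (\mu_{0,m}(a) - \lambda_{\ell,n}(a))\tilde v_a$. Differentiating at $a = a_\star$, where the scalar factor vanishes by $(i)$, and rearranging produces
$$\left[\frac{\dd}{\dd a}DG_a(0)\right]_{a=a_\star}[\overline{v}_{a_\star}] \;=\; (\mu'_{0,m}(a_\star) - \lambda'_{\ell,n}(a_\star))\,\overline{v}_{a_\star} \;-\; DG_{a_\star}(0)[\tilde v'_{a_\star}],$$
with $\tilde v'_{a_\star} \in \mathbf{X}$ (as the $a$-derivative of a smooth family in $\mathbf{X}$; note that the Dirichlet condition $\tilde v_a|_{\partial\DD}=0$ is preserved under $\pd_a$).

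The second term on the right already belongs to $\textup{Im}(DG_{a_\star}(0))$, and by $(ii)$ the coefficient in front of $\overline{v}_{a_\star}$ is nonzero. Thus the transversality claim reduces to showing $\overline{v}_{a_\star} \notin \textup{Im}(DG_{a_\star}(0))$. For this, I would invoke the symmetry of $L_{a_\star}^0$ on functions vanishing on $\partial \DD$ with respect to the pullback Riemannian measure $\mathrm{d}\mu_{a_\star}$ inherited from $\Omega_{a_\star}$: for any $u \in \mathbf{X}$, integration by parts yields
$$\int_{\DD} DG_{a_\star}(0) u \cdot \overline{v}_{a_\star}\,\mathrm{d}\mu_{a_\star} \;=\; \int_{\DD} u \cdot DG_{a_\star}(0) \overline{v}_{a_\star}\,\mathrm{d}\mu_{a_\star} \;=\; 0,$$
whereas $\int_{\DD} \overline{v}_{a_\star}^2\,\mathrm{d}\mu_{a_\star} > 0$. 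Since $\mathbf{Y}$ embeds continuously into $L^2(\DD, \mathrm{d}\mu_{a_\star})$, this $L^2$-orthogonality characterizes $\textup{Im}(DG_{a_\star}(0))$ as a subspace of $\mathbf{Y}$, and the conclusion follows. The only step I expect to require some care is the smooth $a$-dependence of $\kappa_{\ell,n+1}$ (and hence of the family $\tilde v_a$); this is a routine implicit function theorem argument, well-supported by the transversality built into hypothesis $(ii)$.
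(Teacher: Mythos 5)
Your proof is correct and takes essentially the same approach as the paper: both push forward to $\Omega_{a_\star}$ and invoke the Dirichlet basis with nonresonance for part (a), and both differentiate the identity $DG_a(0)\overline{v}_a = (\mu_{0,m}(a)-\lambda_{\ell,n}(a))\overline{v}_a$ at $a_\star$ and conclude via self-adjointness of $L_{a_\star}^0$ under the Dirichlet condition for part (b). Two small caveats: you only prove (and only need) the inclusion $\textup{Im}(DG_{a_\star}(0))\subset\{w:\langle w,\overline{v}_{a_\star}\rangle=0\}$ rather than the equality you assert (the paper does upgrade to equality using the Fredholm index-zero property of Lemma \ref{L.Fredholm}, but neither argument uses the reverse inclusion in the final step); and the smooth $a$-dependence of $\kappa_{\ell,n+1}(a)$ is governed by the simplicity of the fixed-$\ell$ Dirichlet eigenvalue in the radial Sturm--Liouville problem, not by hypothesis $(ii)$ as you suggest.
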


\begin{proof}
    $(a)$ Let $v \in \textup{Ker} (DG_{a_\star}(0))$ and let $w :=  ((\Phi_{a_\star}^0)^{-1})^*v$. Then,
    $$
    \Delta w + \mu_{0,m}(a_\star) w = 0 \quad \textup{in } \Omega_{a_\star}\,, \quad w = 0 \quad \textup{on } \pd \Omega_{a_\star}\,.
    $$ 
    By the bifurcation condition $(i)$ we know that $\mu_{0,m}(a_\star) = \lambda_{\ell,n}(a_\star)$. On the other hand, by the nonresonance condition $(iii)$, we know that $\lambda_{\ell,n}(a_\star) \neq \lambda_{\overline{m}\ell,\overline{n}}(a_\star)$ for all nonnegative integers $\overline{m}, \overline{n}$ with $(\overline{m},\overline{n}) \neq (1,n)$. Combining the definition of $\mathbf{X}$, which only allows for $\ell-$fold symmetric functions, with these two conditions, we conclude that $w(\theta,\phi) = t \varphi_{\ell,n}(\theta)\cos(\ell\phi)$ with $t \in \RR$. Hence, $(a)$ follows.

    $(b)$ We first show that
\begin{equation} \label{E.ImageLinearized}
    \textup{Im}(DG_{a_\star}(0)) = \Big\{ w \in \mathbf{Y}: \ \langle\overline{v}_{a_\star}, w \rangle_{L^2(\Omega_{a_\star},\,\sin(a_\star\rho) d\rho d\phi)} = 0 \Big\}\,.
\end{equation}
    For simplicity, through the rest of the proof we use the notation
    $$
    \langle \cdot , \cdot \rangle :=  \langle \cdot , \cdot \rangle_{L^2(\Omega_{a_\star},\,\sin(a_\star\rho) d\rho d\phi)}\,.
    $$
By Lemma \ref{L.Fredholm} we know that $DG_{a_\star}(0)$ is a Fredholm operator of index zero. Hence, we only need to prove that
$$
    \textup{Im}(DG_{a_\star}(0)) \subset \Big\{ w \in \mathbf{Y}: \ \langle\overline{v}_{a_\star}, w \rangle = 0 \Big\}\,.
$$
    Let $w \in \textup{Im}(DG_{a_\star}(0))$ be fixed but arbitrary. Then, there exists $\vartheta \in \mathbf{X}$ such that $DG_{a_\star}(0) \vartheta = w$. Moreover, since $\overline{v}_{a_\star} \in \textup{Ker}(DG_{a_\star}(0))$, we can integrate by parts and obtain that
    $$
    \langle w, \overline{v}_{a_\star} \rangle = \langle DG_{a_\star}(0) \vartheta , \overline{v}_{a_\star} \rangle = \langle \vartheta , DG_{a_\star}(0) \overline{v}_{a_\star} \rangle = 0\,.
    $$
    This implies that \eqref{E.ImageLinearized} holds, as desired.

    Now, to conclude the proof of $(b)$, we just have to show that
    $$
    \left\langle  \frac{\dd }{\dd a} DG_{a}(0) \Big|_{a=a_\star}[\,\overline{v}_{a_\star}], \overline{v}_{a_\star} \right\rangle \neq 0\,.
    $$
    Let $\overline{v}_a(\rho,\phi) := \overline{\varphi}_{\ell,n}^a(\rho) \cos(\ell\phi)$ and $\overline{w}_{a_\star} := \frac{\dd}{\dd a} \overline{v}_a\big|_{a=a_\star}$. Since
    $$
    DG_a(0) \overline{v}_a = L_a^0 \overline{v}_a + \mu_{0,m}(a) \overline{v}_a = (\mu_{0,m}(a) - \lambda_{\ell,n}(a)) \overline{v}_a\,,
    $$
    we can use again the bifurcation condition $(i)$ and obtain that
    $$
    \frac{\dd}{\dd a} DG_a(0)\Big|_{a = a_\star} [\, \overline{v}_{a_\star}] + DG_{a_\star}(0) \overline{w}_{a_\star} =  (\mu_{0,m}'(a_\star) - \lambda_{\ell,n}'(a_\star)) \overline{v}_{a_\star}\,.
    $$
    Moreover, observe that
    $$
    \langle DG_{a_\star}(0) \overline{w}_{a_\star},  \overline{v}_{a_\star} \rangle = \langle  \overline{w}_{a_\star},  DG_{a_\star}(0) \overline{v}_{a_\star} \rangle = 0\,.
    $$
    Hence, we have that
    $$
    \left\langle   \frac{\dd}{\dd a} DG_a(0)\Big|_{a = a_\star} [\, \overline{v}_{a_\star}], \overline{v}_{a_\star} \right\rangle = (\mu_{0,m}'(a_\star) - \lambda_{\ell,n}'(a_\star)) \langle \overline{v}_{a_\star}, \overline{v}_{a_\star} \rangle\,.
    $$
    The desired conclusion immediately follows from the transversality condition $(ii)$.  
\end{proof}

\begin{proof}[\textbf{Proof of Theorem \ref{T.bifurcation} (completed)}]
    By the definition of $G_a(0)$, Lemma \ref{L.Fredholm} and Proposition \ref{P.CR} we know that the map
\begin{equation}
\big(0,1\big) \times \mathcal{O} \to \cY\,, \qquad (a,v) \mapsto G_a(v)\,,
\end{equation}
satisfies the hypotheses of the Crandall--Rabinowitz theorem. Therefore, there exits a nontrivial continuously differentiable curve through $(a_\star,0)$,
$$
\big\{(a_s, v_s): s \in (-s_\star ,s_\star ),\ (a_0, v_0) = (a_\star,0) \big\}\subset (0,1)\times \mathcal O\,,
$$
such that
$$
G_{a_s}(v_s) = 0\,, \quad \textup{for} \quad s \in  (-s_\star ,s_\star )\,.
$$
Moreover, for $\overline{v}_{a_\star}$ as in Proposition \ref{P.CR} $(a)$, it follows that
\begin{equation} \label{E.expansionvs}
v_s = s \, \overline{v}_{a_\star} + o(s) \quad \textup{in } \mathbf{X}\,, \quad \textup{as } s \to 0\,.
\end{equation}

In this case, we find that
\begin{equation} \label{E.wstilde}
\widetilde{w}_s = \overline{\psi}_{0,m}^{\,a_s} + w_{v_s}\,,  \quad \textup{for} \quad s \in  (-s_\star ,s_\star)\,,
\end{equation}
is a non-trivial solution to \eqref{E.pullbackProblem} with $b = b_{v_s}$. Taking into account the definitions of $b_{v_s}$ and $w_{v_s}$, see \eqref{E.corrections}, we get the desired expression for $b_s(\phi) := b_{v_s}(\theta)$ in \eqref{E.bs}. To return to the original variables, we simply choose $u_s := ((\Phi_{a_s}^{b_s})^{-1})^*\widetilde{w}_s$ and conclude that, for all $s \in (-s_\star,s_\star)$, $u_s \in C^{2,\alpha}(\overline{\Omega}_{a_s}^{b_s})$ solves \eqref{E.overdeterminedCR}.

Finally, it follows from the regularity results of Kinderlehrer and Nirenberg \cite{KN} that, in fact, both the domains $\Omega_{a_s}^{b_s}$ and the solutions $u_s$ are real analytic, for all $s \in (-s_\star,s_\star)$.
\end{proof}

\subsection{Proof of the main results}

At this point, it is immediate to prove the main results of the paper, namely Theorem \ref{T.intro} and Corollary \ref{C.intro}.

\begin{proof}[\textbf{Proof of Theorem \ref{T.intro}}]
    The result immediately follows from the combination of Proposition \ref{P.ComputerAssisted} and Theorem \ref{T.bifurcation}. Indeed, let $a_\star \in (0,1)$ and $m_\star \in \mathbb{N}$, $m_\star \geq 2$, be given by Proposition \ref{P.ComputerAssisted}. The result follows from Theorem \ref{T.bifurcation} applied with $\ell = 8$, $m = m_\star$ and $n = 0$.
\end{proof}

\begin{proof}[\textbf{Proof of Corollary \ref{C.intro}}]
    Let $(u_s)_s$ be the family solutions to \eqref{E.OverdeterminedM} given by Theorem \ref{T.intro} (see also Theorem \ref{T.bifurcation}), and $(\Omega_s)_s \subset \mathbb R^2$ the corresponding family of domains. We set $$\omega_s := \frac{u_s }{\psi_{0,m}(\ta_s)} - 1\,, \quad \textup{for all } 0 < |s| \ll 1\,,$$ 
    and $(\omega_s)_s$ is precisely the family of nontrivial sign-changing solutions to \eqref{E.OverdeterminedCorollaryIntro} that we were looking for. Indeed, using that $u_s|_{\partial \Omega} = \psi_{0,m}(\ta_s) =  constant$, and that it is a Neumann eigenfunction on $\Omega_s$, it is immediate to check that $\omega_s$ is a solution to \eqref{E.OverdeterminedCorollaryIntro}. The fact that $\omega_s$ changes sign immediately follows from the behavior of $\omega_s$ at main order (see \eqref{E.expansionvs} and \eqref{E.wstilde}) and the fact that $m_\star \geq 2$. 
    We refer to the proof of Theorem \ref{T.bifurcation} for more details.  
 
\end{proof}

\begin{figure}[htbp] 
    \centering 

    \begin{subfigure}[b]{0.48\textwidth}
        \centering
        \includegraphics[width=\linewidth]{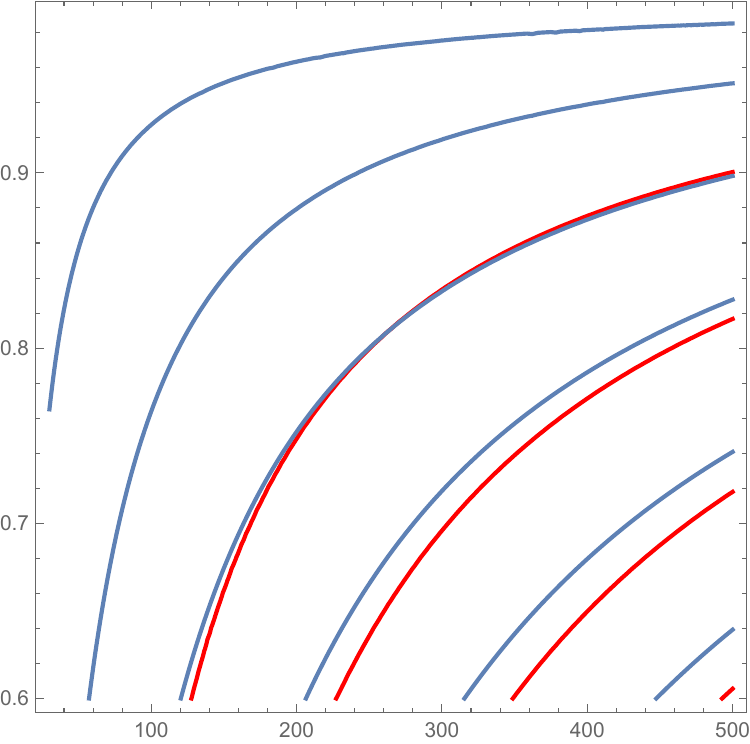}
        \caption{$\mathbb{S}^2$ with $\ell_{\text{Dir}} = 6$.}
        \label{fig:a}
    \end{subfigure}
    \hfill 
    \begin{subfigure}[b]{0.48\textwidth}
        \centering
        \includegraphics[width=\linewidth]{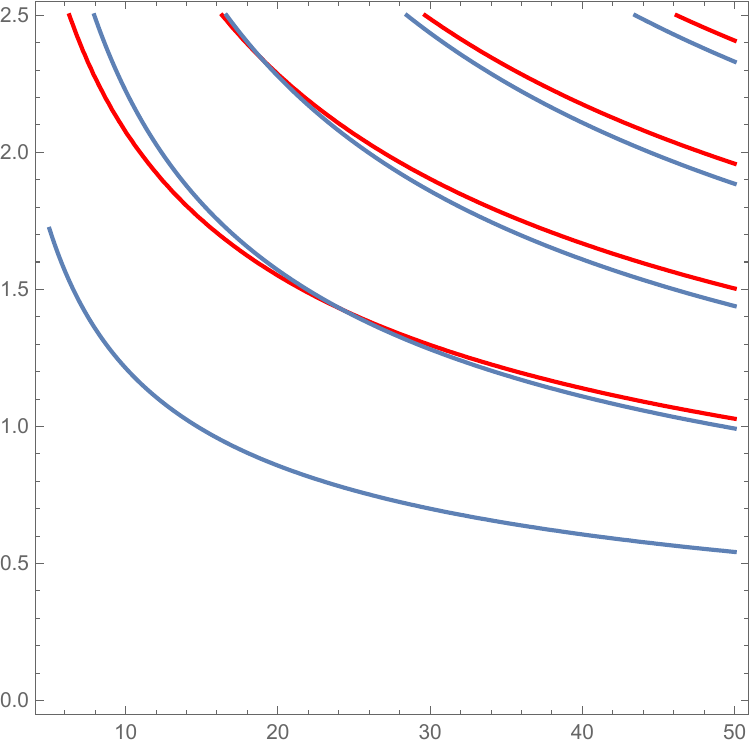}
        \caption{$\mathbb{H}^2$ with $\ell_{\text{Dir}} = 4$.}
        \label{fig:b}
    \end{subfigure}

    \vspace{0.5cm} 

    \begin{subfigure}[b]{0.48\textwidth}
        \centering
        \includegraphics[width=\linewidth]{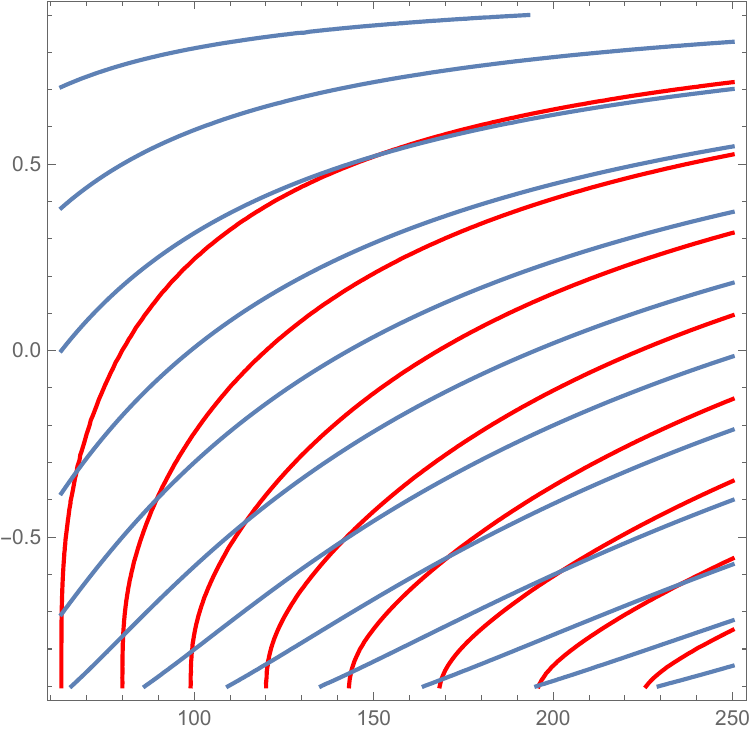}
        \caption{$\mathbb S^3$ with $\ell_{\text{Dir}} = 7$}
        \label{fig:c}
    \end{subfigure}
    \hfill 
    \begin{subfigure}[b]{0.48\textwidth}
        \centering
        \includegraphics[width=\linewidth]{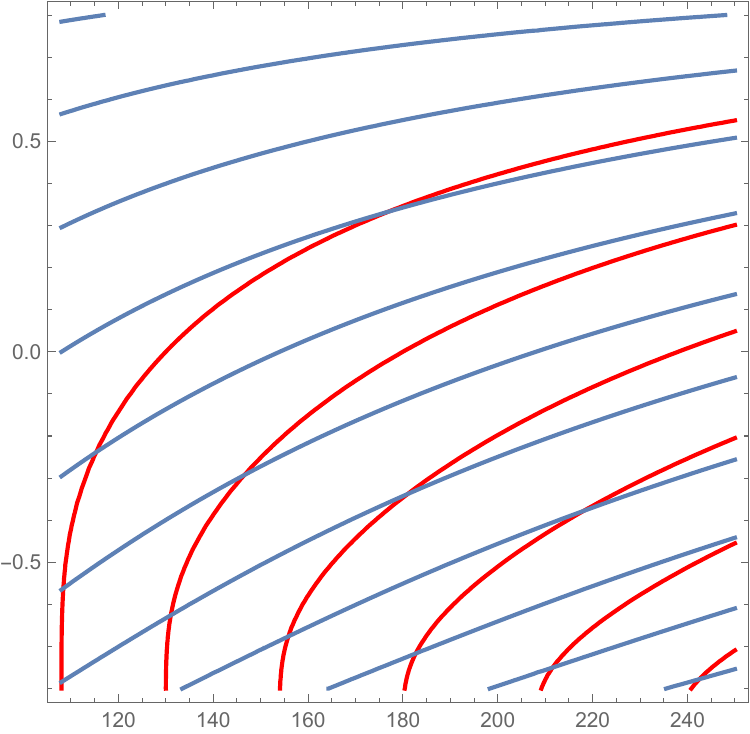}
        \caption{$\mathbb S^4$ with $\ell_{\text{Dir}} = 9$.}
        \label{fig:d}
    \end{subfigure}

    \caption{Crossings between zonal Neumann eigenvalues (red) and non-zonal Dirichlet ones (blue) with angular mode $\cos (\ell_{\text{Dir}} \phi )$, for different domains. The horizontal axis denotes the eigenvalue $\lambda$. The vertical one is the height a of geodesic ball centered around the north pole for all the unit sphere cases. In the case of Figure \ref{fig:b}, we use polar coordinates $(r, \varphi)$ on the hyperbolic plane so that $\Delta_{\mathbb H^2} = \frac{\partial^2}{\partial r^2} + \coth(r) \frac{\partial}{\partial r} - \frac{1}{\sinh (r)^2} \frac{\partial^2}{\partial \phi^2}$ and the vertical axis of Figure \ref{fig:b} denotes the radius $r$ of our geodesic ball centered at $0$.}
    \label{fig:appendix}
\end{figure}

\section*{Acknowledgments} 
This work was initiated during the workshops \textit{PDE in Barcelona}
at the Universtat de Barcelona, and \textit{Oberwolfach Workshop Partial Differential Equations} at the MFO. The authors would like to thank the organizers of the workshops for the kind invitations. Likewise, they would like to thank T. Weth for the enlightening discussions at the MFO. 

G.C.L. was supported by the Swiss State Secretariat for Education, Research and Innovation (SERI) under contract number MB22.00034 through the project TENSE. G.C.L. was also partially supported by the MICINN research grant number PID2021–125021NA–I00. A.J.F. is partially supported by the grants PID2023-149451NA-I00 of MCIN/AEI/10.13039/ 501100011033/FEDER, UE and Proyecto de Consolidaci\'on Investigadora 2022, CNS2022-135640, MICINN (Spain). 

\appendix
\section{The basics about isoparametric functions and hypersurfaces} \label{A.geometry} 

In this short appendix, for the benefit of the reader, we recall the notions of homogeneous and isoparametric hypersurfaces, and review some of their classical properties. We refer to \cite{S} and the references therein for a more general background. If one prefers so, they could consult the lecture notes \cite{DV}.

\begin{definition} \label{D.homogeneous}
 Let $(M,g)$ be a Riemannian manifold and let $S$ be a regular $(C^k,$ smooth or analytic) and connected hypersurface in $M$. $S$ is called a \textit{homogeneous hypersurface} in $M$ if it is an orbit of a subgroup $G \subset \textup{ISO}(M)$, where $\textup{ISO}(M)$ is the group of all isometries of $M$.
\end{definition}

\begin{remark}
    The principal curvatures of a homogeneous hypersurface are constant. However, the converse statement is not always true. See \cite[Page 532]{S} for a counterexample. 
\end{remark}
 
\begin{definition} \label{D.isoparametric}
 Let $(M,g)$ be a Riemannian manifold, let $U \subseteq M$ be an open connected subset, and let $f: U \to \R$ be a smooth function. We say that $f$ is an \textit{isoparametric function} on  $U$ if there exists smooth functions $h_1$ and $h_2$, defined on $f(U) \subset \R$, so that
 $$
 \Delta_g f = h_1 \circ f \quad \textup{and} \quad |\nabla f|^2 = h_2 \circ f\,.
  $$
 Regular level sets of $f$ are called \textit{isoparametric hypersurfaces} in $M$. 
\end{definition}

\begin{proposition}
     Let $(M,g)$ be a Riemannian manifold. If a hypersurface $S$ is homogeneous in $M$, then it is isoparametric in $M$.
\end{proposition}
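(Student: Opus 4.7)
My plan is to exhibit an isoparametric function $f$ on a tubular neighborhood of $S$ and then verify the two defining conditions directly from $G$-invariance. The natural candidate is the signed geodesic distance from $S$, which is automatically $G$-invariant once $S$ is realized as a single $G$-orbit.

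First, I would fix a tubular neighborhood $U$ of $S$ and set $f : U \to \R$ to be the (signed) geodesic distance to $S$, restricting to one side of $S$ if the normal bundle happens to be nontrivial. On such a $U$ the function $f$ is smooth, $S = f^{-1}(0)$ is a regular level set, and $|\nabla f|^2 \equiv 1$ by the standard eikonal property of distance functions. Hence the condition $|\nabla f|^2 = h_2\circ f$ holds immediately with $h_2\equiv 1$.

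Next, I would show that the level sets of $f$ coincide exactly with the $G$-orbits in $U$. Every $g\in G$ is an isometry with $g(S)=S$, so distances to $S$ are preserved and $f\circ g = f$. Conversely, since isometries send unit-speed geodesics to unit-speed geodesics, one has $g \circ \gamma_x = \gamma_{g(x)}$ for the normal geodesic $\gamma_x$ emanating from $x\in S$. Combining this identity with the transitivity of $G$ on $S$ yields transitivity of $G$ on each parallel hypersurface $\{f=c\}$, so that the $G$-orbits in $U$ are precisely the level sets of $f$.

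Finally, to obtain $\Delta_g f = h_1\circ f$, I would exploit the fact that $\Delta_g$ commutes with pullback by isometries: since $f\circ g = f$ for every $g\in G$, the function $\Delta_g f$ is itself $G$-invariant and therefore constant on $G$-orbits, hence constant on the level sets of $f$. This yields a well-defined function $h_1$ on $f(U)\subset\R$ with $\Delta_g f = h_1\circ f$, and $h_1$ is smooth because $f$ is a submersion (as $|\nabla f|\equiv 1$), so locally one may write $h_1(s) = (\Delta_g f)(\sigma(s))$ for a smooth section $\sigma$ of $f$. I expect the main obstacle to be purely technical bookkeeping around the tubular neighborhood — the orientability of the normal bundle of $S$, the size of $U$, and the smoothness of $h_1$ at the endpoints of its domain — rather than anything conceptual: the content of the proposition essentially reduces to the observation that a $G$-invariant smooth submersion whose level sets separate $G$-orbits automatically satisfies the two conditions of Definition \ref{D.isoparametric}.
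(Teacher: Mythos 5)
The paper does not actually prove this proposition; it is recorded in Appendix~\ref{A.geometry} as a classical fact, with the reader referred to \cite{S} and \cite{DV}. Your argument --- take $f$ to be the signed geodesic distance to $S$ on a tubular neighborhood, observe $|\nabla f|^2\equiv 1$ by the eikonal equation, show the level sets of $f$ are the $G$-orbits of the parallel hypersurfaces, and conclude $\Delta_g f$ is constant on level sets because $\Delta_g$ commutes with pullback by isometries --- is exactly the standard textbook proof, and it is correct whenever $S$ is co-orientable and $G$ preserves the co-orientation.

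Two points need tightening, one of which is a genuine flaw. First, the identity $g\circ\gamma_x=\gamma_{g(x)}$ for the normal geodesic requires $dg_x$ to carry the chosen unit normal $N_x$ to $N_{g(x)}$ rather than $-N_{g(x)}$; you silently assume $G$ preserves the co-orientation of $S$. This can be repaired: the co-orientation preserving subgroup $G_0\subset G$ has index at most $2$, so its orbits in $S$ are open (they have full dimension, as $\mathrm{Lie}(G_0)=\mathrm{Lie}(G)$), and since Definition~\ref{D.homogeneous} takes $S$ connected, $G_0$ is already transitive on $S$; one then runs your argument with $G_0$ in place of $G$. Second, your patch for a nontrivial normal bundle --- ``restrict to one side of $S$'' --- does not work as written: Definition~\ref{D.isoparametric} requires $f$ to be defined on an \emph{open} connected $U$, and restricting to one side places $S$ on the boundary of $U$, so $S$ is no longer a regular level set in the interior. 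Fixing this genuinely requires a different device (pass to a co-orientable double cover and descend, or use an even function of distance and argue separately near $S$). In the paper's applications $S=\partial\Omega$ bounds a domain and is therefore automatically co-orientable, so your proof as written suffices there; but the general statement in the Proposition, for an arbitrary connected hypersurface, does need the non-co-orientable case addressed.
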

 
\begin{theorem}[Cartan's Theorem] \label{T.Cartan}
    Let $(M,g)$ be a complete Riemannian manifold of constant sectional curvature. A hypersurface $S \subset M$ is isoparametric in $M$ if and only if all its principal curvatures are constant. 
\end{theorem}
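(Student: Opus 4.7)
The proof is driven by the geometry of parallel hypersurfaces in a space $(M,g)$ of constant sectional curvature $c$. Given a regular connected hypersurface $S$ with smooth unit normal $\nu$, consider the family $S_t := \exp_S(t\nu)$ and the signed distance function $d$ to $S$, which is smooth on a tubular neighborhood $U \supset S$. Differentiating the Weingarten map along a normal geodesic and using that the curvature tensor is $R(X,Y)Z = c(\langle Y,Z\rangle X - \langle X,Z\rangle Y)$, the shape operator $A(t)$ of $S_t$ satisfies the matrix Riccati equation $\dot A + A^2 + cI = 0$. Diagonalising in a parallel-transported basis, each principal curvature $\kappa$ of $S$ evolves by the decoupled scalar ODE $\dot\kappa = -\kappa^2 - c$, producing an explicit real-analytic dependence $\kappa(t) = F_c(\kappa(0),t)$ with Taylor series
\begin{equation*}
F_c(\kappa_0,t) = \kappa_0 - (\kappa_0^2 + c)\,t + (\kappa_0^3 + c\kappa_0)\,t^2 + O(t^3).
\end{equation*}

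\textbf{The easy direction.} If the principal curvatures $\kappa_1,\dots,\kappa_{n-1}$ of $S$ are constant, then by the scalar Riccati formula the principal curvatures of each $S_t$ are constant as well. Taking $f := d$ on $U$, the Gauss lemma gives $|\nabla f|^2 \equiv 1$, so the second isoparametric identity holds with $h_2 \equiv 1$; meanwhile $\Delta f\bigl|_{S_t} = \sum_i F_c(\kappa_i,t)$ depends only on $t = f$, yielding the first identity with $h_1(s) := \sum_i F_c(\kappa_i,s)$. Thus $f$ is isoparametric on $U$ and $S = f^{-1}(0)$ is a regular level set, so $S$ is isoparametric in $M$.

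\textbf{The converse.} Conversely, suppose $f: U \to \mathbb{R}$ is isoparametric and $S$ is a regular connected level set. The identity $|\nabla f|^2 = h_2\circ f$ implies that the integral curves of $\nabla f/|\nabla f|$ are unit-speed reparametrisations of geodesics orthogonal to every level set, so the level sets of $f$ are mutually parallel hypersurfaces. Reparametrising $f = \psi(d)$ with $\psi'\neq 0$, the relation $\Delta f = h_1\circ f$ becomes
\begin{equation*}
\psi''(d) + \psi'(d)\, H(p,d) = (h_1\circ\psi)(d),
\end{equation*}
where $H(p,d) = \mathrm{tr}\,A(p,d)$ is the mean curvature of the parallel hypersurface through $\exp_p(d\nu_p)$. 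Since the right-hand side depends only on $d$ and $\psi'\neq 0$, the mean curvature $H(p,d)$ is independent of the base point $p\in S$ for all admissible $d$.

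\textbf{Closing the argument and main obstacle.} By the Riccati formula, $H(p,d) = \sum_i F_c(\kappa_i(p),d)$ is real-analytic in $d$, and its Taylor coefficients at $d=0$ are, up to constants depending only on $c$, the power sums $\sum_i \kappa_i(p)^k$ for $k\geq 1$. Independence of $p$ for all $d$ forces every power sum to be constant on $S$, and Newton's identities then force every elementary symmetric polynomial in $\kappa_1(p),\dots,\kappa_{n-1}(p)$ to be constant; therefore the unordered multiset $\{\kappa_i(p)\}$ is independent of $p\in S$, as required. The principal difficulty is the decoupling of the matrix Riccati equation into $n-1$ scalar ones, which rests essentially on the constancy of the sectional curvature: in variable curvature the curvature operator mixes the eigenvalues of $A$ along normal geodesics, $H(p,d)$ is no longer a sum of independently evolving scalars, and the Taylor-coefficient extraction above breaks down, explaining why Cartan's theorem is confined to the constant-curvature setting.
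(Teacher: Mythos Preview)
The paper does not prove Cartan's Theorem: it is stated without proof in Appendix~\ref{A.geometry} as a classical background result, with references to \cite{S} and \cite{DV}. There is therefore no ``paper's own proof'' to compare against.

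Your argument is correct and follows the standard Cartan-style proof. A couple of small remarks: (i) the phrase ``up to constants depending only on~$c$, the power sums'' is slightly imprecise, since the $k$-th Taylor coefficient of $H(p,d)$ at $d=0$ is a polynomial in the power sums $p_1,\dots,p_{k+1}$ with leading term a nonzero multiple of $p_{k+1}$; but your inductive extraction of each $p_{k+1}$ from the constancy of the previous ones is exactly what makes this work, so the logic is fine. (ii) The decoupling of the matrix Riccati equation relies on the fact that in a parallel frame the equation $\dot A+A^2+cI=0$ preserves diagonality; you use this implicitly and it is worth stating, since it is precisely here that constant curvature enters. With these clarifications the proof is complete and is essentially the classical one.
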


\bibliographystyle{siam}

\vspace{0.7cm}

\noindent \textbf{Gonzalo Cao-Labora}  \smallbreak

Institute of Mathematics, EPFL, Station 8, 1015 Lausanne VD, Switzerland

\textit{Email address \rm{:}} \texttt{gonzalo.caolabora@epfl.ch}

\bigbreak

\noindent \textbf{Antonio J. Fern\'andez}  \smallbreak

Departamento de Matem\'aticas, Universidad Aut\'onoma de Madrid, 28049 Madrid, Spain

\textit{Email address \rm{:}} \texttt{antonioj.fernandez@uam.es}

\end{document}